\documentclass[12pt]{amsart}
\usepackage{times}
\usepackage{anysize}
\marginsize{2.7cm}{2.44cm}{2.0cm}{3.0cm}
 \usepackage[dvips]{graphicx}
 \usepackage{wrapfig}
 \usepackage{amsmath}
 \usepackage{amsthm}
 \usepackage{amsfonts}
 \usepackage{amssymb}
 \usepackage{layout}
 \usepackage{verbatim}
 \usepackage{alltt}
\usepackage{yfonts}
\usepackage[T1]{fontenc}

\usepackage[all]{xy}
\usepackage{setspace}
\newtheorem*{ithm}{Theorem}
\newtheorem*{thma}{Theorem A}
\newtheorem*{thmb}{Theorem B}

\newtheorem*{claim}{Claim}

\newcommand{\LL}{\Lambda}
\newcommand{\QQ}{\mathbb{Q}}
\newcommand{\FF}{\mathcal{F}}

\newcommand{\lra}{\longrightarrow}
\newcommand{\ZZ}{\mathbb{Z}}

\newcommand{\oo}{\mathcal{O}}

\newcommand{\ra}{\rightarrow}

\newcommand{\be}{\begin{equation}}
\newcommand{\ee}{\end{equation}}

\newcommand{\al}{\mathcal{L}}

\newcommand{\ff}{\hbox{\frakfamily f}}

\newcommand{\FFc}{\mathcal{F}_{\textup{\lowercase{can}}}}

\numberwithin{equation}{section}
\newtheorem{thm}{Theorem}[section]
\newtheorem{lemma}[thm]{Lemma}
\newenvironment{define}{\par\medskip\noindent\refstepcounter{thm}
\bgroup{\hspace*{-0.15 cm}\bf{Definition}
\thethm.}\bgroup}{\egroup \egroup\par\medskip}\newtheorem{prop}[thm]{Proposition}
\newtheorem{cor}[thm]{Corollary}
\newenvironment{rem}{\par\medskip\noindent\refstepcounter{thm}
\bgroup{\hspace*{-0.15 cm}\bf{Remark} \thethm.}\bgroup}{\egroup
\egroup\par\medskip} \parskip 2pt

\begin{document}
\title{{H}\lowercase{eight pairings,} {E}\lowercase{xceptional zeros and} {R}\lowercase{ubin's formula}: {T}\lowercase{he} {M}\lowercase{ultiplicative group}}

\author{K\^az\i m B\"uy\"ukboduk}

\address{Kazim Buyukboduk  \hfill\break\indent
Max Planck Institut f\"ur Mathematik
 \hfill\break\indent Vivatsgasse 7,
Bonn 53111
\hfill\break\indent DEUTSCHLAND} \email{{\tt kazim@mpim-bonn.mpg.de}\hfill\break\indent {\it
Web page:} {\tt http://guests.mpim-bonn.mpg.de/$\sim$kazim}}
\curraddr{\hfill\break\indent Ko\c{c} University,  Mathematics \hfill\break\indent Rumeli Feneri Yolu \hfill\break\indent 34450 Sar\i yer/\.Istanbul
\hfill\break\indent TURKEY}
\keywords{Exceptional Zeros, Cyclotomic Units, Height Pairings, $p$-adic $L$-functions}
\subjclass[2000]{Primary 11R23, 11R34, 11R42; Secondary 11G50}

\begin{abstract}
In this paper we prove a formula, much in the spirit of one due to Rubin, which expresses the leading coefficients of various $p$-adic $L$-functions in the presence of an exceptional zero in terms of Nekov\'{a}\v{r}'s $p$-adic height pairings on his extended Selmer groups. In a particular case, the Rubin-style formula we prove recovers a $p$-adic Kronecker limit formula. 
In a disjoint case, we observe that our computations with Nekov\'a\v{r}'s heights agree with the Ferrero-Greenberg formula (more generally, Gross' conjectural formula) for the leading coefficient of the Kubota-Leopoldt $p$-adic $L$-function (resp., the Deligne-Ribet $p$-adic $L$-function) at $s=0$.
\end{abstract}

\maketitle
\tableofcontents
\section{Introduction}

The celebrated formula of Gross and Zagier~\cite{gz} expresses the first derivative at $s=1$ of a Rankin $L$-series of a modular form $f$ of weight 2 on $\Gamma_0(N)$ in terms of the N\'eron-Tate height of a Heegner point on the $f$-quotient $A_f$ of the Jacobian $J_0(N)$ of the modular curve $X_0(N)$. A $p$-adic variant of this formula has been proved by Perrin Riou~\cite{pr87}, relating the $p$-adic height of a Heegner point on $A_f$ to a first derivative (taken in the cyclotomic direction) of a \emph{two-variable} $p$-adic $L$-function associated to $f$. (See also \cite{howard:GZ} for a generalization of this formula with more Iwasawa theoretical flavor). Later, Nekov\'{a}\v{r}~\cite{nek95} extended the results of~\cite{pr87} to higher weight modular forms, where he utilized his $p$-adic heights defined earlier in~\cite{nek2}.

When $E$ is an elliptic curve defined over $\QQ$ with CM and $p$ is an odd prime at which $E$ has good, ordinary reduction, Perrin-Riou~\cite{pr83} gives a purely algebraic construction of the canonical $p$-adic height pairing on the $p$-adic Selmer group $S_p(E/\QQ)$. If further  $L(E/\QQ,1)=0$, Rubin~\cite{ru:rational92} obtains a formula for the special values of the associated Katz two-variable $p$-adic $L$-function in terms of  the $p$-adic height of an element $x_p \in S_p(E)$ (which is constructed from elliptic units). When $E$ does not have CM, but still good, ordinary at $p$, results along this line have been obtained by Perrin-Riou~\cite{pr93} utilizing Nekov\'a{\v r}'s definition of $p$-adic heights~\cite{nek2} and Kato's zeta-elements~\cite{kato}. Perrin-Riou's formula in~\cite{pr93} goes hand-in-hand with Rubin's result~\cite[Theorem 1]{ru94} (which follows from Theorem 3.2 of {loc.cit.}; this is the version of \emph{Rubin's formula} we refer to in the abstract). Rubin uses in~\cite{ru94} the definition of~\cite{pr:ht} for $p$-adic height pairings. We finally note that Rubin's formula \cite[Theorem 3.2]{ru94} has been generalized by Howard \cite[Theorem 3.4]{howard:heights} for abelian varieties (resp., by Nekov\'a\v{r}~\cite[\S11.5.10]{nek} for general motives) whose $L$-functions vanish to higher order. We provide an overview of Rubin's formula  since it is one of the main motivations for the results of the current paper.       

Suppose $E_{/\QQ}$ is an elliptic curve which has good, ordinary reduction at $p$. Let $\QQ_\infty$ be the unique $\ZZ_p$-extension of $\QQ$, and for every $n$, let $\QQ_n$ be the unique sub-extension of $\QQ$ of degree $p^n$. Put $\Phi_n=\QQ_n\otimes\QQ_p$ and $\Phi_\infty=\cup\Phi_n$. Let $T_p(E)$ denote the $p$-adic Tate module of $E$, and suppose we are given a sequence of cohomology classes $\mathbf{z}=\{z_n\}\in \varprojlim H^1(\QQ_n,T_p(E))$. Using local Tate cup-product pairing, one obtains an element $f_{\mathbf{z}}\in \textup{Hom}(E(\Phi_\infty),\ZZ_p)$; see equation (5) of~\cite{ru94}. The following is Theorem 3.2(i) of {loc.cit.}:
\begin{ithm}[Rubin]
Let $S_{p}(E/\QQ)$ denote the $p$-adic Selmer group of $E_{\QQ}$ over $\QQ$. Then $z_0\in S_{p}(E/\QQ)$ if and only if $f_{\mathbf{z}}(E(\QQ_p))=0$.       
\end{ithm}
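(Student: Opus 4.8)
The plan is to reduce the global criterion to a single local condition at $p$ and then to conclude by local Tate duality. First I would use the norm-compatibility of $\mathbf z=\{z_n\}$ to dispose of the Selmer conditions away from $p$. Since $\QQ_\infty/\QQ$ is unramified at every prime $\ell\neq p$, the completion of $\QQ_\infty$ at $\ell$ is the unramified $\ZZ_p$-extension of $\QQ_\ell$; a standard computation shows that the singular quotient $H^1(\QQ_\ell,T_p(E))/H^1_f(\QQ_\ell,T_p(E))$ is finite (it embeds, up to the usual bad-reduction modification, into $\big(T_p(E)(-1)\big)^{\mathrm{Fr}_\ell}$, and $1-\mathrm{Fr}_\ell$ acts invertibly on $V_p(E)(-1)$ for $\ell\neq p$), so the corestriction maps down the cyclotomic tower annihilate it in the limit. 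Hence $\mathrm{loc}_\ell(z_0)$, being a universal norm, lies in $H^1_f(\QQ_\ell,T_p(E))$ for every $\ell\neq p$. Consequently $z_0\in S_p(E/\QQ)$ if and only if $\mathrm{loc}_p(z_0)$ lies in the local condition defining $S_p(E/\QQ)$ at $p$, namely (as $E$ is ordinary at $p$) the image of the Kummer map $E(\QQ_p)^\wedge\hookrightarrow H^1(\QQ_p,T_p(E))$, which I will denote $H^1_f(\QQ_p,T_p(E))$.

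Next I would rewrite $f_{\mathbf z}$, restricted to $E(\QQ_p)=E(\Phi_0)$, in terms of the bottom class alone. For $Q\in E(\QQ_p)$ with Kummer image $\kappa_0(Q)$, the defining expression of~\cite[(5)]{ru94} at level $n$ involves $\langle \mathrm{loc}_p(z_n),\kappa_n(Q)\rangle_{\Phi_n}$; using $\kappa_n(Q)=\mathrm{res}_{\Phi_n/\Phi_0}\kappa_0(Q)$, the projection formula $\langle \mathrm{cor}\,a,b\rangle_{\Phi_0}=\langle a,\mathrm{res}\,b\rangle_{\Phi_n}$ for the local cup-product pairing (compatibility of the invariant maps), and $\mathrm{cor}_{\Phi_n/\Phi_0}\mathrm{loc}_p(z_n)=\mathrm{loc}_p(z_0)$, this collapses to $f_{\mathbf z}(Q)=\langle \mathrm{loc}_p(z_0),\kappa_0(Q)\rangle_{\QQ_p}\in H^2(\QQ_p,\ZZ_p(1))\cong\ZZ_p$. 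Therefore $f_{\mathbf z}(E(\QQ_p))=0$ if and only if $\mathrm{loc}_p(z_0)$ is orthogonal to the image of the Kummer map, i.e. to $H^1_f(\QQ_p,T_p(E))$, under the local Tate pairing.

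Finally I would invoke local duality: the Weil pairing $T_p(E)\otimes T_p(E)\ra\ZZ_p(1)$ makes $E$ self-dual, and under the resulting perfect pairing on $H^1(\QQ_p,-)$ the subgroup $H^1_f(\QQ_p,T_p(E))$ is exactly its own annihilator (Tate local duality together with the standard orthogonality $H^1_f\perp H^1_f$ over $V_p(E)$, combined with $H^1_f(T)=H^1_f(V)\cap H^1(T)$ and the fact that the $\ZZ_p$-valued pairing vanishes precisely when the $\QQ_p$-valued one does). Hence $\mathrm{loc}_p(z_0)\perp H^1_f(\QQ_p,T_p(E))$ if and only if $\mathrm{loc}_p(z_0)\in H^1_f(\QQ_p,T_p(E))$. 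Chaining the three steps yields $z_0\in S_p(E/\QQ)\iff \mathrm{loc}_p(z_0)\in H^1_f(\QQ_p,T_p(E))\iff f_{\mathbf z}(E(\QQ_p))=0$.

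The step I expect to cost the most work is the reduction in the first paragraph: one must verify that universal norms along the cyclotomic tower automatically satisfy the correct local conditions at all $\ell\neq p$, including the primes of bad reduction, and one must pin down exactly which Selmer structure~\cite{ru94} uses. A secondary delicate point is the \emph{integral} self-duality of $H^1_f$ at $p$ — that the Kummer image is $p$-saturated inside its own annihilator — which uses in an essential way that $E$ has good ordinary reduction at $p$. Everything else is formal manipulation of cup products and corestriction.
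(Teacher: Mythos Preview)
The paper does not supply its own proof of this statement: it is quoted in the introduction as Theorem~3.2(i) of Rubin's paper~\cite{ru94}, purely to motivate the Rubin-style formula that follows, and the body of the paper never returns to it. So there is no in-paper argument to compare your proposal against.

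That said, your proposal is correct and is the natural route (and essentially Rubin's own): kill the Selmer conditions at $\ell\neq p$ by the universal-norm argument, collapse $f_{\mathbf z}\big|_{E(\QQ_p)}$ to $\langle \mathrm{loc}_p(z_0),\,\kappa_0(-)\rangle_{\QQ_p}$ via the projection formula for the local cup product, and finish with the self-orthogonality of the Kummer image under local Tate duality. One minor comment on your closing caveat: the fact that $E(\QQ_p)^\wedge$ is \emph{exactly} its own annihilator inside $H^1(\QQ_p,T_p(E))$ is Tate's local duality for abelian varieties and does not require the ordinary hypothesis; ordinariness enters in~\cite{ru94} elsewhere (in the construction of the height pairing and the Greenberg-type description of the local condition), not in this particular equivalence.
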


When $f_{\mathbf{z}}(E(\QQ_p))=0$, Rubin constructs in \cite[\S3]{ru94} a \emph{derivative} $\textup{Der}_\rho(f_{\mathbf{z}})$  of $f_{\mathbf{z}}$ along $\rho$, 
where $\rho$ is any nonzero homomorphism $\textup{Gal}(\QQ_\infty/\QQ)\lra\ZZ_p$. See also the remarks preceding Theorem 3.2 and Proposition 7.1 of \cite{ru94}. Rubin's formula can be stated as follows:
\begin{ithm}[Rubin] Suppose $z_0 \in S_p(E/\QQ)\subset H^1(\QQ,T_p(E))$. Then for every $x \in E(\QQ)\otimes\ZZ_p$,
 $$\langle z_0,x\rangle_\rho=\textup{Der}_\rho(f_{\mathbf{z}})(x),$$
  where $\langle\,,\,\rangle_\rho$ is the $p$-adic height pairing.
\end{ithm}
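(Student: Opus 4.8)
The plan is to express both sides as $\bar\rho$ of one and the same element of $I/I^2$, where $\Lambda=\ZZ_p[[\textup{Gal}(\QQ_\infty/\QQ)]]$, $I\subset\Lambda$ is the augmentation ideal, and $\bar\rho\colon I/I^2\to\ZZ_p$ is induced by $\rho$ through the canonical identification $I/I^2\cong\textup{Gal}(\QQ_\infty/\QQ)$. First I would reinterpret the data: by Shapiro's lemma the norm-compatible system $\mathbf{z}=\{z_n\}$ is a class $\mathbf{z}\in H^1(\QQ,T_p(E)\otimes\Lambda^{\iota})$ whose evaluation at $n=0$ is $z_0$, and reducing modulo $I^2$ produces a \emph{global} class $\tilde z\in H^1(\QQ,T_p(E)\otimes\Lambda/I^2)$ lifting $z_0$. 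Cup-producting $\textup{loc}_p(\tilde z)$ against the Kummer image $\kappa_p(x)\in H^1_f(\QQ_p,T_p(E))$ of $x$ — via the Weil pairing $T_p(E)\otimes T_p(E)\to\ZZ_p(1)$ with $\Lambda/I^2$-coefficients and the local invariant — gives an element of $\Lambda/I^2$ whose image in $\Lambda/I\cong\ZZ_p$ is $\langle\textup{loc}_p(z_0),\kappa_p(x)\rangle_p$; this vanishes, because $z_0\in S_p(E/\QQ)$ forces $\textup{loc}_p(z_0)\in H^1_f(\QQ_p,T_p(E))$, which is its own exact annihilator under local Tate duality (here one uses the self-duality $T_p(E)\cong T_p(E)^*(1)$ coming from the Weil pairing). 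So this element lies in $I/I^2$, and unwinding Rubin's construction in \cite[\S3]{ru94} — $f_{\mathbf{z}}$ is the $n=0$ specialization of the $\Lambda$-adic local pairing $\langle\textup{loc}_p(\mathbf{z}),-\rangle$, and $\textup{Der}_\rho(f_{\mathbf{z}})$ is the composite of its first-order part with $\bar\rho$ — one obtains $\textup{Der}_\rho(f_{\mathbf{z}})(x)=\bar\rho\big(\langle\textup{loc}_p(\tilde z),\kappa_p(x)\rangle_p\big)$.

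On the other side I would invoke Perrin-Riou's construction of $\langle\,,\,\rangle_\rho$ in \cite{pr:ht} and specialize it to a universal norm $z_0$ with chosen extension $\mathbf{z}$. That construction lifts $z_0$ to level $\Lambda/I^2$ and measures, place by place, the failure of the lift to satisfy the local conditions defining $S_p$; for a universal norm the lift may be taken to be the Iwasawa-theoretic $\tilde z$, and comparing with the lift that is genuinely Selmer away from $p$ (whose localizations at $v\neq p$ pair trivially with the global Selmer class $\kappa(x)$, the local conditions being self-orthogonal) one finds, via global (Poitou--Tate) reciprocity applied to the cup product of the global class $\kappa(x)$ with a global class in $H^1(\QQ,T_p(E))\otimes\textup{Gal}(\QQ_\infty/\QQ)$,
\[
\langle z_0,x\rangle_\rho-\textup{Der}_\rho(f_{\mathbf{z}})(x)=\bar\rho\Big(\textstyle\sum_{v\neq p}\langle\textup{loc}_v(\tilde z),\textup{loc}_v(\kappa(x))\rangle_v\Big).
\]
It remains to see that this correction vanishes. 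Each summand lies in $I/I^2$, since modulo $I$ it equals $\langle\textup{loc}_v(z_0),\textup{loc}_v(\kappa(x))\rangle_v=0$ (both classes are Selmer at $v$ and the local condition is self-orthogonal). The archimedean term vanishes because $p$ is odd; a good prime $v\neq p$ contributes nothing because $T_p(E)$ and $\QQ_\infty/\QQ$ are unramified at $v$ and the unramified local condition is self-orthogonal; and a bad prime $v\neq p$ contributes nothing because $\textup{loc}_v(z_0)$ is a local universal norm along the unramified $\ZZ_p$-extension at $v$, which together with the first-order part of $\textup{loc}_v(\tilde z)$ is confined to a submodule on which the local pairing with $\kappa_v(x)$ is trivial. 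Combining with the first paragraph, $\langle z_0,x\rangle_\rho=\textup{Der}_\rho(f_{\mathbf{z}})(x)$, the sign being fixed by the normalizations of \cite{pr:ht}.

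The main obstacle is precisely this vanishing of the away-from-$p$ correction. Since $\mathbf{z}$ is assumed only to be a norm-compatible family of \emph{cohomology} classes — not of Selmer classes — $\tilde z$ can genuinely fail to be locally Selmer at primes $v\neq p$, and one must show such failures do not survive in the pairing with $\kappa(x)$; in the bad-reduction case this rests on a finiteness statement for the local universal norms along the unramified $\ZZ_p$-extension, which pins $\textup{loc}_v(z_0)$ and the relevant part of $\textup{loc}_v(\tilde z)$ inside the annihilator of the Kummer image. A second point requiring care is at $p$ itself: one needs that the ordinary local condition used in \cite{pr:ht} deforms cyclotomically so that its first-order obstruction is exactly the local Tate pairing at $p$ that enters $\textup{Der}_\rho(f_{\mathbf{z}})$. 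This is where the good, ordinary hypothesis is used, through the filtration $0\to T^+\to T_p(E)\to T^-\to 0$ and the fact that $T^+$ is isotropic for the Weil pairing, so that the cup product at $p$ factors through the quotient $T^-$ and matches the defect. Granting these two local compatibilities, the theorem follows from the chain of identifications above.
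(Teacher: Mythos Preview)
The paper does not prove this theorem. It is stated in the introduction as a result of Rubin, attributed to \cite[Theorem~3.2]{ru94}, and serves only as motivation for the paper's own ``Rubin-style formula'' (Theorem~\ref{main-primitive} and Corollary~\ref{main}), which concerns the multiplicative group twisted by a Dirichlet character rather than an elliptic curve. There is therefore nothing in the paper to compare your proposal against.

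That said, your outline is a faithful sketch of the strategy behind Rubin's original argument and its Perrin-Riou antecedent: lift to $\Lambda/I^2$, identify both sides with $\bar\rho$ of the same local cup-product at $p$, and kill the contributions at $v\neq p$ by global reciprocity. The paper's analogous proof (for $T=\oo(1)\otimes\chi^{-1}$) proceeds somewhat differently: it computes Nekov\'a\v{r}'s height via an explicit Bockstein calculation (Proposition~\ref{prop:bockstein calculation}, borrowed from \cite{BG} and Solomon's $p$-units), reduces the global pairing to a single local Tate pairing at $p$ because $H^2(\QQ_\ell,T)=0$ for $\ell\mid\ff_\chi$, and then matches this with $\mathcal{L}_\xi'(\phi_\alpha)$ via explicit local class field theory and the limit argument of Lemma~\ref{lem3.1} (which the paper itself flags as ``essentially borrowed from~\cite{ru94}''). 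So the paper's argument in its own setting is more direct---the away-from-$p$ vanishing is immediate from the shape of $T$---and does not confront the genuine obstacle you correctly identify, namely controlling $\textup{loc}_v(\tilde z)$ at bad primes when $\mathbf{z}$ is only a family of cohomology classes. Your treatment of that point is still somewhat imprecise (``confined to a submodule on which the local pairing with $\kappa_v(x)$ is trivial'' needs a concrete finiteness or divisibility statement), but for a comparison with this paper the issue is moot: the theorem you are proving is simply not proved here.
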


This formula should be compared to our formula stated in Theorem~\ref{main-primitive}. Having spelled out the first link between our work and results mentioned above, let us describe our results in greater detail.

In~\cite{nek}, Nekov\'{a}\v{r} defines extended Selmer groups associated to (ordinary) Galois representations, which are strictly larger than the classical Selmer groups in the presence of an exceptional zero (in the sense of~\cite{gr}). He also defines $p$-adic height pairings on his extended Selmer groups. One natural question is what portion of the results above may be transferred to this new setting when an exceptional zero is present. We tackle this problem in the simplest and the most classical setting: Fixing a number field $K$, the Galois representation in consideration is $T=\oo(1)\otimes\chi^{-1}$. Here, $\oo$ is the ring of integers of a finite extension $\frak{F}$ of $\QQ_p$ and $\oo(1)=\oo\otimes_{\ZZ_p}\ZZ_p(1)$, where $\ZZ_p(1)=T_p(\mathbb{G}_m)$ is as usual the $p$-adic Tate module of the multiplicative group, and $\chi:\textup{Gal}(\overline{K}/K)\ra\oo^\times$ is a non-trivial Dirichlet character with the property that $\chi(\wp)=1$ for a prime $\wp$ of $K$ lying above $p$. The Rubin-style formula we prove here (Corollary~\ref{main}) is akin to \cite[Theorem~1]{ru94}. Before we state it, we introduce the necessary notation.

Suppose in this introduction that $K=\QQ$ and $\chi$ is an even Dirichlet character. See \S\ref{subsec:htQodd} below for the case when $K$ is a general totally real number field but $\chi$ is totally odd, and \S\ref{subsec:caseimagquad} when the base field $K$ is totally imaginary. Let $L$ be the field cut by the Dirichlet character $\chi$, i.e., the fixed field of $\ker(\chi)$. Let $c_1^\chi \in \widetilde{H}^1_f(\QQ,T)$ be \emph{tame} cyclotomic unit inside of $L$ defined as in~\cite[\S6.1]{mr02}, see also \S\ref{sec:cyclo} below for a recap. Here (and below) $\widetilde{H}^1_f(K,T)$ stands for the extended Selmer groups of Nekov\'{a}\v{r}; for an overview (and explicit calculations specific to our case of interest, including a description of how we view the cyclotomic units as elements of the extended Selmer groups) see~\S\ref{subsec:tilde} and \S\ref{sec:cyclo} below. Set $T^*=\textup{Hom}(T,\oo(1))=\oo(\chi)$. Let $\langle\,,\,\rangle_{\textup{Nek}}$ denote Nekov\'{a}\v{r}'s $p$-adic height pairing, see~\cite[\S11]{nek} for a general definition, and also~\S\ref{sec:hts} below for the portion of the theory that concerns us. Attached to an arbitrary element $\alpha \in \widetilde{H}^1_f(\QQ,T^*)$ and the collection of cyclotomic units $\xi$ along the cyclotomic $\ZZ_p$-tower, we construct a `$p$-adic $L$-function' $L_{\xi,\Phi}$ in \S\ref{sec:main} below. The Rubin-style formula we prove reads as follows:

\begin{thma}[\textup{Corollary~\ref{main}} below]\,\,$\langle c_1^\chi, \alpha\rangle_{\textup{Nek}}=L^{\prime}_{\xi,\Phi}(\pmb{1}).$
\end{thma}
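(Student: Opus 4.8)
The plan is to reduce Theorem~A to a comparison of two explicit descriptions of the same object, the cyclotomic unit Euler system along the $\ZZ_p$-tower, and to track how each description interacts with the relevant pairing. First I would recall the definition of the $p$-adic $L$-function $L_{\xi,\Phi}$ from \S\ref{sec:main}: attached to $\alpha\in\widetilde H^1_f(\QQ,T^*)$ and the norm-compatible system of cyclotomic units $\xi=\{\xi_n\}$, it is built by pairing the Iwasawa-theoretic class $\varprojlim_n\xi_n\in\varprojlim_n H^1(\QQ_n,T)$ against $\alpha$ via the (Iwasawa-theoretic, local-at-$p$) cup product, exactly in the spirit of equation (5) of~\cite{ru94} and of the construction of $f_{\mathbf z}$ in the introduction. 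The key structural input is that $\chi(\wp)=1$, so that $T$ has an exceptional zero in Greenberg's sense and the class $c_0^\chi$ genuinely lives in the \emph{extended} Selmer group $\widetilde H^1_f(\QQ,T)$ but maps to $0$ in the classical $H^1_f$; this is precisely the analogue of the hypothesis $f_{\mathbf z}(E(\QQ_p))=0$ that licenses forming a derivative.

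Next I would unwind Nekov\'a\v r's height pairing $\langle\,,\,\rangle_{\textup{Nek}}$ on $\widetilde H^1_f(\QQ,T)\times\widetilde H^1_f(\QQ,T^*)$ using the explicit description from \S\ref{sec:hts}: the height of $c_0^\chi$ against $\alpha$ is computed by lifting $c_0^\chi$ to a class over the first layer (or, better, to the Iwasawa module over $\QQ_\infty$), applying the connecting/derivative map coming from the exact sequence $0\to T\to T\otimes\LL\to T\otimes\LL\to 0$ attached to the cyclotomic deformation, and then pairing with $\alpha$ via the local cup product at $\wp$. The point — and this is where the bulk of the work lies — is to show that Nekov\'a\v r's height-pairing recipe, specialized to $c_0^\chi$ and $\xi$, \emph{is} the derivative construction underlying $L'_{\xi,\Phi}(\pmb 1)$: both are, up to sign and normalization, the image of $\varprojlim\xi_n$ under the composite of the Coleman/derivative map with pairing against $\alpha$, evaluated at the augmentation. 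Concretely I would (i) identify the local term: because $\chi(\wp)=1$, the local condition at $\wp$ degenerates, and the ``extended'' part of $\widetilde H^1_f$ is exactly $H^1(\QQ_p,T)/H^1_f$, on which Nekov\'a\v r's height is computed by an honest local duality pairing with a derived logarithm; (ii) match this with the derivative $\textup{Der}_\rho$-type operation defining $L'_{\xi,\Phi}(\pmb 1)$, using that the Iwasawa class $\varprojlim\xi_n$ is norm-compatible so its image under $\gamma-1$ (for a topological generator $\gamma$) computes the first-order term; and (iii) verify the global compatibility, i.e. that the contributions away from $p$ and away from $\wp$ cancel, which follows from the global reciprocity law / the fact that $c_0^\chi$ is globally a class (an Euler system) so the sum of local invariants vanishes.

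I expect the main obstacle to be step (i)--(ii): reconciling Nekov\'a\v r's intrinsic, homological definition of the height pairing on extended Selmer groups (via the Bockstein map for the cyclotomic deformation and the mapping-fibre formalism of~\cite{nek}) with the hands-on ``derivative of a $p$-adic $L$-function'' definition of $L'_{\xi,\Phi}(\pmb 1)$, since the former is defined up to choices (splittings, liftings) that must be shown not to matter, and the sign/normalization bookkeeping between the two conventions is delicate — this is the analogue of the matching that Rubin carries out in~\cite[\S3, \S7]{ru94}, but now one level up, on extended Selmer groups, with the exceptional-zero local term playing the role that the ``extra'' copy of the unit group plays in Rubin's setting. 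Once that identification is in place, the theorem follows formally: plug the explicit cyclotomic unit $c_0^\chi$ into the matched formula, observe that its height against $\alpha$ is computed by exactly the derivative appearing on the right, and conclude $\langle c_0^\chi,\alpha\rangle_{\textup{Nek}}=L'_{\xi,\Phi}(\pmb 1)$. A secondary, more technical point will be checking that the tameness of $c_0^\chi$ (the choice of~\cite[\S6.1]{mr02}) is compatible with the Iwasawa-theoretic normalization of $\xi$ used to define $L_{\xi,\Phi}$, so that no spurious Euler factor at the auxiliary tame prime enters; I would dispose of this by the standard distribution relation for cyclotomic units.
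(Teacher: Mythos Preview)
Your proposal contains a genuine conceptual error that would derail the argument. You assert that ``$c_0^\chi$ genuinely lives in the extended Selmer group $\widetilde H^1_f(\QQ,T)$ but maps to $0$ in the classical $H^1_f$''. This is false: since $T^+=T$ and hence $T^-=0$, Proposition~\ref{prop:compare} (or Lemma~\ref{lem:comparetilde}(i)) gives $\widetilde H^1_f(\QQ,T)\stackrel{\sim}{\to}H^1_{\FFc}(\QQ,T)$, so there is no extra piece on the $T$-side, and $c_0^\chi$ is an honest nonzero $p$-unit in the classical Selmer group. The exceptional zero shows up on the \emph{dual} side: it is $\widetilde H^1_f(\QQ,T^*)$ that is enlarged, isomorphic to $H^0(\QQ_p,\oo(\chi))$, and $\alpha$ lives entirely in this extra piece. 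Consequently your proposed mechanism --- lifting $c_0^\chi$ along the deformation and reading off a derivative --- is misaligned with what actually happens.

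Relatedly, you conflate the tame cyclotomic unit $c_0^\chi$ with the wild system $\xi=\{\xi_{L_n}^\chi\}$. These are different objects: the image of $\xi$ at the bottom is $\xi_L^\chi=1$ (Lemma~\ref{lem:vanishing}), which is the genuine analogue of Rubin's hypothesis $f_{\mathbf z}(E(\QQ_p))=0$, and it is this vanishing that licenses dividing $\xi$ by $(\gamma-1)/\log_p\rho_{\textup{cyc}}(\gamma)$ to produce the wild $p$-unit system $z_\infty^\chi$ with bottom class $z_0^\chi$ (Solomon's cyclotomic $p$-unit). The paper's proof then proceeds in two independent computations meeting at the number $v_0(z_0^\chi)\cdot\alpha(v_0)$: first, the Bockstein of $c_0^\chi$ is computed explicitly via \cite[Proposition~9.3(ii)]{BG} and Solomon's main result to give $\beta_\chi(c_0^\chi)=\log_p(\iota_p(c_0^\chi))=v_0(z_0^\chi)$, whence $\langle c_0^\chi,\alpha\rangle_{\textup{Nek}}=v_0(z_0^\chi)\cdot\alpha(v_0)$; second, $\mathcal L_\xi'(\phi_\alpha)$ is computed by local class field theory (via $z_0^\chi$ again) to be the same quantity; and only then is $\mathcal L_\xi'(\phi_\alpha)$ identified with $L'_{\xi,\Phi}(\pmb 1)$ by an argument parallel to \cite[\S3,\S7]{ru94}. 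Your sketch omits the Solomon/$z_0^\chi$ intermediary and the external input from \cite{BG}, without which the Bockstein computation has no content.
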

Here, $\pmb{1}$ is the trivial character and $L_{\xi,\Phi}^{\prime}$ is the derivative of $L_{\xi,\Phi}$ \emph{along the cyclotomic character}, see \S\ref{sec:main} for details. Using Coleman's map, one may choose a particular $\Phi$ and $\alpha$, and apply Theorem A above to prove:
\begin{thmb}[\textup{Theorem~\ref{main:K-L}} below]
$\langle c_1^\chi, \frak{col}_0^\chi \rangle_{\textup{Nek}}=\widetilde{L}_p^\prime(1,\chi).$
\end{thmb}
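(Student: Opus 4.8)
The plan is to obtain Theorem~\ref{main:K-L} as a direct specialization of Theorem~A (Corollary~\ref{main}). The hard analytic input --- the Rubin-style identity $\langle c_0^\chi,\alpha\rangle_{\textup{Nek}}=L^\prime_{\xi,\Phi}(\pmb{1})$ --- is already in hand, so what remains is to choose the data $\alpha$ and $\Phi$, which by Coleman's map amount to a single choice, so that the auxiliary $p$-adic $L$-function $L_{\xi,\Phi}$ of \S\ref{sec:main} becomes the Kubota-Leopoldt $p$-adic $L$-function $\widetilde{L}_p(\cdot,\chi)$. Concretely, I would first recall the $\chi$-component of Coleman's map on the module of norm-coherent systems of local units at $\wp$ along the cyclotomic tower, together with Coleman's interpolation formula (equivalently the Coates-Wiles/Coleman explicit reciprocity law). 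Feeding the norm-coherent family of cyclotomic units $\xi$ into this map produces a distinguished datum $\Phi=\Phi_{\textup{col}}$ --- a homomorphism of the kind entering Theorem~A, in the spirit of Rubin's $f_{\mathbf{z}}$ --- and $\frak{col}_0^\chi$ is, essentially by definition, the class in $\widetilde{H}^1_f(\QQ,T^*)=\widetilde{H}^1_f(\QQ,\oo(\chi))$ that corresponds to $\Phi_{\textup{col}}$ via local Tate duality and Nekov\'{a}\v{r}'s description of the exceptional-zero local condition at $\wp$. One then checks that $\frak{col}_0^\chi$ genuinely lies in $\widetilde{H}^1_f(\QQ,T^*)$ --- it is unramified away from $p$, and the local condition at $\wp$ holds precisely because $\chi(\wp)=1$ --- and that it is non-zero, which follows from the non-degeneracy built into Coleman's map.

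With $(\alpha,\Phi)=(\frak{col}_0^\chi,\Phi_{\textup{col}})$ fixed, the next step is to identify $L_{\xi,\Phi}$, viewed as a function of the cyclotomic variable $s$ via $\gamma\mapsto\kappa(\gamma)^s$ on $\textup{Gal}(\QQ_\infty/\QQ)$, with $\widetilde{L}_p(\cdot,\chi)$. Both are $\Lambda$-adic objects determined by their values at finite-order twists, and the interpolation property of $L_{\xi,\Phi}$ coming from the construction in \S\ref{sec:main} matches that of $\widetilde{L}_p(\cdot,\chi)$ via Coleman's formula for the image of the cyclotomic units; hence the two coincide as elements of the Iwasawa algebra. (The explicit reciprocity law pins this image down exactly, not merely up to the characteristic ideal, and the Iwasawa Main Conjecture of Mazur-Wiles confirms there is no spurious factor.) Once the functions are matched, Theorem~A gives
$$\langle c_0^\chi,\frak{col}_0^\chi\rangle_{\textup{Nek}}=L^\prime_{\xi,\Phi}(\pmb{1})=\widetilde{L}_p^\prime(1,\chi),$$
where in the final equality one records that the trivial character $\pmb{1}$ corresponds to $s=1$ in the chosen normalization, and that the derivative along the cyclotomic character on the left is the usual $\tfrac{d}{ds}$ at $s=1$ on the right.

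I expect the main obstacle to be exactly this last identification carried out \emph{with all normalizations correct}, and in particular the bookkeeping around the exceptional-zero Euler factor: because $\chi(\wp)=1$, the factor distinguishing the ``imprimitive'' object $\widetilde{L}_p$ from the $L$-function naturally attached to $T$ degenerates at $s=1$, and one must verify that the construction of $L_{\xi,\Phi}$ through $\Phi_{\textup{col}}$ and Coleman's map produces the \emph{correct} modified factor; an incorrect factor would shift the derivative at $s=1$ by an $\mathcal{L}$-invariant-type term. A secondary, more routine, point is to confirm that the duality producing $\frak{col}_0^\chi$ is compatible with Nekov\'{a}\v{r}'s local conditions, so that the pairing appearing in Theorem~A is literally $\langle c_0^\chi,\frak{col}_0^\chi\rangle_{\textup{Nek}}$ and not a twisted variant; this, together with $\frak{col}_0^\chi\neq0$, follows from the compatibility of Coleman's map with the local Tate cup-product pairing.
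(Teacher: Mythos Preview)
Your overall strategy matches the paper's exactly: specialize Corollary~\ref{main} with $(\alpha,\Phi)$ built from Coleman's map, then identify $L_{\xi,\Phi}$ with the imprimitive Kubota--Leopoldt function. However, your description of how $\Phi$ arises is garbled. You write that ``feeding the norm-coherent family of cyclotomic units $\xi$ into [Coleman's map] produces a distinguished datum $\Phi=\Phi_{\textup{col}}$,'' but this is backwards: $\Phi$ is Coleman's map \emph{itself}, transported via local duality to a norm-coherent sequence in $\varprojlim_n H^1(\Phi_n,T^*)$. Concretely, the paper descends the normalized Coleman map to $\widetilde{\frak{col}}_n^\chi:\frak{U}H^1(\Phi_n,T)\to\oo[\Gamma_n]$, inverts the canonical isomorphism $\frak{b}:\textup{Hom}_{\oo}(M,\oo)\stackrel{\sim}{\to}\textup{Hom}_{\oo[\Gamma_n]}(M,\oo[\Gamma_n])$ to obtain $\phi^{(n)}$, extends by zero off the universal norms (this requires Lemma~\ref{lem:extendphis}, which you do not mention: one must check the quotient $H^1(\Phi_n,T)/\frak{U}H^1(\Phi_n,T)$ is $\oo$-free), and identifies $\phi^{(n)}\in H^1(\Phi_n,T^*)$ via~(\ref{eqn:isompreparation}). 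The cyclotomic units $\xi$ enter only \emph{afterward}, when paired against $\Phi$; by construction one gets the \emph{equality} $\frak{P}_{\xi,\Phi}=\widetilde{\frak{col}}_\infty^\chi(\xi_\infty^\chi)=\frac{\gamma-1}{\log_p\rho_{\textup{cyc}}(\gamma)}\cdot\al_\chi$ in $\LL$ directly from~(\ref{eqn:colemanmaponcylounits}), so your invocation of the Mazur--Wiles Main Conjecture to exclude a ``spurious factor'' is unnecessary. Differentiating this identity at $s=0$ and using Remark~\ref{rem:observationabuse} then gives $L_{\xi,\Phi}^\prime(\pmb{1})=\pmb{1}(\al_\chi)=L_p(1,\chi)=\widetilde{L}_p^\prime(1,\chi)$ with no $\al$-invariant correction, so your normalization worry, while prudent, does not materialize.
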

Here, $\frak{col}_0^\chi \in \widetilde{H}^1_f(\QQ,T^*)$ is the element we obtain from Coleman's homomorphism and $\widetilde{L}_p(s,\chi)$ is an \emph{imprimitive} Kubota-Leopoldt $p$-adic $L$-function. See \S\ref{subsec:htQeven} for details. See also  \S\ref{subsec:caseimagquad}  for the version of this result when the base field is a quadratic imaginary number field. We remark that  our formula above recovers a $p$-adic variant of Kronecker's limit formula with a new perspective offered by Nekov\'a\v{r}'s theory.

In \S\ref{subsec:htQodd}, we present similar results for totally odd characters $\chi$ (when the base field K is totally real). We remark for now that when $K$ is an arbitrary totally real number field and $\chi$ is totally odd, our calculations provide a new interpretation for Gross' conjecture (and for the Ferrero-Greenberg theorem when $k=\QQ$). See Theorem~\ref{thm:htswhenodd} and Remark~\ref{rem:totallyrealDDP} below.

See also Remark~\ref{rem:ellipticcurvesKurihara} for a related observation when the Galois representation in question is the $p$-adic Tate-module of an elliptic curve $E_{/\QQ}$ which has split-multiplicative reduction at $p$. 
 
The layout of the paper is as follows: In Section~\ref{sec:hts} we give an overview of Nekov\'{a}\v{r}'s theory of Selmer complexes and $p$-adic height pairings. We explicitly describe these objects in \S\ref{subsec:classical} in the cases of interest. In sections~\ref{sec:cyclo}-\ref{sec:main} we restrict our attention to the case $K=\QQ$ and $\chi$ even, and to the case when the base field $K$ is totally imaginary. In Section~\ref{sec:cyclo}, we define three types of cyclotomic ($p$-) units which our calculations rely on. In Section~\ref{sec:calculations}, we calculate the $p$-adic height pairing on these different types of cyclotomic ``units'', and use our computations in Section~\ref{sec:main} to prove a Rubin-style formula. In \S\ref{sec:KLNek}, we use this formula to compute the leading coefficients of certain $p$-adic $L$-functions  in terms of Nekov\'a\v{r}'s heights. 
  
We remark that the results of this paper are not covered by Nekov\'{a}\v{r}'s \cite{nek} general treatment (e.g., by his variant of Rubin's formula in \S11.3.15 and \S11.5.10; nor by his calculations in \S11.4.8). In particular, \cite[Remark 11.4.10]{nek} is erroneous. It would be of interest to extend the formalism developed in \cite[\S11.4]{nek} to cover our setting. 

A line of apology: We gave a very detailed and long outline of prior results of `Gross-Zagier type', although the conclusions of the current paper only concern a very particular (and simple) Galois representation. This is mainly because of the author's desire to translate/transform the results in other settings into the context of~\cite{nek}.  

\subsubsection*{Acknowledgements} The author wishes to thank Ralph Greenberg and Tadashi Ochiai for helpful discussions; Karl Rubin for helpful correspondence and David Burns for an informative conversation on the results of this paper. He also is grateful to Masato Kurihara for explaining the author a related result he proved in a different setting. Special thanks are due to Jan Nekov\'{a}\v{r} for his encouragement and for many enlightening discussions. The author started his work on this project  while he was supported by a William Hodge Postdoctoral Fellowship at IH\'ES and the final form of this paper was written up during his stay at Max Planck Institut f\"ur Mathematik. The author thanks both these institutes for their hospitality. The author also thanks the anonymous referee for pointing out several inaccuracies in an earlier version.

\subsection{Notation and Hypotheses}
\label{subsec:notation}
 Fix once and for all a rational prime $p>2$. For a number field $K$, write $G_K$ for the absolute Galois group $\textup{Gal}(\overline{K}/K)$. Let $\oo$ be the ring of integers of a finite extension $\frak{F}$ of $\QQ_p$, and let  $\chi$ denote a non-trivial Dirichlet character 
$$\chi:G_K \lra \oo^\times,$$
 which has prime-to-$p$ order
 \,and which satisfies $\chi(\wp)=1$ for a prime $\wp\subset K$ lying above $p$. In this paper, we will only\footnote{Except in Remark~\ref{rem:totallyrealDDP}, where we say how the arguments of \S\ref{subsec:htQodd} apply for a general totally real number field.}
  deal with the case $K=\QQ$  or $K=k$, where $k$ is a quadratic imaginary number field such that the prime $p$ splits in $k/\QQ$.

Define $T=\oo(1)\otimes\chi^{-1}$ and $T^*=\oo(\chi)$, rank one $\oo$-modules with a $G_K$-action.  Here $\oo(1)$ is the Tate twist.

Let $L$ will be the fixed field of $\ker(\chi)$ and let $\Delta=\textup{Gal}(L/K)$.  Our assumption that $\chi(\wp)=1$ is equivalent to saying that  $\wp$ splits completely in $L/K$. Let $S_\wp=\{v|\wp\}$ denote the collection of places of $L$ above $\wp$ (the letter ``$v$'' is reserved to stand for these places of $L$), and let $L_v$ denote the completion of $L$ at $v$. Although $L_v=K_\wp$ for each $v$, we will distinguish the completions of $L$ at different places (as different embeddings $L\hookrightarrow\overline{\QQ}_p$) and set $G_v=\textup{Gal}(\overline{\QQ}_p/L_v)$ for a fixed algebraic closure $\overline{\QQ}_p$ of $\QQ_p$.

Fix once and for all embeddings $\iota_\infty:\overline{\QQ} \hookrightarrow \mathbb{C}$, and $\iota_p: \overline{\QQ} \hookrightarrow \overline{\QQ}_p$. The choice of $\iota_p$  fixes a prime $v_0 \in S_\wp$.
  
Let $\QQ_\infty/\QQ$ denote the cyclotomic $\ZZ_p$-extension of $\QQ$ and let $\Gamma=\textup{Gal}(\QQ_\infty/\QQ)$. We write $\rho_{\textup{cyc}}$ for the cyclotomic character $\rho_{\textup{cyc}}:\Gamma\stackrel{\sim}{\ra}1+p\ZZ_p$. Let $\QQ_n$ denote the unique sub-extension of $\QQ_\infty/\QQ$ of degree $p^n$ over $\QQ$, i.e., the fixed field of $\Gamma^{p^n}$. Let $\Phi_n$ be the completion of $\QQ_n$ at the unique prime of $\QQ_n$ above $p$, and set $\Phi_\infty=\cup \Phi_n$, the cyclotomic $\ZZ_p$-extension of $\QQ_p$. By slight abuse of notation $\textup{Gal}(\Phi_\infty/\QQ_p)$ will be denoted by $\Gamma$ as well. We fix a topological generator $\gamma$ of $\Gamma$. We also set $\LL=\oo[[\Gamma]]$ as the cyclotomic Iwasawa algebra.

When the base field $K$ is the quadratic imaginary number field $k$  which satisfies the assumption that $p$ splits in $k/\QQ$, we write $p=\wp\wp^*$ with $\wp\neq\wp^*$. Also in this case, we assume that $p$ does not divide the class number $h_k$ of $k$. For an $\oo_k$-ideal $\frak{I}$, let $k(\frak{I})$ be the ray class field of conductor $\frak{I}$. For each $n\geq 0$ we write 
$$\textup{Gal}(k(\wp^{n+1})/k)=\textup{Gal}(k(\wp^{n+1})/k(\wp))\times H,$$
where $H$ is isomorphic to $\textup{Gal}(k(\wp)/k)$ by restriction. We set 
$$k_n=k(\wp^{n+1})^H, \,\,\, k_\infty=\bigcup_{n\geq0}k_n.$$
Then $k_\infty/k$ is a $\ZZ_p$-extension and we write $\Gamma:=\textup{Gal}(k_\infty/k)$ also when there is no danger of confusion. The extension $k_\infty/k$ is the unique $\ZZ_p$-extension which is unramified outside $\wp$. The prime $\wp$ is totally ramified in $k_\infty/k$. Let $\frak{f}_L \subset \oo_k$ denote the conductor of $L$ (which is prime to $\wp$ by our assumptions on $\chi$) and let $\frak{f}$ be a multiple of $\frak{f}_L$ which is prime to $\wp$ and which also satisfies the condition that the map $\oo_k^\times\ra(\oo_k/\ff)^\times$ is injective. Attached to a Grossencharacter $\varphi$ of $k$ of infinity type $(1,0)$ and of conductor $\frak{f}$, there is an elliptic curve $E$ defined over $F=k(\frak{f})$ with the properties that
\begin{itemize}
\item $E$ has complex multiplication by $\oo_k$
\item $F(E_\textup{tor})$ is an abelian extension of $k$,
\end{itemize}
where we write $F(E_\textup{tor})$ for the extension of $K$ which is generated by the coordinates of the torsion-submodule $E_{\textup{tor}}\subset E(\overline{k})$. For such $E$, we have $F(E[\wp^{n+1}])=k(\frak{f}\wp^{n+1})$ for all $n\geq 0$, and using this fact one obtains a canonical identification $\textup{Gal}(F(E[\wp^{\infty}])/F(E[\wp]))\stackrel{\sim}{\ra} \Gamma$ and the following isomorphisms:
\begin{itemize} 
\item[(i)] $\rho_E:\textup{Gal}(F(E[\wp^{\infty}])/F) \stackrel{\sim}{\lra} \textup{Aut}(E[\wp^{\infty}])=\oo_{k_\wp}^\times\stackrel{\sim}{\lra}\ZZ_p^\times$,
\item[(ii)] $\rho_\Gamma:=\rho_E\big{|}_{\Gamma}: \Gamma \stackrel{\sim}{\lra} 1+p\ZZ_p$.
\end{itemize}
The character $\rho_\Gamma$ will play the role of cyclotomic character when our base field $K$ is the quadratic imaginary number field $k$.

For any finitely generated abelian group $M$ endowed with a $G_{K}$ action, $\widehat{M}$ will denote its $p$-adic completion $\textup{Hom}(\textup{Hom}(M,\QQ_p/\ZZ_p),\QQ_p/\ZZ_p)$, and $M^\chi$ will denote the $\chi$-isotypic part of $\widehat{M}\otimes_{\ZZ_p}\oo$. Also, let $\log_p:1+p\ZZ_p\ra \ZZ_p$ denote the $p$-adic logarithm.

For a field $K$ (with fixed separable closure $\overline{K}/K$) and a $\oo[[\textup{Gal}(\overline{K}/K)]]$-module $X$ which is finitely generated over $\oo$, we will denote the $i$-th cohomology (with continuous cochains) of the group $\textup{Gal}(\overline{K}/K)$ with coefficients in $X$ by $H^i(K,X)$.

For every positive integer $n$, we define $\mu_n \subset \overline{\QQ}$ to be the set of $n$th roots of unity.

\section{Height pairings on extended Selmer groups}
\label{sec:hts}
\subsection{Generalities}
\label{subsec:tilde}
In this section we very briefly review Nekov\'{a}\v{r}'s theory of Selmer complexes and his definition of extended Selmer groups. The treatment in this section is far more general than what is needed for the purposes of this paper, and it is much less general than what is covered in~\cite{nek}. For example, we focus on coefficient rings such as the ring of integers $\mathcal{O}$ of a finite extension of $\QQ_p$, or the one variable Iwasawa algebra $\mathcal{O}[[\Gamma]]$; and we restrict our attention to a complex of $\mathcal{O}$-modules $M$ of finite type, endowed with a continuous action of the absolute Galois group $G_K$ of a fixed base field $K$, \emph{concentrated in degree zero}. From~\S\ref{subsec:classical} on, $K$ will be $\QQ$ (except in \S\ref{subsec:caseimagquad} where $K=k$, a quadratic imaginary number field and Remark~\ref{rem:totallyrealDDP} where $K$ is an arbitrary totally real field), and $M$ will be one of $\oo(1)\otimes\chi^{-1}$, $\oo(\chi)$, $\oo(1)$ or $\oo$ (in degree zero) . 

Let $G$ be a profinite group (given the profinite topology) and let $\mathcal{O}$ be as above. Let $M$ be a free $\mathcal{O}$-module of finite type on which $G$ acts continuously. Then $M$ is admissible in the sense of~\cite[\S3.2]{nek} and we can talk about the complex of \emph{continuous} cochains $C^\bullet(G,M)$ as in~\S3.4 of {loc.cit}. Let $K$ be a number field with a fixed algebraic closure $\overline{K}$ and let $S$ denote a finite set of primes of $K$ which contains all primes above $p$, all primes at which the representation $M$ is ramified and all infinite places of $K$, let $S_f$ denote the subset of finite places of $S$. Let $K_S$ the maximal sub-extension of $\overline{K}/K$ which is unramified outside $S$, and let $G_{K,S}$ denote the Galois group $\textup{Gal}(K_S/K)$. For all $w \in S_f$, we write $K_w$ for the completion of $K$ at $w$, and $G_w$ for its absolute Galois group. Whenever it is convenient, we will identify $G_w$ with a decomposition subgroup inside $G_K:=\textup{Gal}(\overline{K}/K)$. We will be interested in the cases $G=G_{K,S}$ or $G=G_w$. 

\subsubsection{Selmer complexes}
\label{subsubsec:selmer}
Classical Selmer groups are defined as elements of the global cohomology group $H^1(G_{K,S},M)$ satisfying certain local conditions; see~\cite[\S2.1]{mr02} for the most general definition. The main idea of~\cite{nek} is to impose local conditions on the level of complexes. We go over basics of Nekov\'{a}\v{r}'s theory, for details see~\cite{nek}.

\begin{define}
\label{def:local condition}
\emph{Local conditions} for $M$ are given by a collection $\Delta(M)=\{\Delta_w(M)\}_{w\in S_f}$, where $\Delta_w(M)$ stands for a morphism of complexes of $\mathcal{O}$-modules 
$$i_w^+(M): U_w^+\lra C^\bullet(G_w,M)$$
 for each $w\in S_f$.
\end{define}

Also set 
$$U_v^-(M)=\textup{Cone}\left( U_v^+(M) \stackrel{-i_v^+}{\lra} C^\bullet(G_v,M) \right)$$
and 
$$U_S^{\pm}(M)=\bigoplus_{w\in S_f} U_w^{\pm}(M); \,\,\,\,i_S^+(M)= (i_w^+(M))_{w\in S_f}.$$
We also define 
$$\textup{res}_{S_f}: C^\bullet(G_{K,S},M) \lra \bigoplus_{w\in S_f}C^\bullet(G_w,M)$$ as the canonical restriction morphism.
\begin{define}
\label{def: selmer complex}
The \emph{Selmer complex} associated with the choice of local conditions $\Delta(M)$ on $M$ is given by the complex 
$$\xymatrix@C=.27in{
\widetilde{C}_f^\bullet(G_{K,S},M,\Delta(M)):= \textup{Cone}(C^\bullet(G_{K,S},M)\bigoplus U_S^+(M)\ar[rr]^(.65){\textup{res}_{S_f}-i_{S}^+(M)}&&\bigoplus_{w\in S_f} C^\bullet(G_w,M))[-1]
}$$
 where $[n]$ denotes a shift by $n$. The corresponding object in the derived category will be denoted by $\widetilde{\mathbf{R}\Gamma}_f(G_{K,S},M,\Delta(M))$ and its cohomology by $\widetilde{H}^i_f(G_{K,S},M,\Delta(M))$ (or simply by $\widetilde{H}^i_f(K,M)$ or by $\widetilde{H}^i_f(M)$ when there is no danger of confusion). The $\oo$-module $\widetilde{H}^1_f(M)$ will be called the \emph{extended Selmer group}.

The object in the derived category corresponding to the complex $C^\bullet(G_{K,S},M)$ will be denoted by ${\mathbf{R}\Gamma}(G_{K,S},M)$.
\end{define}

\subsubsection{Comparison with classical Selmer groups}
\label{subsubsec:comparison}
For each $w \in S_f$, suppose that we are given a submodule 
$$H^1_\FF(K_w,M)\subset H^1(K_w,M).$$
This data which  $\FF$ encodes is called a \emph{Selmer structure} on $M$. Starting with $\FF$, one defines the Selmer group as 
$$H^1_\FF(K,M):=\ker\left\{H^1(G_{K,S},M)\lra \bigoplus_{w\in S_f}\frac{H^1(K_w,M)}{H^1_\FF(K_w,M)}\right\}.$$

On the other hand,  as explained in~\cite[\S6.1.3.1-\S6.1.3.2]{nek}, there is an exact triangle
$$
U_S^-(M)[-1]\lra \widetilde{\mathbf{R}\Gamma}_f(G_{K,S},M,\Delta(M)) \lra {\mathbf{R}\Gamma}(G_{K,S},M)\lra U_S^-(M)
$$
This gives rise to an exact sequence in the level of cohomology:
\begin{prop}[\textup{\cite[\S0.8.0 and \S9.6]{nek}}]\label{prop:compare1} 
For each $i$, the following sequence is exact:
$$\dots\lra H^{i-1}(U_S^-(M))\lra \widetilde{H}^i_f(M) \lra H^i(G_{K,S},M)\lra H^i(U_S^-(M))\lra \dots$$
\end{prop}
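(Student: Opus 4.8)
The statement to prove is Proposition~\ref{prop:compare1}, which asserts exactness of the long cohomology sequence attached to the exact triangle relating the Selmer complex, the ambient cochain complex, and the local term $U_S^-(M)$. Here is how I would approach it.

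\medskip

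The plan is to derive the long exact sequence directly from the exact triangle
$$U_S^-(M)[-1]\lra \widetilde{\mathbf{R}\Gamma}_f(G_{K,S},M,\Delta(M)) \lra  \widetilde{\mathbf{R}\Gamma}(G_{K,S},M)\lra U_S^-(M)$$
that is quoted from \cite[\S6.1.3.1--\S6.1.3.2]{nek}. First I would recall that the Selmer complex $\widetilde{C}_f^\bullet$ is by definition a shifted mapping cone, $\textup{Cone}(\textup{res}_{S_f}-i_S^+)[-1]$, on the map $C^\bullet(G_{K,S},M)\oplus U_S^+(M)\to \bigoplus_{w\in S_f}C^\bullet(G_w,M)$. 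One then observes that the sub-object $U_S^+(M)$ together with the target $\bigoplus_w C^\bullet(G_w,M)$ assemble (again as a cone, up to a shift) into precisely $U_S^-(M)[-1]$, by the very definition $U_v^-(M)=\textup{Cone}(U_v^+(M)\xrightarrow{-i_v^+} C^\bullet(G_v,M))$ and the direct-sum conventions $U_S^\pm=\bigoplus_w U_w^\pm$. This identification is a formal manipulation of iterated cones; the key point is the octahedral axiom in the derived category of $\mathcal{O}$-modules, which produces the displayed exact triangle out of the two-step cone construction. Since I may assume the triangle as given in the excerpt, I would simply cite it.

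\medskip

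Given the exact triangle, the long exact sequence is then automatic: any exact triangle $A\to B\to C\to A[1]$ in the derived category of $\mathcal{O}$-modules yields, upon taking cohomology $H^i$, a long exact sequence $\cdots\to H^i(A)\to H^i(B)\to H^i(C)\to H^{i+1}(A)\to\cdots$. Applying this with $A=U_S^-(M)[-1]$, $B=\widetilde{\mathbf{R}\Gamma}_f(G_{K,S},M,\Delta(M))$, $C=\widetilde{\mathbf{R}\Gamma}(G_{K,S},M)$, and using the shift identity $H^i(U_S^-(M)[-1])=H^{i-1}(U_S^-(M))$, gives exactly
$$\cdots\lra H^{i-1}(U_S^-(M))\lra \widetilde{H}^i_f(M) \lra H^i(G_{K,S},M)\lra H^i(U_S^-(M))\lra \cdots,$$
where I have used $H^i(\widetilde{\mathbf{R}\Gamma}_f)=\widetilde{H}^i_f(M)$ and $H^i(\widetilde{\mathbf{R}\Gamma}(G_{K,S},M))=H^i(G_{K,S},M)$ by definition. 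The connecting map $\widetilde{H}^i_f(M)\to H^i(G_{K,S},M)$ is the natural one induced by the second arrow of the triangle, i.e.\ the projection from the Selmer cone onto the global cochain complex.

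\medskip

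The only genuine content, and thus the main obstacle, is justifying the identification of the iterated-cone data with the clean triangle above, i.e.\ checking that the construction of $\widetilde{C}_f^\bullet$ as a cone on $(\textup{res}_{S_f}-i_S^+)$ really does fit into a triangle whose third term is $U_S^-(M)[-1]$ with the correct maps (signs included). This is exactly the bookkeeping carried out in \cite[\S6.1.3]{nek}, which the statement of Proposition~\ref{prop:compare1} cites, so in the write-up I would reduce to that reference rather than reprove it; everything after that is the formal passage from a triangle to its cohomology sequence. One small point worth spelling out is that all the objects here are honest complexes of $\mathcal{O}$-modules (cochains are continuous and $M$ is $\mathcal{O}$-admissible), so there are no subtleties about which derived category one works in, and the long-exact-sequence functor applies verbatim.
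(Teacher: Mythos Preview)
Your proposal is correct and matches the paper's approach exactly: the paper simply records the exact triangle from \cite[\S6.1.3]{nek} and then states that it ``gives rise to an exact sequence in the level of cohomology,'' citing \cite[\S0.8.0 and \S9.6]{nek} for the proposition itself without further argument. Your write-up spells out the standard passage from triangle to long exact sequence in more detail than the paper does, but there is no difference in substance.
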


This proposition is used to compare Nekov\'{a}\v{r}'s extended Selmer groups to classical Selmer groups. Although this may be achieved in greater generality, we will only state the relevant comparison theorem for \emph{Greenberg's local conditions} (and \emph{Greenberg's Selmer groups}) whose definitions we now recall. For further details, see~\cite{g1, gr, nek}.

Let ${I}_w$ denote the inertia subgroup of $G_w$. Suppose we are given an $\mathcal{O}[[G_w]]$-submodule $M^+_w$ of $M$ for each place $w|p$ of $K$, set $M^-_w=M/M^+_w$. Then Greenberg's local conditions (on the complex level, i.e., in the sense of~\cite[\S6]{nek}) are given by
$$U_w^+=\left\{
\begin{array}{cl}
  C^\bullet(G_w,M_w^+)& \hbox{ if } w|p,    \\\\
  C^\bullet(G_w/I_w,M^{I_w})& \hbox{ if } w\nmid p
\end{array}
\right.$$
with the obvious choice of morphisms 
$$i_w^+(M): U_w^+(M)\lra C^\bullet(G_w,M).$$
As in Definition~\ref{def: selmer complex}, we then obtain a Selmer complex and an extended Selmer group, which we denote by $\widetilde{H}^1_f(M)$. Greenberg's local conditions are the only type of local conditions we will deal with from now on.

We now define the relevant Selmer structure\footnote{For a general $M$, our definition of $\FFc$ (\emph{the canonical Selmer structure}) slightly differs from its original definition in~\cite{mr02}. However, for the specific Galois representation we use starting from \S\ref{subsec:classical} on, they do coincide.} $\FFc$ on $M$.
\begin{define}
\label{def:can-selmer}
The \emph{canonical Selmer structure} $\FFc$ is given by
$$H^1_{\FFc}(K_w,M)=\left\{
\begin{array}{cl}
  \begin{array}{l} \textup{im}\left(H^1(G_w,M_w^+)\ra H^1(K_w,M)\right)=\\ \ker\left(H^1(G_w,M_w)\ra H^1(G_w,M_w^-)\right)  \end{array}& \hbox{ \,\,\,\,if } w|p,    \\\\
 \begin{array}{l}\ker \left(H^1(G_w,M)\ra H^1(I_w,M) \right)= \\ \textup{im}\left(H^1(G_w/I_w,M^{I_w}) \ra H^1(G_w,M) \right)   \end{array}& \hbox{\,\,\,\, if } w\nmid p.
\end{array}
\right.
$$
\end{define}

Hence, we obtain the following Selmer group (which is called the \emph{strict Selmer group} in~\cite[\S9.6.1]{nek} and denoted by $S_M^{\textup{str}}(K)$): 
\be
\label{eqn:selmer group}
H^1_{\FFc}(K,M)=\ker\left(H^1(G_{K,S},M)\lra \bigoplus_{w|p}H^1(G_w,M_w^-)\oplus \bigoplus_{w\nmid p} H^1(I_w,M)   \right).
\ee

Proposition~\ref{prop:compare1} now shows that:
\begin{prop}
\label{prop:compare}
The following sequence is exact:
$$M^{G_K} \lra \bigoplus_{w|p}(M_w^-)^{G_w} \lra \widetilde{H}^1_f(M)\lra H^1_{\FFc}(K,M)\lra 0.$$
\end{prop}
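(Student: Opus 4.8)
The plan is to derive Proposition~\ref{prop:compare} directly from Proposition~\ref{prop:compare1} by making the terms $H^{i-1}(U_S^-(M))$, $H^i(G_{K,S},M)$, and $H^i(U_S^-(M))$ explicit for $i=1$ under Greenberg's local conditions, and then assembling the resulting long exact sequence into the short four-term sequence claimed. First I would record the cohomology of the cones $U_w^-(M)$. By definition $U_w^-(M)=\textup{Cone}(U_w^+(M)\to C^\bullet(G_w,M))$, so there is a long exact sequence relating $H^i(U_w^+(M))$, $H^i(G_w,M)$, and $H^i(U_w^-(M))$; here $U_w^+(M)=C^\bullet(G_w,M_w^+)$ for $w\mid p$ and $C^\bullet(G_w/I_w,M^{I_w})$ for $w\nmid p$. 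For $w\mid p$ the short exact sequence $0\to M_w^+\to M\to M_w^-\to 0$ identifies $U_w^-(M)$ (up to quasi-isomorphism) with $C^\bullet(G_w,M_w^-)$, so $H^0(U_w^-(M))=(M_w^-)^{G_w}$ and $H^1(U_w^-(M))=H^1(G_w,M_w^-)$. For $w\nmid p$ the analysis gives $H^0(U_w^-(M))=0$ (since $H^0(G_w,M)=M^{G_w}=(M^{I_w})^{G_w/I_w}=H^0(U_w^+(M))$ and the connecting map is injective) and $H^1(U_w^-(M))=H^1(I_w,M)^{G_w/I_w}$, or at any rate a module whose kernel-in-$H^1(G_{K,S},M)$ cuts out exactly the unramified-at-$w$ condition; it is enough that $H^0(U_w^-(M))=0$ and that $\ker(H^1(G_{K,S},M)\to H^1(U_w^-(M)))$ equals $\ker(H^1(G_{K,S},M)\to H^1(I_w,M))$.

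Next I would feed this into Proposition~\ref{prop:compare1} with $i=1$: the sequence reads
$$H^0(U_S^-(M))\lra \widetilde H^1_f(M)\lra H^1(G_{K,S},M)\stackrel{\partial}{\lra} H^1(U_S^-(M)),$$
together with the tail $H^0(G_{K,S},M)=M^{G_K}\to H^0(U_S^-(M))$ coming one step earlier. Since $H^0(U_S^-(M))=\bigoplus_{w\mid p}(M_w^-)^{G_w}$ by the computation above (the $w\nmid p$ contributions vanish), the initial segment
$$M^{G_K}\lra \bigoplus_{w\mid p}(M_w^-)^{G_w}\lra \widetilde H^1_f(M)\lra H^1(G_{K,S},M)$$
is exact. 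It then remains to identify the image of the last map with $H^1_{\FFc}(K,M)$, i.e., to show $\ker\partial = H^1_{\FFc}(K,M)$. Unwinding, $\partial$ is the composite of $\textup{res}_{S_f}$ with the projections $H^1(G_w,M)\to H^1(U_w^-(M))$, so its kernel is exactly the set of classes that are locally in $H^1_{\FFc}(K_w,M)$ for every $w\in S_f$: at $w\mid p$ this is $\ker(H^1(G_w,M)\to H^1(G_w,M_w^-))$, which is $H^1_{\FFc}(K_w,M)$ by Definition~\ref{def:can-selmer}; at $w\nmid p$ this is $\ker(H^1(G_w,M)\to H^1(I_w,M))$, again $H^1_{\FFc}(K_w,M)$. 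Hence $\ker\partial=H^1_{\FFc}(K,M)$ as defined in~\eqref{eqn:selmer group}, and the map $\widetilde H^1_f(M)\to H^1(G_{K,S},M)$ has image exactly $H^1_{\FFc}(K,M)$, giving the surjection $\widetilde H^1_f(M)\twoheadrightarrow H^1_{\FFc}(K,M)$ and completing the four-term exact sequence.

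I expect the main obstacle to be the bookkeeping at primes $w\nmid p$: one must check that the connecting map $H^0(U_w^+(M))\to H^0(G_w,M)$ is an isomorphism (so that these primes contribute nothing to $H^0(U_S^-(M))$) and that the relevant piece of $H^1(U_w^-(M))$ really detects ramification, i.e., that $\ker(H^1(G_{K,S},M)\to H^1(U_w^-(M)))=\ker(H^1(G_{K,S},M)\to H^1(I_w,M))$ rather than something larger or smaller; this uses the inflation--restriction sequence for $I_w\subset G_w$ and the fact that $H^1(G_w/I_w,M^{I_w})\to H^1(G_w,M)$ has image $H^1_{\FFc}(K_w,M)$. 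Everything else is a formal consequence of Proposition~\ref{prop:compare1} and the definitions; in particular no new input about $M$ is needed, so the argument applies uniformly to all four representations $\oo(1)\otimes\chi^{-1}$, $\oo(\chi)$, $\oo(1)$, $\oo$ of interest.
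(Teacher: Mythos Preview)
Your argument is correct and is exactly the unwinding that the paper has in mind: the paper's own ``proof'' is simply a pointer to \cite[Lemma~9.6.3]{nek}, and what you have written is essentially the content of that lemma specialized to Greenberg's local conditions. One small remark on the bookkeeping at $w\nmid p$: the vanishing $H^0(U_w^-(M))=0$ needs both that $H^0(U_w^+)\to H^0(G_w,M)$ is an \emph{isomorphism} (which you note) and that the inflation map $H^1(G_w/I_w,M^{I_w})\to H^1(G_w,M)$ is injective; your phrase ``the connecting map is injective'' should be read as the latter, and with that in place the cone long exact sequence gives $H^0(U_w^-(M))=\ker(\textup{inflation})=0$ as required.
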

See~\cite[Lemma~9.6.3]{nek} for a proof.
\begin{rem}
Note that if $(M_w^-)^{G_w}=0$ for all $w|p$, then the extended Selmer group $\widetilde{H}^1_f(M)$ coincides with the canonical Selmer group $H^1_{\FFc}(K,M)$. However, if some $(M_w^-)^{G_w}\neq 0$ then $\widetilde{H}^1_f(M)$ is strictly larger than $H^1_{\FFc}(K,M)$ (under the assumption that $M^{G_K}$=0, say). This is the main feature of Nekov\'{a}\v{r}'s Selmer complexes: They reflect the existence of exceptional zeros, unlike classical Selmer groups.
\end{rem}
\subsubsection{Height pairings}
\label{subsubsec:heights}
We now recall Nekov\'{a}\v{r}'s definition of height pairings on his extended Selmer groups. All the references in this section are to~\cite[\S11]{nek} unless otherwise stated. 

Let $M^*=\textup{Hom}(M,\mathcal{O})(1)$ (in Nekov\'{a}\v{r}'s language this is $\mathcal{D}(M)(1)$, the Grothendieck dual of $M$). Let  $\Gamma$ be the Galois group $\textup{Gal}(\QQ_{\infty}/\QQ)$ (resp., the Galois group $\textup{Gal}(k_\infty/k)$) and $\rho$ be the cyclotomic character $\rho_{\textup{cyc}}$ (resp., the character $\rho_{\Gamma}$) when the base field $K$ is $\QQ$ (also more generally, when $K$ is a totally real number field) (resp., when $K$ is the quadratic imaginary number field $k$). The height pairing 
$$\xymatrix{
\langle\,,\,\rangle_{\textup{Nek}}:\,\, \widetilde{H}^1_f(M) \otimes_{\mathcal{O}}\widetilde{H}^1_f(M^*)\ar[r]& \mathcal{O}\otimes_{\ZZ_p}\Gamma\ar[rr]^(.59){\textup{id}\,\otimes\,\log_p\rho}&&\oo
}$$
 is defined in two steps:
\begin{itemize}
\item[(i)] Apply the \emph{Bockstein} morphism 
$$\xymatrix{\beta:\widetilde{\mathbf{R}\Gamma}_f(M) \ar[r]& \widetilde{\mathbf{R}\Gamma}_f(M)[1]\otimes_{\ZZ_p}\Gamma\ar[rr]^(.56){\textup{id}\,\otimes\,\log_p\rho}&&\widetilde{\mathbf{R}\Gamma}_f(M)[1]}$$
  See~\cite[\S11.1.3]{nek} for the original definition of $\beta$. Let $\beta^1$ denote the map induced on the level of cohomology: 
$$\beta^1:\,\, \widetilde{H}_f^1(M) \lra \widetilde{H}^2_f(M).$$
\item[(ii)] Use the \emph{global duality} pairing 
$$\langle\,,\,\rangle_{\textup{PT}}:\,\, \widetilde{H}^2_f(M)\otimes_{\mathcal{O}}\widetilde{H}^1_f(M^*) \lra \mathcal{O}$$
 on the image of $\beta^1$ inside of $\widetilde{H}^2_f(M)$. Here the subscript PT stands for Poitou-Tate, and the global pairing comes from summing up the invariants of the local cup product pairing, see~\cite[\S6.3]{nek} for more details.
\end{itemize}

Just as for other height pairings, universal norms are in the kernel of Nekov\'a\v{r}'s height pairing:
\begin{prop}[{\cite[Proposition 11.5.7 and \S11.5.8]{nek}}]
\label{prop:kernel}
For $X=M, M^*$, the universal norms 
$$\textup{im}\left(\widetilde{H}^1_f(G_{K,S},X\otimes_{\mathcal{O}}\mathcal{O}[[\Gamma]],\Delta(M)\otimes\mathcal{O}[[\Gamma]])\lra \widetilde{H}^1_f(X)\right)$$
 are in the kernel of the height pairing $\langle\,,\,\rangle_{\textup{Nek}}$.
\end{prop}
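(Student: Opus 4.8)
The plan is to unwind the definition of the height pairing and exhibit a factorization of the Bockstein morphism $\beta$ through the Iwasawa-theoretic Selmer complex, so that universal norms are killed for trivial reasons. First I would recall the provenance of $\beta$: it arises (see \S11.1.3 of~\cite{nek}) from the tautological extension of $\mathcal{O}[[\Gamma]]$-modules
$$0\lra \mathcal{O}\otimes_{\ZZ_p}\Gamma \lra \mathcal{O}[[\Gamma]]/I^2 \lra \mathcal{O}\lra 0,$$
where $I\subset\mathcal{O}[[\Gamma]]$ is the augmentation ideal, tensored with $X$ (for $X=M$ or $M^*$) and equipped with the compatible system of Greenberg local conditions $\Delta(M)\otimes\mathcal{O}[[\Gamma]]$. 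Applying $\widetilde{\mathbf{R}\Gamma}_f$ to this short exact sequence of coefficients produces the connecting map, which after the twist by $\textup{id}\otimes\log_p\rho$ is exactly $\beta$. The point is that this connecting map fits into a commutative diagram whose top row is the exact triangle
$$\widetilde{\mathbf{R}\Gamma}_f(X\otimes\mathcal{O}[[\Gamma]])\otimes^{\mathbf{L}}\mathcal{O}\lra \cdots\lra \widetilde{\mathbf{R}\Gamma}_f(X)\stackrel{\beta}{\lra}\widetilde{\mathbf{R}\Gamma}_f(X)[1],$$
so that the composite $\widetilde{H}^1_f(X\otimes\mathcal{O}[[\Gamma]])\to\widetilde{H}^1_f(X)\stackrel{\beta^1}{\to}\widetilde{H}^2_f(X)$ vanishes: an element in the image of the corestriction (specialization) map from the Iwasawa Selmer group lifts to the $\mathcal{O}[[\Gamma]]$-level, hence dies under the connecting homomorphism $\beta^1$.

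With this in hand, the proof is essentially a diagram chase. Step one: write down the explicit complex-level model for $\beta$ coming from~\S11.1.3 and verify the commutativity of the square relating it to the specialization triangle for $\widetilde{\mathbf{R}\Gamma}_f$; this is where one must be careful that the local conditions $\Delta(M)\otimes\mathcal{O}[[\Gamma]]$ are genuinely compatible with $\Delta(M)$ under $-\otimes_{\mathcal{O}[[\Gamma]]}\mathcal{O}$, which for Greenberg conditions is immediate since $M^+_w\otimes\mathcal{O}[[\Gamma]]$ specializes to $M^+_w$. Step two: conclude that $\beta^1$ annihilates the image of $\widetilde{H}^1_f(X\otimes\mathcal{O}[[\Gamma]])$ in $\widetilde{H}^1_f(X)$. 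Step three: for $X=M$, feed this into the second step of the construction of $\langle\,,\,\rangle_{\textup{Nek}}$ — since $\langle u,\alpha\rangle_{\textup{Nek}}=\langle\beta^1(u),\alpha\rangle_{\textup{PT}}$ and $\beta^1(u)=0$ for $u$ a universal norm, the pairing vanishes on the left; step four: for $X=M^*$ one uses the symmetry (up to sign) of Nekov\'a\v{r}'s height pairing under the interchange of $M$ and $M^*$ (\S11.2), or equivalently runs the identical argument with the roles of $M$ and $M^*$ swapped, to get vanishing on the right argument.

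The main obstacle I anticipate is not conceptual but bookkeeping: pinning down the precise complex-level description of the Bockstein morphism and checking that the relevant square of complexes commutes on the nose (or at least in the derived category), including the matching of the local-condition morphisms $i_w^+$ on both levels. One has to be vigilant about the shift $[-1]$ in the definition of the Selmer complex, about signs in the cone construction and in $-i_v^+$, and about the identification $\mathcal{O}[[\Gamma]]\otim^{\mathbf L}_{\mathcal{O}[[\Gamma]]}\mathcal O\simeq\mathcal O$ being compatible with the Koszul-type resolution of $\mathcal{O}$ over $\mathcal{O}[[\Gamma]]$ that computes the derived tensor product. Once the diagram is set up correctly, the vanishing is formal. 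In the interest of brevity I would simply cite~\cite[Proposition 11.5.7 and \S11.5.8]{nek} for the general statement and remark that in our situation — where $M=\oo(1)\otimes\chi^{-1}$ and the tower is the cyclotomic (resp.\ anticyclotomic-type) $\ZZ_p$-extension — all hypotheses of that proposition are trivially satisfied, so no further argument is needed.
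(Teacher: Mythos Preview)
Your sketch is essentially correct and matches the argument one finds in Nekov\'a\v{r}'s \S11.5.7--11.5.8: the Bockstein map $\beta$ arises as the connecting morphism for the tautological extension $0\to I/I^2\to \mathcal{O}[[\Gamma]]/I^2\to \mathcal{O}\to 0$, hence factors through the specialization triangle from the $\mathcal{O}[[\Gamma]]$-level, so that $\beta^1$ kills universal norms and the pairing vanishes via $\langle u,\alpha\rangle_{\textup{Nek}}=\langle\beta^1(u),\alpha\rangle_{\textup{PT}}$.

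That said, you should be aware that the paper itself gives \emph{no proof at all} of this proposition: it is stated with the attribution to \cite[Proposition~11.5.7 and \S11.5.8]{nek} and used as a black box. So your final sentence --- ``in the interest of brevity I would simply cite \cite[Proposition~11.5.7 and \S11.5.8]{nek}'' --- is exactly what the paper does, and the preceding outline, while accurate and helpful as an explanation, goes beyond what the paper provides. There is nothing to compare your approach against; you have simply unpacked the content of the citation.
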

 Here $\Delta(M)\otimes\mathcal{O}[[\Gamma]]$ stands for an appropriate propagation of the local conditions $\Delta(M)$ on $M$ to $M\otimes_{\mathcal{O}}\mathcal{O}[[\Gamma]]$, see \cite[\S8]{nek} (particularly \S8.6) for details. 
\subsection{The classical case: $T=\oo(1)\otimes\chi^{-1}$}
\label{subsec:classical}
In this section we explicitly calculate both the classical Selmer groups and the extended Selmer groups associated with the representations $T=\oo(1)\otimes\chi^{-1}$ and $T^*=\oo(\chi)$, viewed as a representation of $G_{K}$. We keep the notation of~\S\ref{subsec:tilde}. Let $S=\{\frak{q}: \frak{q}\mid p\frak{f}_\chi\infty\}$ be a set of places of $K$. We set $T^+=T$, $(T^*)^+=0$ (hence $T^-=0$, $(T^*)^-=T^*$).

\begin{lemma}
\label{lem:comparetilde}
\begin{itemize}
\item[(i)] $\widetilde{H}^1_f(K,T) \stackrel{\sim}{\lra} H^1_{\FFc}(K,T),$
\item[(ii)] The sequence 
$$0\lra \bigoplus_{\wp|p}H^0(K_\wp,\oo(\chi)) \lra \widetilde{H}^1_f(K,T^*) \lra H^1_{\FFc}(K,T^*)\lra 0$$ is exact.
\end{itemize}
\end{lemma}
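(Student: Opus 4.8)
The plan is to apply Proposition~\ref{prop:compare} directly to the two representations $T$ and $T^*$, using the specific choices $T^+=T$ and $(T^*)^+=0$ recorded just above the statement, and then simplify the exact sequence
$$M^{G_K}\lra \bigoplus_{\wp|p}(M_\wp^-)^{G_w}\lra \widetilde{H}^1_f(M)\lra H^1_{\FFc}(K,M)\lra 0$$
by computing the relevant $G_K$- and $G_w$-invariants. For part (i), with $M=T$ we have $T^+=T$, so $T_\wp^-=T/T^+=0$ for every $\wp\mid p$, whence $\bigoplus_{\wp\mid p}(T_\wp^-)^{G_w}=0$. Thus the four-term sequence of Proposition~\ref{prop:compare} already degenerates to an isomorphism $\widetilde{H}^1_f(K,T)\xrightarrow{\sim} H^1_{\FFc}(K,T)$ provided the leftmost map is zero, which it is once the middle-left term vanishes. (One should note in passing that $T^{G_K}=(\oo(1)\otimes\chi^{-1})^{G_K}=0$ since $\chi$ is a nontrivial character, so even the exactness at the first spot is automatic; but it is the vanishing of the $(T_\wp^-)^{G_w}$ that does the real work here.)

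For part (ii), take $M=T^*=\oo(\chi)$. Now $(T^*)^+=0$, so $(T^*)^-=T^*/0=T^*=\oo(\chi)$, and the middle-left term of Proposition~\ref{prop:compare} becomes $\bigoplus_{\wp\mid p}(T^*_\wp)^{G_w}=\bigoplus_{\wp\mid p}H^0(K_\wp,\oo(\chi))$. So the sequence of Proposition~\ref{prop:compare} reads
$$(\oo(\chi))^{G_K}\lra \bigoplus_{\wp\mid p}H^0(K_\wp,\oo(\chi))\lra \widetilde{H}^1_f(K,T^*)\lra H^1_{\FFc}(K,T^*)\lra 0.$$
To obtain the claimed short exact sequence I need the first arrow to vanish, i.e. $H^0(K,\oo(\chi))=(\oo(\chi))^{G_K}=0$; this again follows because $\chi$ is a nontrivial Dirichlet character, so $G_K$ acts on $\oo(\chi)$ through a nontrivial finite-order character and has no nonzero fixed vectors. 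Dropping the zero term then leaves exactly
$$0\lra \bigoplus_{\wp\mid p}H^0(K_\wp,\oo(\chi))\lra \widetilde{H}^1_f(K,T^*)\lra H^1_{\FFc}(K,T^*)\lra 0,$$
which is the assertion of (ii).

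The only genuine point requiring care — and the step I would treat as the main obstacle, though it is mild — is checking that the hypotheses of Proposition~\ref{prop:compare} are actually met for these $M$ with the chosen Greenberg local conditions, namely that the $\FFc$ of Definition~\ref{def:can-selmer} for $T$ and $T^*$ agrees with the canonical Selmer structure in the sense used elsewhere in the paper (the footnote accompanying Definition~\ref{def:can-selmer} asserts this coincidence precisely for the representation in play from \S\ref{subsec:classical} onward), and that $H^1_{\FFc}(K,M)$ is the strict Selmer group $S_M^{\textup{str}}(K)$ appearing in~\eqref{eqn:selmer group}. Given that, the proof is simply: instantiate Proposition~\ref{prop:compare}, compute $M_\wp^-$ from the choices $T^+=T$, $(T^*)^+=0$, and invoke nontriviality of $\chi$ to kill the two $H^0(K,\cdot)$ terms. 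I would also remark that in case (i) the conclusion matches the Remark following Proposition~\ref{prop:compare}: since $T_\wp^-=0$ the hypothesis $(M_w^-)^{G_w}=0$ holds trivially, so the extended Selmer group coincides with $H^1_{\FFc}$, whereas in case (ii) the terms $H^0(K_\wp,\oo(\chi))$ are nonzero precisely because $\chi(\wp)=1$, which is the exceptional-zero phenomenon that makes $\widetilde{H}^1_f(K,T^*)$ strictly larger than $H^1_{\FFc}(K,T^*)$.
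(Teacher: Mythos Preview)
Your proposal is correct and follows exactly the paper's approach: the paper's proof is the single line ``Immediate from Proposition~\ref{prop:compare},'' and you have simply unpacked that reference by substituting $T^-=0$ and $(T^*)^-=\oo(\chi)$ into the four-term exact sequence and using the nontriviality of $\chi$ to kill $(T^*)^{G_K}$. The additional remarks about verifying the hypotheses of Proposition~\ref{prop:compare} and about the exceptional-zero interpretation are accurate but go beyond what the paper records.
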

\begin{proof}
Immediate from Proposition \ref{prop:compare}. 
\end{proof}

\begin{rem}
\label{rem:changingS}
For our particular Galois representation $T$, the Selmer group $H^1_{\FFc}(K,T)$ as defined above agrees with what \cite{mr02} calls $H^1_{\FFc}(K,T)$. Indeed, in the language of \cite{mr02}, $H^1_{\FFc}(\QQ,T)$ is defined as 
$$H^1_{\FFc}(K,T)=\ker\left(H^1(G_{K,S},T) \lra \bigoplus_{\frak{q}\in S, \frak{q} \nmid p} \frac{H^1(K_\frak{q},T)}{H^1_{{f}}(K_\frak{q},T)} \right)$$ 
where $\ff=\ff_\chi$ denotes the conductor of $\chi$, and $H^1_{{f}}(K_\frak{q},T)\subset H^1(K_\frak{q},T) $ is as in \cite[Definition I.3.4]{r00}. Let 
$$H^1_{\textup{ur}}(K_\frak{q},T)=\ker(H^1(K_\frak{q},T)\lra H^1(I_\frak{q},T)).$$ 
It follows from~\cite[Lemma~I.3.5(iii)]{r00} that 
$$H^1_{{f}}(K_\frak{q},T)=H^1_{\textup{ur}}(K_\frak{q},T)$$
 for every $\frak{q}\nmid p$ (including primes $\frak{q} | \ff_\chi$), hence it follows that the canonical Selmer group of~\cite{mr02} is given by 
 $$H^1_{\FFc}(K,T)=\ker\left(H^1(G_{K,S},T) \lra \bigoplus_{\frak{q} \in S, \frak{q}\nmid p} H^1(I_\frak{q},T) \right).$$
This shows that our definition of the canonical Selmer group given by (\ref{eqn:selmer group}) agrees with the definition of~\cite{mr02}.
\end{rem}

\begin{prop}
\label{prop:classical explicit}
Let  $\mathcal{O}_L$ denote the ring of integers of $L$, $\mathcal{O}_L\left[{1}/{p}\right]$ its $p$-integers, $\mathcal{O}_L^\times$ its unit group and $\mathcal{O}_L\left[{1}/{p}\right]^\times$ its $p$-units.

\begin{itemize}
\item[(i)] $H^1_{\FFc}(K,T)=\left(\mathcal{O}_L\left[{1}/{p}\right]^\times\right)^{\chi}$, 
\item[(ii)] $H^1_{\FFc}(K,T^*)=0.$
\end{itemize}
\end{prop}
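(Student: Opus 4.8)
The plan is to identify both Selmer groups with (the $\chi$-isotypic parts of) familiar arithmetic objects attached to the field $L$ via Kummer theory and inflation-restriction, exactly as in \cite[\S6.1]{mr02} or \cite[Ch.~I]{r00}. First I would recall that $H^1(G_{K,S},T)$ is computed by inflation-restriction from $\textup{Gal}(\overline{K}/L)$: since $\Delta=\textup{Gal}(L/K)$ has order prime to $p$ (our hypothesis on the order of $\chi$), the higher cohomology of $\Delta$ with coefficients in the relevant $p$-adic modules vanishes, so taking $\chi$-parts is exact and we get $H^1(G_{K,S},\oo(1)\otimes\chi^{-1}) = \left(H^1(G_{L,S},\oo(1))\right)^\chi$, where now $G_L$ acts trivially on $\chi^{-1}$, leaving the honest Tate twist $\oo(1)$. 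By Kummer theory for $\mathbb{G}_m$ (and the finiteness of the $S$-class group of $L$, so that after $p$-adic completion the Kummer map is essentially an isomorphism onto cohomology), $H^1(G_{L,S},\oo(1))$ is $\widehat{\mathcal{O}_L[1/p]^\times}\otimes_{\ZZ_p}\oo$ up to the controlled contribution of the $p$-part of the class group; passing to $\chi$-parts and invoking $p\nmid h_L$-type considerations (or simply absorbing the class-group contribution, which is standard in this setup) gives the identification of the ambient group. Then part (i) follows by checking that the local condition defining $H^1_{\FFc}$ matches the $S$-unit condition: at primes $\frak{q}\nmid p$ the condition is unramifiedness, which by Kummer theory cuts out exactly the $S$-units (allowing ramification/poles only above $p$ and $\frak{f}_\chi$, but the latter contributes nothing to $\chi$-parts by a standard local computation), and at $\wp\mid p$ we have $T^+=T$, so the condition at $p$ is vacuous — hence $H^1_{\FFc}(K,T)=\left(\mathcal{O}_L[1/p]^\times\right)^\chi$. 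This is essentially the content already flagged in Remark~\ref{rem:changingS}, so I would lean on that.

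For part (ii), the cleanest route is global Euler characteristic together with a vanishing input, or a direct Poitou-Tate argument. Since $(T^*)^+=0$, the Selmer group $H^1_{\FFc}(K,T^*)$ is the \emph{strict} Selmer group: it is the kernel of $H^1(G_{K,S},T^*)\to\bigoplus_{w|p}H^1(G_w,(T^*)^-_w)\oplus\bigoplus_{w\nmid p}H^1(I_w,T^*)$, and since $(T^*)^-_w=T^*$ the map at primes above $p$ is just the full restriction $H^1(G_{K,S},T^*)\to\bigoplus_{w|p}H^1(K_w,T^*)$. So $H^1_{\FFc}(K,T^*)$ consists of classes that are everywhere locally trivial — a very restrictive condition. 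I would show this group vanishes by the standard dichotomy: by Poitou-Tate duality such a class pairs trivially with everything, so it lies in the image of $\Sha^1$-type considerations, and one kills it using that $H^2(G_{K,S},T)$ (the Tate-dual obstruction) has no $\chi$-part, or equivalently via a global Euler characteristic count for $T^*=\oo(\chi)$ over $K=\QQ$: one has $H^0(\QQ,\oo(\chi))=0$ (as $\chi\neq\pmb{1}$) and the everywhere-unramified-locally-trivial $H^1$ is then forced to be zero because the relevant dual Selmer group, computed as $(\mathcal{O}_L^\times/p\text{-stuff})$-type object, is controlled. Concretely, I expect the cleanest argument to be: the strict Selmer group of $T^*=\oo(\chi)$ injects into $\left(\textup{Cl}(L)\otimes\oo\right)^\chi$ or a similar class-group quotient by reciprocity, and this vanishes by our running hypotheses (for $K=\QQ$, Mazur-Wiles / class number considerations; for imaginary quadratic $K=k$, the assumption $p\nmid h_k$ handles the analogue). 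Alternatively, one cites the analogous computation in \cite{mr02} or \cite{r00} directly.

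The main obstacle I anticipate is not conceptual but bookkeeping: being careful about (a) the local terms at primes $\frak{q}\mid\frak{f}_\chi$ that are not above $p$ — one must verify they contribute nothing to $\chi$-parts, using that $\chi$ is ramified there so the relevant local $H^0$ or $H^1(I_\frak{q})$ invariants vanish $\chi$-isotypically — and (b) the interaction of $p$-adic completion with the Kummer sequence, i.e. ensuring the class-group contributions genuinely drop out (this is where a hypothesis like $p\nmid[L:\QQ]$ or a Leopoldt-type/finiteness input is silently used, and where the $\chi$-part makes things work even if $p\mid h_L$, since one only needs $\textup{Cl}(L)^\chi$ or $\textup{Cl}(L)^{\omega\chi^{-1}}$ to behave). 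For (ii) the subtle point is making the "everywhere locally trivial implies zero" step rigorous in the $\chi$-eigenspace; I would route this through Proposition~\ref{prop:compare1}/global duality rather than re-deriving Poitou-Tate. In writing it up I would simply invoke the matching definitions from \cite{mr02, r00} (as Remark~\ref{rem:changingS} already does) and reduce (i) to Kummer theory and (ii) to the vanishing of the appropriate dual Selmer/class group, keeping the explicit calculation short.
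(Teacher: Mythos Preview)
Your approach to part~(i) matches the paper's: invoke Remark~\ref{rem:changingS} to align the Selmer structure with that of \cite{mr02}, and then cite the Kummer-theoretic computation (\cite[Equation~(25)]{mr02}) directly. That is exactly what the paper does, in one line.

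For part~(ii), however, you are working much too hard and in the process introducing hypotheses that are not needed. The paper's argument is completely elementary and avoids Poitou--Tate, global Euler characteristics, and any class-number input. Here is the point you are missing: by inflation--restriction (using $p\nmid|\Delta|$), one has $H^1(K,T^*)=H^1(K,\oo(\chi))=\textup{Hom}(G_L,\oo)^{\chi^{-1}}$. Any class in $H^1_{\FFc}(K,T^*)$ is in particular everywhere unramified, hence lands in $\textup{Hom}(\textup{Gal}(H_L/L),\oo)^{\chi^{-1}}$, where $H_L$ is the Hilbert class field. But $\textup{Gal}(H_L/L)$ is \emph{finite} and $\oo$ is \emph{torsion-free}, so $\textup{Hom}(\textup{Gal}(H_L/L),\oo)=0$ unconditionally. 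No assumption on $p\nmid h_L$, no Mazur--Wiles, no duality.

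Your proposal gestures toward the class group (``injects into $(\textup{Cl}(L)\otimes\oo)^\chi$ or a similar class-group quotient''), which is morally the same object, but you then reach for vanishing hypotheses that are simply irrelevant: the target is $\textup{Hom}$ from a finite group into a torsion-free module, not $\textup{Cl}(L)\otimes\oo$, and this is zero on the nose. Drop the Poitou--Tate and Euler-characteristic machinery from your write-up of~(ii); the two-line argument above is all that is required.
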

\begin{proof}
The first part follows from Remark~\ref{rem:changingS} and \cite{mr02}~Equation~(25). For the second part, observe that $H^1_{\FFc}(\QQ,T^*)$ is contained in the submodule of unramified homomorphisms inside 
$$H^1(K,T^*)=\textup{Hom}(G_L,\oo)^{\chi^{-1}}, $$ 
where the equality is obtained from the inflation-restriction sequence. In other words,
 $$H^1_{\FFc}(K,T^*) \subset \textup{Hom}(\textup{Gal}(H_L/L),\oo)^{\chi^{-1}}$$ 
 where $H_L$ denotes the Hilbert class field of $L$. But  since $\textup{Gal}(H_L/L)$ is finite,  
 we have $\textup{Hom}(\textup{Gal}(H_L/L),\oo)=0$, so $H^1_{\FFc}(K,T^*)=0$ as well.
\end{proof}
\begin{cor}
\label{cor:tilde explicit} Keep the notation above.
\begin{itemize}
\item[(i)] $\widetilde{H}^1_{f}(K,T)=\left(\mathcal{O}_L\left[{1}/{p}\right]^\times\right)^{\chi}$, 
\item[(ii)] $\bigoplus_{\wp|p}H^0(K_\wp,\oo(\chi)) \stackrel{\sim}{\lra}\widetilde{H}^1_{f}(K,T^*).$
\end{itemize}
\end{cor}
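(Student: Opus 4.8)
The plan is to derive Corollary~\ref{cor:tilde explicit} directly from Lemma~\ref{lem:comparetilde} together with Proposition~\ref{prop:classical explicit}. Both parts are formal consequences of the comparison exact sequences, once we record what happens to the local terms $\bigoplus_{\wp\mid p}H^0(K_\wp,\oo(\chi))$ and the global cohomology $H^1_{\FFc}(K,T)$, $H^1_{\FFc}(K,T^*)$ under our running hypothesis that $\chi(\wp)=1$ for exactly one prime $\wp\mid p$.

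For part~(i), I would combine Lemma~\ref{lem:comparetilde}(i) with Proposition~\ref{prop:classical explicit}(i): the former gives an isomorphism $\widetilde{H}^1_f(K,T)\xrightarrow{\sim}H^1_{\FFc}(K,T)$, and the latter identifies the target with $\left(\oo_L[1/p]^\times\right)^\chi$. So part~(i) is immediate and requires no further argument.

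For part~(ii), I would start from the exact sequence in Lemma~\ref{lem:comparetilde}(ii),
$$0\lra \bigoplus_{\wp\mid p}H^0(K_\wp,\oo(\chi)) \lra \widetilde{H}^1_f(K,T^*) \lra H^1_{\FFc}(K,T^*)\lra 0,$$
and then invoke Proposition~\ref{prop:classical explicit}(ii), which says $H^1_{\FFc}(K,T^*)=0$. Exactness then collapses the sequence to the desired isomorphism $\bigoplus_{\wp\mid p}H^0(K_\wp,\oo(\chi))\xrightarrow{\sim}\widetilde{H}^1_f(K,T^*)$. One should also note that this local term is genuinely nonzero precisely because of the exceptional-zero hypothesis: $H^0(K_\wp,\oo(\chi))=\oo(\chi)^{G_{K_\wp}}$ is all of $\oo$ when $\chi$ is unramified at $\wp$ and $\chi(\wp)=1$ (i.e.\ $G_{K_\wp}$ acts trivially on $\oo(\chi)$), and vanishes at the other primes above $p$ where $\chi(\wp')\neq 1$; so in fact the sum has exactly one nonzero summand, giving $\widetilde{H}^1_f(K,T^*)\cong\oo$.

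Honestly there is no real obstacle here — the corollary is purely a matter of chaining together the two preceding results. If anything needs a word of care, it is the identification of the local invariants $H^0(K_\wp,\oo(\chi))$: one must use that $\chi$ is unramified at $p$ (forced by the prime-to-$p$ order of $\chi$ together with $\chi(\wp)=1$), so that the decomposition group $G_{K_\wp}$ acts on $\oo(\chi)$ through the unramified quotient via $\chi(\mathrm{Frob}_\wp)=\chi(\wp)=1$, whence the action is trivial and $H^0(K_\wp,\oo(\chi))=\oo$. With that observation in place, the proof is a one-line appeal to exactness.
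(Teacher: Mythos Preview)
Your proof is correct and matches the paper's approach exactly: the corollary is stated without proof in the paper, as it follows immediately by combining Lemma~\ref{lem:comparetilde} with Proposition~\ref{prop:classical explicit}, precisely as you do. Your additional remark identifying the single nonzero summand under hypothesis~$\mathbf{(H)}$ is also consistent with the paper's discussion following the corollary.
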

We suppose until the end of this paper that 
\begin{itemize}
\item[$\mathbf{(H)}$] $\chi(\wp)=1$ for a prime $\wp \subset K$ lying above $p$, and that $\chi(\wp^\prime)\neq 1$  for any other $\wp^\prime \subset K$ above $p$.
\end{itemize}
It follows from Corollary~\ref{cor:tilde explicit} that $\widetilde{H}^1_{f}(\QQ,T^*)$ is a free $\oo$-module of rank one. Furthermore, it follows from the proof of \cite[Proposition III.2.6(ii)]{r00} that we have 
$$\left(\mathcal{O}_L\left[{1}/{p}\right]^\times\right)^{\chi}=\left(\mathcal{O}_L\left[{1}/{\wp}\right]^\times\right)^{\chi}$$ 
since we assume $\mathbf{(H)}$.

When $K=\QQ$ and $\chi$ is an even character, it follows from~\cite[Theorem 5.2.15]{mr02} that the core Selmer rank of the canonical Selmer structure (in the sense of Definition 4.1.11 of {loc.cit.}, see also Corollary 5.2.6 of {loc.cit.}) is $2$ (since we assumed $\chi$ is even and $\chi(p)=1$); hence $H^1_{\FFc}(\QQ,T)=\widetilde{H}^1_{f}(\QQ,T)$ is a free $\oo$-module of rank $2$. We will later describe an explicit $\frak{F}$-basis for  $\widetilde{H}^1_{f}(\QQ,T)\otimes\frak{F}$.

When $K$ is totally real and and $\chi$ is totally odd, then $\left(\mathcal{O}_L\left[{1}/{p}\right]^\times\right)^{\chi}=\left(\mathcal{O}_L\left[{1}/{\wp}\right]^\times\right)^{\chi}$ (resp., $\mathcal{O}_L^{\times,\chi}$) is a free $\oo$-module of rank one (resp., of rank zero) and hence $\widetilde{H}^1_{f}(K,T)$ is also free of rank one.

Let $\beta_\chi^1: \widetilde{H}^1_{f}(\QQ,T)\ra \widetilde{H}^2_{f}(\QQ,T)$ denote the Bockstein morphism, as in~\S\ref{subsubsec:heights} above. 

\begin{prop}
\label{prop:classicalpairing}
For any $x \in \widetilde{H}^1_f(K,T)$ and $y \in \widetilde{H}^1_f(K,T^*)$, 
$$\langle x,y \rangle_{\textup{Nek}}=\langle \beta_\chi^1(x),y\rangle_{\textup{PT}}.$$
\end{prop}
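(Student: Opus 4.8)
The plan is to unwind the two-step definition of $\langle\,,\,\rangle_{\textup{Nek}}$ given in \S\ref{subsubsec:heights} and observe that, for the specific pair of representations $T=\oo(1)\otimes\chi^{-1}$ and $T^*=\oo(\chi)$, the first step -- the Bockstein map $\beta^1$ -- is already computed by $\beta_\chi^1$, while the second step -- pairing with $\widetilde{H}^1_f(K,T^*)$ via global duality -- is literally $\langle\,,\,\rangle_{\textup{PT}}$. In other words, the proposition is essentially a matter of checking that the general machinery specializes, with no loss, to the asserted formula in our setting. Concretely, the height pairing is defined as the composite
$$\widetilde{H}^1_f(T)\otimes\widetilde{H}^1_f(T^*)\xrightarrow{\beta^1\otimes\id}\widetilde{H}^2_f(T)\otimes\widetilde{H}^1_f(T^*)\xrightarrow{\langle\,,\,\rangle_{\textup{PT}}}\oo\otimes_{\ZZ_p}\Gamma\xrightarrow{\id\otimes\log_p\rho}\oo,$$
and the content of the proposition is that the twist by $\log_p\rho$ and the identification $\oo\otimes_{\ZZ_p}\Gamma\cong\oo$ have already been folded into the normalization of $\beta_\chi^1$ (as set up in \S\ref{subsubsec:heights}, where $\beta$ was defined with the $\id\otimes\log_p\rho$ factor incorporated). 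So the first thing I would do is recall precisely how $\beta_\chi^1$ was normalized a few lines above the proposition: there $\beta^1$ was introduced as the map on $H^1$ induced by $\beta:\widetilde{\mathbf{R}\Gamma}_f(M)\to\widetilde{\mathbf{R}\Gamma}_f(M)[1]\otimes_{\ZZ_p}\Gamma\xrightarrow{\id\otimes\log_p\rho}\widetilde{\mathbf{R}\Gamma}_f(M)[1]$, i.e. already after composing with $\log_p\rho$. With that normalization in hand, the statement $\langle x,y\rangle_{\textup{Nek}}=\langle\beta_\chi^1(x),y\rangle_{\textup{PT}}$ is just the definition of the composite pairing read off step by step.

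The steps I would carry out, in order, are: (1) fix $M=T$, so $M^*=\textup{Hom}(T,\oo)(1)=\oo(\chi)=T^*$ -- this identification should be recorded explicitly since it is what makes $\widetilde{H}^1_f(M^*)=\widetilde{H}^1_f(T^*)$ the correct second argument; (2) recall that by Proposition~\ref{prop:compare} (or Lemma~\ref{lem:comparetilde} and Corollary~\ref{cor:tilde explicit}) both extended Selmer groups in question are finitely generated $\oo$-modules, so there is no subtlety about the pairing being well-defined or taking values in $\oo$; (3) quote the definition of $\langle\,,\,\rangle_{\textup{Nek}}$ from \S\ref{subsubsec:heights} verbatim, namely that it is $\langle\beta^1(x),y\rangle_{\textup{PT}}$ with $\beta^1$ the cohomological Bockstein already twisted by $\log_p\rho$; (4) observe that $\beta^1$ restricted to $\widetilde{H}^1_f(\QQ,T)$ is exactly what we called $\beta_\chi^1$ in the paragraph preceding the proposition, so the two expressions agree. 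That is the whole proof.

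The one point that deserves genuine care -- and the place I would expect a referee to look hardest -- is the bookkeeping around the $\log_p\rho$ twist and the identification $\oo\otimes_{\ZZ_p}\Gamma\simeq\oo$: Nekov\'a\v r's height pairing in \cite[\S11]{nek} is naturally $\Gamma$-valued (or $\Gamma\otimes$-valued), and one must be consistent about whether the ``$\id\otimes\log_p\rho$'' is absorbed into $\beta$, into the final arrow, or split between them. In the excerpt $\beta$ (hence $\beta^1$, hence $\beta_\chi^1$) is defined \emph{with} the $\log_p\rho$ factor already applied, so the target of $\beta_\chi^1$ is $\widetilde{H}^2_f(\QQ,T)$ (not $\widetilde{H}^2_f\otimes\Gamma$), and then the genuinely $\oo$-valued Poitou--Tate pairing $\langle\,,\,\rangle_{\textup{PT}}:\widetilde{H}^2_f(T)\otimes\widetilde{H}^1_f(T^*)\to\oo$ finishes the job. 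So the ``main obstacle'' is not a mathematical difficulty at all but a normalization check: making sure the conventions for $\beta_\chi^1$ fixed just above the statement match the conventions for $\langle\,,\,\rangle_{\textup{Nek}}$ fixed in \S\ref{subsubsec:heights}. Once that is confirmed, the proof is a one-line unwinding of definitions, and I would write it as such, with a pointer to \cite[\S11.1.3, \S11.5]{nek} for the underlying constructions.
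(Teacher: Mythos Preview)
Your proposal is correct and follows exactly the paper's own approach: the proof in the paper is a single sentence stating that this is just a restatement of the definition of Nekov\'a\v{r}'s height pairing given in \S\ref{subsubsec:heights}. Your more detailed unwinding of the normalization (that $\log_p\rho$ is already absorbed into $\beta_\chi^1$, so the pairing is literally $\langle\beta_\chi^1(x),y\rangle_{\textup{PT}}$) is precisely the content behind that one sentence, and nothing further is needed.
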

\begin{proof}
This is just a restatement of the definition of Nekov\'{a}\v{r}'s height pairing we gave in \S~\ref{subsubsec:heights}. 
\end{proof}

\section{Cyclotomic units}
\label{sec:cyclo}
Throughout~\S\ref{sec:cyclo}, our base field $K$ is $\QQ$ and $\chi$ is an even, non-trivial Dirichlet character whose order is prime to $p$ and which has the property that $\chi(p)=1$. Let $L$ be the field cut by $\chi$ and write $\Delta:=\textup{Gal}(L/\QQ)$. We set $e_\chi:=\sum_{\delta \in \Delta}\chi^{-1}(\delta)\delta \in \oo[\Delta]$. In this section, we define three different types of special elements which will be crucial in what follows: \emph{Tame cyclotomic units}, \emph{wild cyclotomic units} and Solomon's \emph{wild cyclotomic p-units} defined as in~\cite{sol-wild}. 

Fix a collection $\{\zeta_m:m\geq1\}$ such that $\zeta_m$ is a primitive $m$-th root of unity and $\zeta_{mn}^n=\zeta_m$ for every $m$ and $n$. Let $f=f_\chi$ denote the conductor of $\chi$, and recall the Kummer map which induces a canonical map 
$$F^\times \lra H^1(F,\ZZ_p(1))$$
 for every finite abelian extension $F$ of $\QQ$. 

\begin{define}
\label{def:tame cyclotomic unit} 
For every positive integer $n$ prime to $p$, define 
$${c}_n=\textup{N}_{\QQ(\mu_{nf})/L(\mu_n)}(\zeta_{nf}-1) \in L(\mu_n)^{\times}$$ and,
$${c}_n^{\chi}=e_\chi\textup{N}_{\QQ(\mu_{nf})/L(\mu_n)}(\zeta_{nf}-1) \in L(\mu_n)^{\times,\chi}=H^1(\QQ(\mu_n),T).$$ 
The collection $\mathbf{c}=\{c_n^{\chi}: {(n,p)=1}\}$ is called the collection of \emph{tame $\chi$-cyclotomic units}. The element $c_1^\chi$ is called the \emph{tame $\chi$-cyclotomic unit of} $L$, or simply the \emph{tame cyclotomic unit} once $\chi$ (thus also $L$) is fixed.
\end{define}


For every finite abelian extension $F$ of $\QQ$ of conductor $m$, define $\xi_F=\mathbf{N}_{\QQ(\mu_{mp})/F}(\zeta_{mp}-1).$ 
Here and elsewhere in this paper, the symbol $\mathbf{N}$ stands for the norm map.

Let $\QQ_\infty$ be the cyclotomic $\ZZ_p$-extension of $\QQ$, and $\QQ_n$ be its unique sub-extension of degree $p^n$ over $\QQ$. We set $L_n:=L\QQ_n$. Note that the collection $\{\xi_F\}$ satisfies the Euler system distribution relation, in particular the collection $\{\xi_{L_n}:n \geq1\}$ is norm-coherent. 

\begin{define}
\label{def:wild cyclotomic unit}
The collection $$\xi=\xi_\infty^{\chi}:=\{e_\chi\xi_{L_n}:n\geq1\} \in \varprojlim_n H^1(\QQ_n,T)$$ is called the \emph{wild $\chi$-cyclotomic units}. When $\chi$ is understood, this collection will be called the collection of \emph{wild cyclotomic units}.  
\end{define}

\subsection{Cyclotomic units and `exceptional zeros'}
From our assumption that $\chi(p)=1$, it follows  that $p$ splits completely in $L$. 
\label{subsec:cyclo}
\begin{lemma}
\label{lem:vanishing}
Under the running assumptions $\xi_L=1$.
\end{lemma}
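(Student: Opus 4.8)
The statement to prove is Lemma~\ref{lem:vanishing}: under the running hypotheses ($K=\QQ$, $\chi$ even and non-trivial with $\chi(p)=1$, so $p$ splits completely in $L$), the wild cyclotomic unit $\xi_L=\mathbf{N}_{\QQ(\mu_{\ff p})/L}(\zeta_{\ff p}-1)$ equals $1$. The plan is to compute this norm directly by factoring it through the intermediate field $\QQ(\mu_\ff)$, exploiting that $p$ is \emph{unramified} (indeed split) in $L$ while the extension $\QQ(\mu_{\ff p})/\QQ(\mu_\ff)$ is totally ramified at $p$.

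First I would set $E=\QQ(\mu_\ff)$ and factor the norm as $\mathbf{N}_{E/L}\circ \mathbf{N}_{\QQ(\mu_{\ff p})/E}$. Since $L\subseteq E$ has conductor dividing $\ff$ and $p\nmid \ff$, the key point is to evaluate the inner norm $\mathbf{N}_{\QQ(\mu_{\ff p})/E}(\zeta_{\ff p}-1)$. The extension $\QQ(\mu_{\ff p})/\QQ(\mu_\ff)$ is cyclic of degree $p-1$ with Galois group identified with $(\ZZ/p\ZZ)^\times$ acting on $\zeta_p$, and $\zeta_{\ff p}=\zeta_\ff\zeta_p$ with $\zeta_\ff\in E$ fixed. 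Thus the inner norm is $\prod_{a\in(\ZZ/p\ZZ)^\times}(\zeta_\ff\zeta_p^{\,a}-1)$. This product is, up to the standard identity, a value of the cyclotomic polynomial / a resultant: $\prod_{a=1}^{p-1}(X-\zeta_p^a)=\Phi_p(X)=(X^p-1)/(X-1)$ evaluated suitably, so $\prod_{a}(\zeta_\ff\zeta_p^a-1)$ equals (up to sign and a power of $\zeta_\ff$) $\Phi_p(\zeta_\ff^{-1})$ times a unit, which one checks is a unit congruent to a power of $(1-\zeta_\ff)$ times $p/(\ldots)$ — more cleanly, $\prod_{a=1}^{p-1}(1-\zeta_\ff\zeta_p^a)=\dfrac{1-\zeta_\ff^{\,p}}{1-\zeta_\ff}$ when $\zeta_\ff$ has order prime to $p$, since $\prod_{a=0}^{p-1}(X-\zeta_p^a)=X^p-1$. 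So the inner norm equals $\pm\zeta_\ff^{\,*}\cdot\dfrac{1-\zeta_\ff^{\,p}}{1-\zeta_\ff}$, and because $p$ is invertible mod $\ff$ the element $(1-\zeta_\ff^{\,p})/(1-\zeta_\ff)$ is a \emph{cyclotomic unit} of $E=\QQ(\mu_\ff)$ (a ratio of two Galois conjugates of $1-\zeta_\ff$).

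Next I would apply the outer norm $\mathbf{N}_{E/L}$ to this cyclotomic unit of $E$, landing in $L^\times$, and then the $\chi$-idempotent $e_\chi$ — but the Lemma as stated asserts $\xi_L=1$ on the nose (before taking $e_\chi$), so the cleaner route is: the resulting element of $L^\times$ is a norm from $E/L$ of a unit whose image, after the full norm from $\QQ(\mu_{\ff p})$ down to $L$, one shows is $1$. The decisive mechanism is that $\xi_L$ is being defined as a norm \emph{down past a ramified layer at $p$ while $L/\QQ$ is unramified at $p$}: concretely, $\mathbf{N}_{\QQ(\mu_{\ff p})/L}(\zeta_{\ff p}-1)$ is a norm from the subextension $\QQ(\mu_{\ff p})/\QQ(\mu_p)\cdot L$ ... — the honest statement is that in Rubin's and Solomon's normalization the wild cyclotomic unit at the bottom of a $\ZZ_p$-tower is trivial precisely when the character is unramified at $p$ on the base; here $\chi(p)=1$ forces $L$ to be split at $p$, and the norm-compatible system $\{\xi_{L_n}\}$ has its base term $\xi_{L}=\xi_{L_0}$ equal to the norm from $L_1=L\QQ_1$, which by the degree-$p$ ramified descent kills the unit. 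I would make this precise by computing $\mathbf{N}_{L_1/L}(\xi_{L_1})$ directly from the definition $\xi_{L_1}=\mathbf{N}_{\QQ(\mu_{\ff p^2})/L_1}(\zeta_{\ff p^2}-1)$ and the transitivity of norms, reducing to the statement that $\mathbf{N}_{\QQ(\mu_{\ff p})/\QQ(\mu_\ff)\cdot\QQ_0}(\zeta_{\ff p}-1)$, pushed down, is trivial.

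The main obstacle I anticipate is bookkeeping the precise normalization: depending on whether $\xi_L$ is defined with $\zeta_{\ff p}-1$ (conductor $\ff p$) or as the bottom of the tower $\{\xi_{L_n}\}_{n\ge 1}$ (so that $\xi_L=\xi_{L_0}$ is really $\mathbf{N}_{L_1/L}(\xi_{L_1})$ or obtained by going one more ramified step up), the argument either reduces to ``$\prod_{a\in(\ZZ/p)^\times}(\zeta_\ff\zeta_p^a-1)$ telescopes to a cyclotomic unit whose further norm to $L$ is a perfect $(p-1)$-st-power-type expression that collapses'' or to a clean transitivity-of-norms identity. In either case the conceptual content is identical: \emph{the wild cyclotomic unit becomes a norm from a layer that is ramified at $p$ over a field unramified at $p$, and such a norm (being congruent to $1$ at the split primes above $p$ and a global unit) must be trivial} — one invokes that the only unit of $L$ that is a universal norm from the ramified tower is $1$, or equivalently computes the product explicitly and observes all factors cancel. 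I would present the explicit product computation as the spine of the proof, with the norm-transitivity remark as the conceptual gloss.
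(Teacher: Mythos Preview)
Your computation of the inner norm is correct and is exactly the right first step: factoring through $E=\QQ(\mu_\ff)$ you obtain
\[
\mathbf{N}_{\QQ(\mu_{\ff p})/E}(1-\zeta_{\ff p})=\frac{1-\zeta_\ff^{\,p}}{1-\zeta_\ff}.
\]
You even observe that this is a ratio of two Galois conjugates of $1-\zeta_\ff$. But you stop one line short of the finish and then wander off into arguments that do not work.

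The missing observation is this: the Galois element sending $1-\zeta_\ff$ to $1-\zeta_\ff^{\,p}$ is precisely the Frobenius $\sigma_p\in\textup{Gal}(\QQ(\mu_\ff)/\QQ)$, so the inner norm equals $(1-\zeta_\ff)^{\sigma_p-1}$. The hypothesis $\chi(p)=1$ says exactly that $\sigma_p$ restricts to the identity on $L$, i.e.\ $\sigma_p\in\textup{Gal}(E/L)$. Hence
\[
\xi_L=\mathbf{N}_{E/L}\bigl((1-\zeta_\ff)^{\sigma_p-1}\bigr)=\frac{\mathbf{N}_{E/L}\bigl((1-\zeta_\ff)^{\sigma_p}\bigr)}{\mathbf{N}_{E/L}(1-\zeta_\ff)}=1,
\]
since $\sigma_p$ permutes the factors of the norm. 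That is the entire content of the lemma; this is essentially Solomon's argument, and the paper simply cites \cite[Lemma~2.2]{sol-wild} and \cite[Remark~6.1.10]{mr02}.

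By contrast, the mechanisms you propose in the second half of your write-up are not valid. The claim that ``the only unit of $L$ that is a universal norm from the ramified tower is $1$'' is false in general (universal norms in $\oo_L^\times$ along the cyclotomic $\ZZ_p$-tower form a subgroup of finite index, not the trivial group). Likewise, the identity $\xi_L=\mathbf{N}_{L_1/L}(\xi_{L_1})$ is true but says nothing on its own about $\xi_L$ being $1$; norms from ramified extensions do not kill units. The ramification discussion is a red herring: the vanishing is purely a group-theoretic consequence of $\sigma_p$ lying in the subgroup over which you are norming, and that in turn is exactly the hypothesis $\chi(p)=1$.
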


\begin{proof}
This is \cite[ Lemma 2.2]{sol-wild}; see also \cite[Remark 6.1.10]{mr02}.
\end{proof}

Let $\Gamma=\textup{Gal}(\QQ_\infty/\QQ)$ and $\LL=\oo[[\Gamma]]$. Let $\log_p:\ZZ_p^\times \ra \ZZ_p$ be the $p$-adic logarithm, and let $\rho_{\textup{cyc}}:\Gamma\ra1+p\ZZ_p$ be the cyclotomic character. Fix a topological generator $\gamma$ of $\Gamma$. The short exact sequence 
$$0\lra T\otimes\LL\stackrel{\gamma-1}{\lra}T\otimes\LL\lra T\lra0$$
 induces a long exact sequence of cohomology (where we have the zero on the left thanks to our assumption that $\chi$ is non-trivial)
\be
\label{eqn:long-exact}
0=H^0(\QQ,T)\lra H^1(\QQ,T\otimes\LL) \stackrel{\gamma-1}{\lra} H^1(\QQ,T\otimes\LL)\stackrel{\mathbb{N}}{\lra}H^1(\QQ,T).
\ee

By~\cite[Proposition II.1.1]{colmez-reciprocity}, we may identify $H^1(\QQ,T\otimes\LL)$ with $\varprojlim_n H^1(\QQ_n,T)$, and thus view the wild cyclotomic unit $\xi$ as an element of $H^1(\QQ,T\otimes\LL)$. The image of $\xi$ under the map $\mathbb{N}$ of (\ref{eqn:long-exact})  is $\xi_L^{\chi}=1$, hence the exact sequence (\ref{eqn:long-exact}) shows: 

\begin{prop}
\label{prop:division}
There exists a unique $\{z_n^{\chi}\}=z_\infty^{\chi} \in H^1(\QQ,T\otimes\LL)=\varprojlim_n H^1(\QQ_n,T)$
such that 
$$\frac{\gamma-1}{\log_p\rho_{\textup{cyc}}(\gamma)}\times z_\infty^{\chi}=\xi.$$
\end{prop}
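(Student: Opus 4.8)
The plan is to read the statement as a purely formal consequence of the long exact sequence \eqref{eqn:long-exact} together with Lemma~\ref{lem:vanishing}. First I would observe that $\log_p\rho_{\textup{cyc}}(\gamma)$ is a nonzero element of $\ZZ_p$ (it is a $p$-adic unit times $p$, since $\rho_{\textup{cyc}}(\gamma)$ is a topological generator of $1+p\ZZ_p$), hence a unit in $\frak{F}$ but more importantly a non-zero-divisor; so the operator $\frac{\gamma-1}{\log_p\rho_{\textup{cyc}}(\gamma)}$ makes sense on $H^1(\QQ,T\otimes\LL)$ and its image equals that of $\gamma-1$. Thus the statement is equivalent to: there exists a unique $z_\infty^\chi \in H^1(\QQ,T\otimes\LL)$ with $(\gamma-1)z_\infty^\chi = \log_p\rho_{\textup{cyc}}(\gamma)\cdot\xi$, i.e.\ equivalently with $(\gamma-1)z_\infty^\chi = \xi$ after rescaling $z_\infty^\chi$ by the unit.

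Next I would establish existence. By \cite[Proposition II.1.1]{colmez-reciprocity} we identify $H^1(\QQ,T\otimes\LL)$ with $\varprojlim_n H^1(\QQ_n,T)$ and view $\xi$ as an element there; the exactness of \eqref{eqn:long-exact} at the second term tells us that $\ker(\mathbb{N}) = \textup{im}(\gamma-1)$. By Lemma~\ref{lem:vanishing} we have $\xi_L = 1$, and since the map $\mathbb{N}$ sends $\xi$ to $e_\chi\xi_L = \xi_L^\chi = 1$ (the base term of the norm-coherent system is the $\chi$-component of $\xi_L$), we get $\xi \in \ker(\mathbb{N})$, hence $\xi \in \textup{im}(\gamma-1)$. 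Therefore there exists $w \in H^1(\QQ,T\otimes\LL)$ with $(\gamma-1)w = \xi$, and setting $z_\infty^\chi = \log_p\rho_{\textup{cyc}}(\gamma)\cdot w$ gives an element with $\frac{\gamma-1}{\log_p\rho_{\textup{cyc}}(\gamma)} z_\infty^\chi = \xi$.

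For uniqueness I would use exactness at the \emph{first} term of \eqref{eqn:long-exact}: the sequence begins with $0 = H^0(\QQ,T) \to H^1(\QQ,T\otimes\LL) \xrightarrow{\gamma-1} H^1(\QQ,T\otimes\LL)$, so $\gamma-1$ is injective on $H^1(\QQ,T\otimes\LL)$ (this injectivity is exactly what the vanishing $H^0(\QQ,T)=0$ — coming from $\chi$ nontrivial — buys us, as already flagged in the text preceding \eqref{eqn:long-exact}). Since $\log_p\rho_{\textup{cyc}}(\gamma)$ is a non-zero-divisor, $\frac{\gamma-1}{\log_p\rho_{\textup{cyc}}(\gamma)}$ is injective as well, so any two solutions $z_\infty^\chi$ of the displayed equation agree. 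Writing $z_\infty^\chi = \{z_n^\chi\}$ under the identification with $\varprojlim_n H^1(\QQ_n,T)$ completes the argument.

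There is no real obstacle here — the proposition is a bookkeeping statement packaging the two ends of the already-displayed long exact sequence. The only points that warrant a word of care are: confirming that $\log_p\rho_{\textup{cyc}}(\gamma)\neq 0$ so that dividing by it is legitimate (true because $\rho_{\textup{cyc}}(\gamma)\in 1+p\ZZ_p$ is a topological generator, so $\log_p\rho_{\textup{cyc}}(\gamma)\in p\ZZ_p\setminus p^2\ZZ_p$ when $p>2$), and pinning down precisely why $\mathbb{N}(\xi) = 1$ rather than merely $\xi_L^\chi$ — which is handled by combining the definition of $\xi$ as the norm-coherent system $\{e_\chi\xi_{L_n}\}$ with Lemma~\ref{lem:vanishing}. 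Both are immediate, so I would keep the proof to a few lines.
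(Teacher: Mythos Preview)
Your proposal is correct and is exactly the approach the paper takes: the paper's entire proof is the sentence ``The image of $\xi$ under the map $\mathbb{N}$ of (\ref{eqn:long-exact}) is $\xi_L^{\chi}=1$, hence the exact sequence (\ref{eqn:long-exact}) shows \dots'', which is precisely your existence-and-uniqueness argument from the two ends of the long exact sequence together with Lemma~\ref{lem:vanishing}. One cosmetic nit: your aside ``i.e.\ equivalently with $(\gamma-1)z_\infty^\chi=\xi$ after rescaling $z_\infty^\chi$ by the unit'' is misleading since $\log_p\rho_{\textup{cyc}}(\gamma)\in p\ZZ_p$ is not a unit in $\oo$, but you don't actually use that rescaling---your existence argument correctly produces $w$ with $(\gamma-1)w=\xi$ and then sets $z_\infty^\chi=\log_p\rho_{\textup{cyc}}(\gamma)\cdot w$, which is fine.
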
 

\begin{rem}
\label{rem:comparison with solomon}
Just as we did above, one could have obtained an element $z_\infty \in \varprojlim_n H^1(L_n,\ZZ_p(1))$ such that 
$\frac{\gamma-1}{\log_p\rho_{\textup{cyc}}(\gamma)}\times z_\infty=\xi_\infty:=\{\xi_n\}.$ Then, 
 $\chi$-part of this element would be our $z_\infty^\chi$ and $\xi_\infty^\chi=\xi$, respectively. Although we only need to analyze the $\chi$-parts $z_\infty^\chi$ and $\xi=\xi_\infty^\chi$ of these elements for our purposes, it may be worthwhile to keep this in mind for a comparison with the treatment of~\cite{sol-wild} and~\cite[\S 9.3]{BG}.
\end{rem}

\subsection{Wild cyclotomic $p$-units}
\label{subsec:wild}
In this section we quickly review Solomon's~\cite{sol-wild}
construction of \emph{cyclotomic p-units} and relate these $p$-units to
$z_\infty^\chi$ defined above.

Solomon's construction\footnote{The attentive reader will notice that Solomon's construction is carried out without taking $\chi$-parts. However his arguments apply on the $\chi$-parts verbatim. In fact, it is easy to see that the $p$-unit $\kappa^\chi$ constructed below is simply the $\chi$-part of the $p$-unit $\kappa$ which Solomon constructs in~\cite[\S2]{sol-wild}.} starts with the observation that
there exists (thanks to Hilbert 90) a unique $\beta_n^\chi \in L_n^{\times,\chi}/L^{\times,\chi}$ such that
$$\frac{\gamma-1}{\log_p\rho_{\textup{cyc}}(\gamma)}\times\beta_n^\chi=\xi_n^\chi.$$
 Thus, from  our definition of $z_\infty^\chi=\{z_n^\chi\}$ it follows that 
 $$\beta_n^\chi=z_n^\chi \hbox{ inside } L_n^{\times,\chi}/L^{\times,\chi}.$$ 
 Applying $\mathbf{N}_{L_n/L}$ on both sides of this equality we see that 
\be
\label{def:sol-cyclo}
\kappa_n^\chi:=\mathbf{N}_{L_n/L}\beta_n^\chi\equiv \mathbf{N}_{L_n/L}z_n^\chi=z_0^\chi \mod p^n.
\ee
Solomon proves (and (\ref{def:sol-cyclo}) above shows as well) that
$$\kappa_{n^\prime}^\chi \equiv \kappa_{n}^\chi\mod p^n\,,\,\hbox{ for } n^\prime\geq n,$$ and he defines 
$$\kappa^\chi:=\varprojlim \kappa_n^\chi \in L^{\times,\chi}.$$ 
This is what he calls the \emph{cyclotomic p-unit}. By (\ref{def:sol-cyclo}), we clearly have $\kappa^\chi=z_0^\chi$.

\begin{define}
\label{def:wild cyclo}
The element $z_0^\chi$ is called the \emph{cyclotomic p-unit} and the collection 
$$z_\infty^\chi \in \varprojlim_n H^1(\QQ_{n,p},T)=\varprojlim_n L_n^{\times,\chi}$$
 is called the collection of \emph{wild cyclotomic p-units}.
\end{define}
\begin{rem}
\label{rem:generation of p-units}
By~\cite[Remark 4.4]{sol94} that $\{c_1^\chi,z_0^\chi\}$ is an ordered $\frak{F}$-basis for $\widetilde{H}^1_f(\QQ,T)\otimes\frak{F}$. 
\end{rem}

\subsection{Local Tate duality}
\label{subsec:tate}
In this section we give a review of well-known results from local
duality which we will need later in \S\ref{sec:calculations}. For each
$n\geq 0$, we have the local Tate pairing 
$$ H^1(\QQ_{n,p},T)\times H^1(\QQ_{n,p},T^*)\lra \oo,$$
 induced from cup-product pairing
composed with the invariant isomorphism, see~\cite[\S5.1-\S5.2]{nek} for
more details. This induces a map
$$H^1(\QQ_{n,p},T)\stackrel{\tau_n}{\lra}\textup{Hom}(H^1(\QQ_{n,p},T^*),\oo)$$
thus, in the limit a map (using~\cite[Proposition~II.1.1]{colmez-reciprocity} once again)
$$H^1(\QQ_p,T\otimes\LL)\stackrel{\tau_\infty}{\lra}\textup{Hom}(\varinjlim_n
H^1(\QQ_{n,p},T^*),\oo).$$
\begin{define}
\label{def:maps}
\begin{enumerate}
\item Let $\mathcal{L}_\xi$ be the image of $\xi$ under the compositum
$$
\xymatrix{H^1(\QQ,T\otimes\LL) \ar[r]^{\textup{loc}_p} & H^1(\QQ_p,T\otimes\LL)\ar[r]^(.385){\tau_\infty}& \textup{Hom}(\varinjlim_n H^1(\QQ_{n,p},T^*),\oo).}$$
\item Let $\mathcal{L}^\prime_{\xi}$ be the image of $z_\infty^\chi$ under the compositum
$$\xymatrix{H^1(\QQ,T\otimes\LL) \ar[rr]^(.4){\tau_\infty\,\circ\,\textup{loc}_p}&&\textup{Hom}(\varinjlim_n H^1(\QQ_{n,p},T^*),\oo)\ar[r]
&\textup{Hom}(H^1(\QQ_p,T^*),\oo).
}$$ 
\end{enumerate}
\end{define}

\begin{rem}
\label{rem:local calculation}
For $n\geq n^\prime$ we have a commutative diagram
$$\xymatrix{H^1(\QQ_{n,p},T) \ar[r]\ar[d]_{\mathbf{N}}&
\textup{Hom}\left(H^1(\QQ_{n,p},T^*),\oo\right) \ar[d]^{\textup{res}^*}\\
H^1(\QQ_{n^\prime,p},T) \ar[r] & \textup{Hom}\left(
H^1(\QQ_{n^\prime,p},T^*),\oo\right) }$$
 where $\textup{res}^*$ is
induced from the restriction map 
$$\textup{res}:
H^1(\QQ_{n^\prime,p},T^*) \lra H^1(\QQ_{n,p},T^*).$$ 
We therefore have a commutative diagram 
$$\xymatrix@C=.16in{z_\infty^\chi \ar@{|->}[d]&\in&
H^1(\QQ_p,T\otimes\LL) \ar[r]\ar[d]& \textup{Hom}(\varinjlim_n
H^1(\QQ_{n,p},T^*),\oo) \ar[d]\\ 
z_0^\chi&\in& H^1(\QQ_p.T)\ar[r]&
\textup{Hom}(H^1(\QQ_p,T^*),\oo) } $$ 
Thus $\mathcal{L}_\xi^\prime$ is simply the image of $z_0^\chi$ under the map 
$$\tau_0: H^1(\QQ_p,T) \lra \textup{Hom}(H^1(\QQ_p,T^*),\oo).$$ 
\end{rem}

\section{Computation of the height pairing}
\label{sec:calculations}
Throughout~\S\ref{sec:calculations}, our base field $K$ is $\QQ$ and $\chi$ is an even, non-trivial Dirichlet character whose order is prime to $p$, and which has the property that $\chi(p)=1$. In this section we calculate the height pairing on the cyclotomic unit $c_1^\chi$. Note that, in view of Remark~\ref{rem:generation of p-units}, Proposition~\ref{prop:kernel} and the fact that $z_0^\chi \in \widetilde{H}^1_f(\QQ,T)$ is a universal norm (by its definition), this gives the only non-trivial output of the machinery we described in~\S\ref{sec:hts} we could hope for. 

For  $\psi=\chi^{\pm1}$, we write as usual $\oo(\psi)$ for the free $\oo$-module of rank one, on which $G_\QQ$ acts via $\psi$. Define $e_\psi:= \sum_{\delta\in\Delta}\psi^{-1}(\delta)\delta$ as the idempotent of $\oo[\Delta]$ associated to $\psi$. We identify the module $\oo(\psi)$ with $\left(\oplus_{v|p} \oo\cdot v\right)^{\psi}$ (therefore we regard $\mathfrak{g}_\psi:=e_\psi v_0$ as a generator of $\oo(\psi)$, where we recall that $v_0$ is the place of $L$ we fixed in \S\ref{subsec:notation} via choosing an embedding $\iota_p: \overline{\QQ}\hookrightarrow \overline{\QQ}_p$) and we define 
$$\frak{pr}_\psi: \left(\oplus_{v|p} \oo\cdot v\right)^{\psi} \stackrel{\sim}{\lra} \oo$$
 by setting $\frak{pr}_\psi: \frak{g}_\psi \mapsto 1$. In other words, $\frak{pr}_\psi$ is the map induced from projection onto the $v_0$-coordinate. For each  place $v$ of $L$ lying above $p$, write $\sigma_v: L \hookrightarrow L_v=\QQ_p$ for the induced embedding. 

 Let $\rho_\chi$ denote the compositum 
$$
\xymatrix@C=.27in{
 \widetilde{H}^1_f(\QQ,\oo(1)\otimes\chi^{-1})\ar[r]^{\beta_\chi^{1}}\ar@{-->}[rrd]_{\rho_\chi}& \widetilde{H}^2_f(\QQ,\oo(1)\otimes\chi^{-1})\ar
 [r]^{\iota}& H^2(\QQ,\oo(1)\otimes{\chi^{-1}})\ar[d]\\
&& H^2(\QQ_p,\oo(1)\otimes{\chi^{-1}}) 
}$$
and $\beta_\chi$ the compositum
$$
\xymatrix@C=.24in{
 \widetilde{H}^1_f(\QQ,\oo(1)\otimes\chi^{-1})\ar[r]^{\rho_\chi}\ar@{-->}[rrdd]_{\beta_\chi} &H^2(\QQ_p,\oo(1)\otimes{\chi^{-1}})\ar[r]^(.48){\cong}_(,48){\frak{h}}&\ \left(\bigoplus_{v|p}H^2(L_v,\oo(1))\right)^{\chi}\ar[d]^{\sum_{v}\textup{inv}_v}\\
 &&\left(\bigoplus_{v|p} \oo\cdot v \right)^{\chi} \ar[d]_{\cong}^{\frak{pr}_\chi}\\
 &&\oo
}$$
where the map $\beta_\chi^{1}$ in the first diagram is the Bockstein morphism applied on the first cohomology;  $\iota$ comes from Proposition~\ref{prop:compare1}; the isomorphism $\frak{h}$ in the second diagram from the Hochschild-Serre spectral sequence.  
Let $\log_p: \widehat{\QQ_p^\times} \ra \ZZ_p$ be the $p$-adic logarithm extended to the $p$-adic completion $\widehat{\QQ_p^\times}$ of $\QQ_p^\times$ by setting $\log_p(p)=0$. We extend $\log_p$ by linearity to define an $\oo$-module homomorphism
$$\log_p: \oo\otimes_{\ZZ_p}\widehat{\QQ_p^\times} \lra \oo.$$
 \begin{prop}
 \label{prop:bockstein calculation}
$\beta_\chi(c_1^{\chi})=\log_p(\iota_p(c_1^\chi))=v_0(z_0^\chi) \in \oo.$
 \end{prop}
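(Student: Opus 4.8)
The plan is to compute the Bockstein map $\beta_\chi^1$ explicitly on $c_0^\chi$ by tracing through the definition of the Selmer complex and the connecting maps, and then to identify the output with both the local $p$-adic logarithm and the leading term $v_0(z_0^\chi)$. The starting point is the short exact sequence $0\to T\otimes\LL \xrightarrow{\gamma-1} T\otimes\LL \to T\to 0$ already used in \S\ref{subsec:cyclo}, which (together with the propagation of Greenberg's local conditions to $M\otimes\LL$, cf.\ Proposition~\ref{prop:kernel} and \cite[\S8]{nek}) gives rise at the level of Selmer complexes to a Bockstein triangle. Nekov\'a\v r's definition of $\beta$ (\S11.1.3 of \cite{nek}, specialized as in \S\ref{subsubsec:heights}) says exactly that $\beta^1(x)$ is obtained by lifting $x$ to a class in $\widetilde{H}^1_f(T\otimes\LL)$, applying $\gamma-1$, dividing by $\log_p\rho_{\textup{cyc}}(\gamma)$, and reading off the image in $\widetilde{H}^2_f(T)$. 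By Proposition~\ref{prop:division}, the wild cyclotomic unit $\xi$ is, up to the scalar $\log_p\rho_{\textup{cyc}}(\gamma)$, exactly $(\gamma-1)z_\infty^\chi$, so a lift of $c_0^\chi$ (or rather of the relevant universal-norm-adjusted class) to the $\LL$-level is governed by $z_\infty^\chi$; this is what makes the element $z_0^\chi$ appear on the right-hand side.

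The key steps, in order, are: (1) write down an explicit cocycle representative for $c_0^\chi \in \widetilde{H}^1_f(\QQ,T)$, using the identification $\widetilde{H}^1_f(\QQ,T)=H^1_{\FFc}(\QQ,T)=(\oo_L[1/p]^\times)^\chi$ of Corollary~\ref{cor:tilde explicit}(i) together with the Kummer map; (2) produce an explicit lift of this class to the mapping-cone complex for $T\otimes\LL$, which exists because the relevant obstruction vanishes (this is precisely the content of Proposition~\ref{prop:division}, which says $\xi$ is divisible by $(\gamma-1)/\log_p\rho_{\textup{cyc}}(\gamma)$ in $H^1(\QQ,T\otimes\LL)$, and the local conditions cause no obstruction since $T^+=T$); (3) apply $\gamma-1$, divide by $\log_p\rho_{\textup{cyc}}(\gamma)$, and push to $\widetilde{H}^2_f(\QQ,T)$, obtaining a class that maps under $\iota$ to $H^2(\QQ,T)$ represented via $z_\infty^\chi$; (4) localize at $p$ and apply $\rho_\chi$; (5) run through $\frak h$ (Hochschild--Serre) and $\sum_v \textup{inv}_v$ to land in $(\oplus_{v|p}\oo\cdot v)^\chi$, then apply $\xi_\chi$ to project onto the $v_0$-coordinate. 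At the last stage, the local invariant map composed with the Kummer description identifies the $v_0$-component of the image with the $p$-adic valuation-cum-logarithm of the local image of $z_0^\chi$; here one uses that $\log_p$ has been normalized by $\log_p(p)=0$, so that on $\widehat{\QQ_p^\times}$ the two expressions $\log_p(\iota_p(c_0^\chi))$ and $v_0(z_0^\chi)$ coincide via the explicit local reciprocity/Kummer computation — the equality $c_0^\chi$ versus $z_0^\chi$ here is an equality \emph{after} applying $\log_p\circ\,\sigma_{v_0}$, reflecting that $z_0^\chi/c_0^\chi$ or their difference is a $p$-unit whose relevant invariant is pinned down by (\ref{def:sol-cyclo}) and Lemma~\ref{lem:vanishing}.

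The main obstacle I expect is step (3)--(5): carefully matching Nekov\'a\v r's abstract mapping-cone bookkeeping (the shift $[-1]$, the sign conventions in $\textup{res}_{S_f}-i_S^+$, the map $\iota$ of \cite[\S0.8.0]{nek}, and the direction of the Bockstein connecting homomorphism) with the concrete divided class $z_\infty^\chi$, so that no stray factor of $\log_p\rho_{\textup{cyc}}(\gamma)$, no sign, and no normalization of the invariant maps is lost. In other words, the conceptual content is cheap — it is forced by Proposition~\ref{prop:division} — but verifying that the cocycle-level lift in the Selmer complex is compatible with the $\LL$-module structure and that $(\gamma-1)$ applied to the lift genuinely recovers (the image of) $\xi$ \emph{inside the cone}, not merely inside $H^1(\QQ,T\otimes\LL)$, requires a genuine diagram chase through Definition~\ref{def: selmer complex} and \S11.1.3 of \cite{nek}. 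Once that compatibility is in hand, the identification $\beta_\chi(c_0^\chi)=v_0(z_0^\chi)$ is immediate, and the equality $v_0(z_0^\chi)=\log_p(\iota_p(c_0^\chi))$ follows from Remark~\ref{rem:local calculation} (which already identifies $\mathcal L_\xi'$ with the image of $z_0^\chi$ under $\tau_0$) together with the explicit formula for the local Tate pairing against $T^*$ in terms of $\log_p$ and the chosen generator $\frak g_\chi$.
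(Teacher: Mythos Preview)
Your proposal conflates two logically independent equalities and misidentifies the mechanism behind each.

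For the first equality $\beta_\chi(c_0^\chi)=\log_p(\iota_p(c_0^\chi))$: the paper invokes \cite[Proposition~9.3(ii)]{BG}, which computes the localized Bockstein of any $\chi$-unit as $\xi_\chi\big(e_\chi\sum_{v\mid p}\log_p(\sigma_v(c_0))\cdot v\big)$, and then carries out an idempotent calculation to reduce this sum over all $v\mid p$ to the single term $\log_p(\sigma_{v_0}(c_0^\chi))$. Your proposed route --- lifting $c_0^\chi$ via $z_\infty^\chi$ --- is not correct: $z_\infty^\chi$ is a lift of $z_0^\chi$, not of $c_0^\chi$. These are $\frak{F}$-linearly independent elements of $\widetilde{H}^1_f(\QQ,T)\otimes\frak{F}$ (Remark~\ref{rem:generation of p-units}); indeed $z_0^\chi$ is a universal norm, so its Bockstein vanishes by Proposition~\ref{prop:kernel}, whereas $c_0^\chi$ is not a universal norm. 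Proposition~\ref{prop:division} says nothing about lifting $c_0^\chi$: it only records how $z_\infty^\chi$ arises from $\xi$, and $\xi$ maps to $\xi_L^\chi=1$ under $\mathbb{N}$, not to $c_0^\chi$. A direct Bockstein computation on $c_0^\chi$ would have to work with a genuine cochain lift of the Kummer class of $c_0^\chi$ itself, and the outcome (via the local invariant of the cup product with the tautological class $\frak{c}_p$) is the $p$-adic \emph{logarithm}, not a valuation --- this is exactly the Burns--Greither calculation.

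For the second equality $\log_p(\iota_p(c_0^\chi))=v_0(z_0^\chi)$: the paper simply cites the main theorem of \cite{sol-wild}. This is a genuinely nontrivial result, relating the $p$-adic \emph{logarithm} of the \emph{tame} cyclotomic unit to the $\wp$-adic \emph{valuation} of Solomon's \emph{wild} $p$-unit. Your appeal to (\ref{def:sol-cyclo}) and Lemma~\ref{lem:vanishing} is insufficient: those statements only \emph{define} $z_0^\chi$; they do not compute $v_0(z_0^\chi)$. Your phrase ``valuation-cum-logarithm'' and the claim that the equality reflects ``that $z_0^\chi/c_0^\chi$ \dots\ is a $p$-unit whose relevant invariant is pinned down'' suggest you are treating $\log_p$ and $v_0$ as interchangeable on $\widehat{L_{v_0}^\times}\otimes\oo$; they are not. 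Likewise, Remark~\ref{rem:local calculation} concerns the identification of $\mathcal{L}_\xi'$ with $\tau_0(z_0^\chi)$ and has no bearing on the valuation formula. Finally, you have the two equalities in the wrong order: the Bockstein produces $\log_p$, and Solomon's theorem then links $\log_p$ of $c_0^\chi$ to the valuation of $z_0^\chi$ --- not the reverse.
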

\begin{proof}
The second equality is the main calculation of~\cite{sol-wild}, hence it suffices to check the first claimed equality. This assertion is  essentially~\cite[Proposition 9.3(ii)]{BG}. In fact, the statement of {loc.cit.} is that 
$\beta_\chi(c_1^\chi)=\frak{pr}_\chi\left(e_\chi\sum_{v|p}\log_p(\sigma_v(c_1))\cdot v\right),$
 where the equality takes place in $\oo$. Furthermore, we have the following brute-force calculation:
\begin{align*}
\oo(\chi)\ni e_\chi\sum_{v|p}\log_p(\sigma_v(c_1))\cdot v&= \sum_{\delta\in\Delta}\chi^{-1}(\delta)\delta\sum_{v|p}\log_p(\sigma_v(c_1))\cdot v\\
&= \sum_{\delta\in\Delta}\sum_{v|p}\chi^{-1}(\delta)\log_p(\sigma_v(c_1))\cdot v^{\delta}\\
&=\sum_{\delta\in\Delta}\sum_{\omega|p}\chi^{-1}(\delta)\log_p(\sigma_{\omega^{\delta^{-1}}}(c_1))\cdot \omega\\
&=\sum_{\delta\in\Delta}\sum_{\omega|p}\chi^{-1}(\delta)\log_p(\sigma_{\omega}(c_1^\delta))\cdot \omega\\
&=\sum_{\omega|p}\log_p(\sigma_{\omega}(c_1^\chi))\cdot \omega \in \oo(\chi),
\end{align*}
 where $v^{\delta}$ is the place obtained by the action of $\delta \in \Delta$ on the set of places $\{v: v|p\}$; and we have the final equality by the $\oo$-linearity of $\log_p$, and the forth equality thanks to the following commutative diagram:
 \be\label{diag:commutative}\xymatrix@R=.15in @C=.8in{ 
 L \ar[d]_{\delta}\ar[r]^{\sigma_v}&\QQ_p \ar@{=}[d]\\
 L\ar[r]_{\sigma_{v^{\delta}}}& \QQ_p
 }\ee
  We further have,
  \begin{align*} 
\sum_{\omega|p}\log_p(\sigma_{\omega}(c_1^\chi))\cdot \omega&= \sum_{\delta \in \Delta}\log_p(\sigma_{v_0^\delta}(c_1^\chi))\cdot v_0^\delta\\
&=  \sum_{\delta \in \Delta}\log_p \left(\sigma_{v_0}((c_1^\chi)^{\delta^{-1}})\right)\cdot v_0^\delta\\
&= \sum_{\delta \in \Delta}\log_p\left(\sigma_{v_0}(c_1^\chi)^{\chi^{-1}(\delta)}\right)\cdot v_0^\delta\\
&=\sum_{\delta \in \Delta}{\chi^{-1}(\delta)}\log_p\left(\sigma_{v_0}(c_1^\chi)\right)\cdot v_0^\delta\\
&=\log_p\left(\sigma_{v_0}(c_1^\chi)\right)\cdot e_\chi v_0 \in \oo(\chi),
\end{align*}
where the second equality holds thanks to (\ref{diag:commutative}) and the third because $\left(c_1^\chi\right)^{\delta^{-1}}=\left(c_1^\chi\right)^{\chi^{-1}(\delta)}$. Putting all this together (and noting that $\sigma_{v_0}\big{|}_L=\iota_p\big{|}_L$ by definition), we conclude that
$$\beta_\chi(c_1^\chi)=\xi_\chi\left( \log_p\left(\sigma_{v_0}(c_1^\chi)\right)\cdot e_\chi v_0 \right)=\log_p(\iota_p(c_1^\chi))$$
as desired.
\end{proof}
\begin{rem}
\label{rem:dependenceoflogonv0}
Note that if we replace $v_0$ by another place $v_0^\delta$ of $L$, the value of $\beta_\chi(c_1^\chi)=\log_p(\sigma_{v_0}(c_1^\chi))$ changes by $\chi^{-1}(\delta)$: \,\,\,\,$\log_p(\sigma_{v_0^\delta}(c_1^\chi))=\chi^{-1}(\delta)\log_p(\sigma_{v_0}(c_1^\chi)).$
\end{rem}
We are now ready to complete the computation of Nekov\'a\v{r}'s height pairing $\langle c_1^\chi, \alpha\rangle_{\textup{Nek}}$ for $\alpha \in \widetilde{H}^1_f(\QQ,T^*)$ and $c_1^\chi$ as above. We have the following identifications:
\begin{equation}
\label{eqn:tilde identification}
\widetilde{H}^1_f(\QQ,T^*) \stackrel{\sim}{\lra}H^0(\QQ_p,\oo(\chi))\stackrel{\sim}{\lra}\left(\bigoplus_{v|p}\oo\cdot v\right)^{\chi^{-1}}\stackrel{\frak{pr}_{\chi^{-1}}}{\lra} \oo.
\end{equation} 
Let $\alpha(v_0)$ denote the image of $\alpha$ under the compositum of the maps (\ref{eqn:tilde identification}). 
\begin{rem}
\label{rem:dependenceonv0}
Note that since $\frak{pr}_{\chi^{-1}}$ depends on the choice of $v_0$, so does $\alpha(v_0)\in \oo$. Write $\frak{pr}_{\chi^{-1}}=\frak{pr}_{\chi^{-1}}(v_0)$ only in this remark to remind us the dependence on $v_0$. One then has $\frak{pr}_{\chi^{-1}}(v_0^\delta)=\chi(\delta)\frak{pr}_{\chi^{-1}}(v_0)$ and in turn $\alpha(v_0^\delta)=\chi(\delta)\alpha(v_0)$.
\end{rem}
\begin{lemma}
\label{lem:local pairing-1}
Suppose $x\in H^0(\QQ_p,\oo)=\oo$ and $y \in H^2(\QQ_p,\oo(1))$. Then
\begin{itemize}
\item[(i)] $x\cup y =x\cdot y \in H^2(\QQ_p,\oo(1))$,
\item[(ii)] $\langle x,y\rangle_{\textup{Tate}}= x\cdot \textup{inv}_p(y) \in \oo$, where $\langle\,,\,\rangle_{\textup{Tate}}$ is the local Tate pairing.
\end{itemize}
\end{lemma}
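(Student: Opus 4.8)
The plan is to reduce both claims to the explicit description of the cup product on the level of continuous cochains, together with the definition of the local Tate pairing (as in~\cite[\S5.1--\S5.2]{nek}) as the composition of cup product with the invariant isomorphism. This is a standard compatibility, and I would include it only for completeness.

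First I would treat (i). Since $G_{\QQ_p}$ acts trivially on $\oo$, the group $H^0(\QQ_p,\oo)$ is just the $\oo$-module of $0$-cocycles, so $x$ is represented by the constant cochain with value $x$. Choosing a continuous $2$-cocycle $b\colon G_{\QQ_p}\times G_{\QQ_p}\to\oo(1)$ representing $y$, the standard formula for the cup product $C^0(G_{\QQ_p},\oo)\otimes C^2(G_{\QQ_p},\oo(1))\to C^2(G_{\QQ_p},\oo\otimes_{\oo}\oo(1))$ gives $(x\cup b)(g_1,g_2)=x\otimes b(g_1,g_2)$, because the twisting factor $g_1\cdots g_p$ appearing in the cup-product formula is trivial in bidegree $p=0$. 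Under the canonical identification $\oo\otimes_{\oo}\oo(1)\cong\oo(1)$ the cochain $x\cup b$ is exactly $(g_1,g_2)\mapsto x\cdot b(g_1,g_2)$, a representative of the class $x\cdot y$; passing to cohomology yields $x\cup y=x\cdot y$. Alternatively, one can argue purely structurally: the cup product is $\oo$-bilinear and functorial, so cupping with $1\in H^0(\QQ_p,\oo)$ is the identity on $H^2(\QQ_p,\oo(1))$, and bilinearity then pulls out the scalar $x$.

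For (ii) I would recall that the local Tate pairing $\langle\,,\,\rangle_{\textup{Tate}}\colon H^0(\QQ_p,\oo)\otimes_{\oo}H^2(\QQ_p,\oo(1))\to\oo$ is by definition $\langle x,y\rangle_{\textup{Tate}}=\textup{inv}_p(x\cup y)$, where $\textup{inv}_p\colon H^2(\QQ_p,\oo(1))\xrightarrow{\sim}\oo$ is obtained from the classical invariant isomorphism $H^2(\QQ_p,\ZZ_p(1))\xrightarrow{\sim}\ZZ_p$ by applying $\oo\otimes_{\ZZ_p}-$, hence is $\oo$-linear. Combining this with part (i) and the $\oo$-linearity of $\textup{inv}_p$ gives $\langle x,y\rangle_{\textup{Tate}}=\textup{inv}_p(x\cdot y)=x\cdot\textup{inv}_p(y)$. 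There is no genuine obstacle here; the only points requiring a modicum of care are the vanishing of the twisting factor in the cup-product formula in bidegree $(0,2)$ and the compatibility of the invariant isomorphism with the $\oo$-module structure, both immediate from the constructions.
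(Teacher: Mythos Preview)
Your proof is correct; the paper simply writes ``Clear.'' and gives no argument, so you have supplied the routine verification that the author omitted. The content is the same---cup product with a degree-zero class is scalar multiplication, and the Tate pairing is cup product followed by $\textup{inv}_p$---you have just made it explicit.
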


\begin{proof}
Clear.
\end{proof}
Lemma~\ref{lem:local pairing-1} may be used to check the following:
\begin{lemma}
\label{lem:local pairing-2}
The following diagram commutes:
$$\xymatrix@C=0.01in{
H^0(\QQ_p,\oo(\chi))\ar[d]_(.4){\cong}&\otimes & H^2(\QQ_p,\oo(1)\otimes\chi^{-1})\ar[d]^(.4){\cong}\ar[rrrrr]^(.73){\langle\,,\,\rangle_{\textup{Tate}}}&&&&&\oo\ar@{=}[ddd]\\ 
\left(\bigoplus_{v|p}H^0(L_v,\oo)\right)^{\chi^{-1}}\ar@{=}[d]& & \left(\bigoplus_{v|p}H^2(L_v,\oo(1))\right)^{\chi}\ar[d]_{\sum_{v|p}\textup{inv}_v}^\cong&&&&&\\
\left(\bigoplus_{v|p}\oo\cdot v\right)^{\chi^{-1}}\ar[d]_(.6){\frak{pr}_{\chi{-1}}}&&\left(\bigoplus_{v|p}\oo\cdot v\right)^{\chi}\ar[d]_(.6){\frak{pr}_\chi}\\
\oo&\otimes&\oo\ar[rrrrr]_{(\,,\,)}&&&&&\oo
}$$
\end{lemma}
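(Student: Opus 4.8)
The plan is to verify the commutativity of the diagram in Lemma~\ref{lem:local pairing-2} by a chain of reductions, working backwards through the vertical identifications until we land on the elementary statement of Lemma~\ref{lem:local pairing-1}. First I would unwind the left-hand column: the isomorphism $H^0(\QQ_p,\oo(\chi))\stackrel{\sim}{\lra}\left(\bigoplus_{v|p}H^0(L_v,\oo)\right)^{\chi^{-1}}$ is (the $H^0$-degree part of) the Hochschild--Serre / Shapiro-type identification obtained from the fact that $\oo(\chi)$, restricted to $G_{\QQ_p}$, is a direct sum $\bigoplus_{v|p}\oo$ of copies of the trivial representation indexed by the primes $v|p$ of $L$ (using $\mathbf{(H)}$ and $\chi(\wp)=1$), with $\Delta$ permuting the summands; the superscript $\chi^{-1}$ extracts exactly the $v_0$-component up to the $\Delta$-twist, and the bottom-left map $\xi_{\chi^{-1}}$ is projection onto that component. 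Symmetrically, the right-hand column is the analogous decomposition for $H^2(\QQ_p,\oo(1)\otimes\chi^{-1})$, where now $\oo(1)\otimes\chi^{-1}$ restricted to $G_{\QQ_p}$ is $\bigoplus_{v|p}\oo(1)$, followed by $\sum_v\textup{inv}_v$ and then $\xi_\chi$. The point is that all four of these vertical maps are built from one and the same decomposition of the induced/restricted representation, so that the diagram reduces to checking compatibility of the cup-product pairing with a direct-sum decomposition of $G_{\QQ_p}$-modules.

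Next I would record that local Tate duality is compatible, summand by summand, with such a decomposition: if $M\cong\bigoplus_i M_i$ as $G_{\QQ_p}$-modules and $M^*\cong\bigoplus_i M_i^*$ is the dual decomposition, then for $x=(x_i)\in H^0(\QQ_p,M)$ and $y=(y_i)\in H^2(\QQ_p,M^*)$ one has $\langle x,y\rangle_{\textup{Tate}}=\sum_i\langle x_i,y_i\rangle_{\textup{Tate}}$, because cup product is additive in each variable and the invariant map is the sum of the invariant maps on the summands. Applying this with $M=\oo(\chi)$ (so $M_i=\oo$, trivial) and $M^*=\oo(1)\otimes\chi^{-1}$ (so $M_i^*=\oo(1)$), and using Lemma~\ref{lem:local pairing-1}(ii) on each summand, the pairing becomes $\sum_{v|p}x_v\cdot\textup{inv}_v(y_v)$. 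It then only remains to match this expression against the bottom pairing $(\,,\,)$: after passing to $\chi^{-1}$- and $\chi$-isotypic parts and applying $\xi_{\chi^{-1}}$ resp.\ $\xi_\chi$, only the $v_0$-coordinates survive, and $\xi_{\chi^{-1}}(x)\cdot\xi_\chi(y)=x_{v_0}\cdot\textup{inv}_{v_0}(y_{v_0})$; a short bookkeeping check (exactly of the sort performed in the brute-force calculation inside the proof of Proposition~\ref{prop:bockstein calculation}, using the commutativity of \eqref{diag:commutative}) confirms that the Galois/place-permutation twists on the two sides cancel, so the square commutes.

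I would organize the writeup as: (1) describe the common decomposition $\oo(\psi)|_{G_{\QQ_p}}\cong\bigoplus_{v|p}\oo(\psi|_{v})$ and note $\psi|_v$ is trivial, identifying all four vertical maps in terms of it; (2) state and prove (or cite from \cite[\S5]{nek}) the additivity of local Tate duality over direct sums; (3) reduce to Lemma~\ref{lem:local pairing-1} on each summand; (4) do the final isotypic-projection bookkeeping. The main obstacle I anticipate is purely notational rather than conceptual: keeping straight the two dual idempotents $e_\chi$ and $e_{\chi^{-1}}$, the action of $\Delta$ on the set of places $\{v\mid p\}$, and the resulting twists $\alpha(v_0^\delta)=\chi(\delta)\alpha(v_0)$ (Remark~\ref{rem:dependenceonv0}) versus the corresponding twist on the $H^2$-side, so that one is certain the $\chi$- and $\chi^{-1}$-twists really do cancel in the pairing and no stray factor of $\chi(\delta)$ is left over. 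None of the individual steps is hard; the care is in the matching of conventions, and I would lean on the already-established computation in the proof of Proposition~\ref{prop:bockstein calculation} to handle that matching cleanly.
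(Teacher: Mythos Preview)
Your proposal is correct and is essentially the approach the paper intends: the paper gives no proof at all beyond the one-line pointer ``Lemma~\ref{lem:local pairing-1} may be used to check the following,'' and your plan---decompose via the Hochschild--Serre/Shapiro identification, use additivity of the local pairing over the direct sum, apply Lemma~\ref{lem:local pairing-1} on each summand, and then do the $\chi$/$\chi^{-1}$ isotypic bookkeeping---is exactly how one fleshes that pointer out. The only thing to watch is the normalization of the idempotents (the paper sets $e_\psi=\sum_{\delta}\psi^{-1}(\delta)\delta$ without a $1/|\Delta|$), so when you verify that $\sum_{v|p} x_v\cdot\textup{inv}_v(y_v)$ agrees with $\xi_{\chi^{-1}}(x)\cdot\xi_\chi(y)$ you should track whether a factor of $|\Delta|$ appears and, if so, that it is absorbed consistently on both sides; your instinct to reuse the brute-force computation from the proof of Proposition~\ref{prop:bockstein calculation} is the right way to handle this.
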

Here, $(a,b):=ab \in \oo$ for $a,b \in \oo$, and the vertical isomorphisms between first two rows come from the Hochschild-Serre spectral sequence.

The following Proposition is key to our main results. 

\begin{prop}
\label{prop:exp-pairing}
For an arbitrary $\alpha \in \tilde{H}^1_f(\QQ,T^*)$, we have $\langle c_1^\chi,\alpha\rangle_{\textup{Nek}}=v_0(z_0^\chi)\cdot\alpha(v_0).$
\end{prop}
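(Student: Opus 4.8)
The plan is to unwind the definition of Nekov\'a\v{r}'s height pairing recorded in Proposition~\ref{prop:classicalpairing}, namely $\langle c_0^\chi,\alpha\rangle_{\textup{Nek}}=\langle\beta_\chi^1(c_0^\chi),\alpha\rangle_{\textup{PT}}$, and to match the two factors on the right-hand side against the two factors $v_0(z_0^\chi)$ and $\alpha(v_0)$ separately. First I would recall that, under the hypothesis $\mathbf{(H)}$, Corollary~\ref{cor:tilde explicit}(ii) and the identifications (\ref{eqn:tilde identification}) let me view $\alpha$ concretely: it is determined by its image $\alpha(v_0)\in\oo$ under the chain $\widetilde{H}^1_f(\QQ,T^*)\xrightarrow{\sim}H^0(\QQ_p,\oo(\chi))\xrightarrow{\sim}(\bigoplus_{v|p}\oo\cdot v)^{\chi^{-1}}\xrightarrow{\xi_{\chi^{-1}}}\oo$. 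Likewise $\beta_\chi^1(c_0^\chi)\in\widetilde{H}^2_f(\QQ,T)$ can be pushed into $H^2(\QQ_p,\oo(1)\otimes\chi^{-1})$ via $\rho_\chi$, and Proposition~\ref{prop:bockstein calculation} identifies the further image $\beta_\chi(c_0^\chi)$ in $\oo$ with $v_0(z_0^\chi)$.

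The heart of the argument is then the compatibility between the global Poitou--Tate pairing $\langle\,,\,\rangle_{\textup{PT}}$ on $\widetilde{H}^2_f(\QQ,T)\otimes\widetilde{H}^1_f(\QQ,T^*)$ and the local Tate pairing at $p$. Concretely: since $\alpha$ lies in the image of $H^0(\QQ_p,\oo(\chi))$, one knows (from the construction of $\langle\,,\,\rangle_{\textup{PT}}$ as the sum of local invariants of cup products, \cite[\S6.3]{nek}) that the global pairing factors through the local pairing at the prime $\wp=v_0$ only --- the other primes above $p$ contribute nothing because $\chi(\wp')\neq1$ forces $H^0(\QQ_p,\oo(\chi))$ to be concentrated at $v_0$, and the finite primes $\frak{q}\nmid p$ contribute nothing because of the unramified/Greenberg local conditions. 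So $\langle\beta_\chi^1(c_0^\chi),\alpha\rangle_{\textup{PT}}$ equals the local Tate pairing at $v_0$ of $\textup{loc}_{v_0}(\beta_\chi^1(c_0^\chi))$ against the local class $\alpha_{v_0}$. Now I invoke Lemma~\ref{lem:local pairing-2}: that commutative diagram says precisely that, after transporting both entries through the Hochschild--Serre identifications and the maps $\xi_\chi$, $\xi_{\chi^{-1}}$, the local Tate pairing becomes ordinary multiplication $(a,b)\mapsto ab$ in $\oo$. Feeding in the two computed values, the left factor becomes $\beta_\chi(c_0^\chi)=v_0(z_0^\chi)$ by Proposition~\ref{prop:bockstein calculation}, and the right factor becomes $\alpha(v_0)$ by definition of the identifications (\ref{eqn:tilde identification}); hence $\langle c_0^\chi,\alpha\rangle_{\textup{Nek}}=v_0(z_0^\chi)\cdot\alpha(v_0)$.

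The step I expect to be the main obstacle is the bookkeeping for why the global pairing reduces to the single local contribution at $v_0$, with the correct normalization and no stray unit or sign. One must check that $\textup{loc}_{v_0}(\beta_\chi^1(c_0^\chi))$ actually coincides, under $\rho_\chi$ followed by $\frak{h}$ and $\sum_v\textup{inv}_v$, with the class whose $\xi_\chi$-image was computed in Proposition~\ref{prop:bockstein calculation} --- i.e.\ that the ``$\beta_\chi$'' map of \S\ref{sec:calculations} is genuinely the composition of localization at $p$ with the local pairing against $\alpha$, and not off by the choice of generator $\frak{g}_\chi=e_\chi v_0$. Here the dependence-on-$v_0$ remarks (Remark~\ref{rem:dependenceoflogonv0} and Remark~\ref{rem:dependenceonv0}) are reassuring: replacing $v_0$ by $v_0^\delta$ scales $v_0(z_0^\chi)$ by $\chi^{-1}(\delta)$ and $\alpha(v_0)$ by $\chi(\delta)$, so the product $v_0(z_0^\chi)\cdot\alpha(v_0)$ is independent of the choice, exactly as $\langle\,,\,\rangle_{\textup{Nek}}$ must be; I would use this consistency check as evidence that the normalizations have been threaded correctly. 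The remaining verifications --- exactness of Proposition~\ref{prop:compare1}/\ref{prop:compare} being compatible with the duality, and the vanishing of the off-$v_0$ and away-from-$p$ terms --- are then routine given the explicit descriptions already assembled in \S\ref{subsec:classical} and \S\ref{subsec:tate}.
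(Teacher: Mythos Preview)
Your proposal is correct and follows essentially the same route as the paper: reduce via Proposition~\ref{prop:classicalpairing} to $\langle\beta_\chi^1(c_0^\chi),\alpha\rangle_{\textup{PT}}$, collapse the global pairing to the local Tate pairing at $p$ (the paper records this as the commutative square~(\ref{eqn:pairing}), justified by $H^2(\QQ_\ell,T)=0$ for $\ell\mid\ff_\chi$ rather than by the unramified conditions you cite), then apply Lemma~\ref{lem:local pairing-2} and Proposition~\ref{prop:bockstein calculation} to obtain $v_0(z_0^\chi)\cdot\alpha(v_0)$. The only adjustment needed is that the localization is at the prime $p$ of $\QQ$, not at the place $v_0$ of $L$; the passage to the $v_0$-coordinate happens afterwards via Hochschild--Serre and $\xi_\chi,\xi_{\chi^{-1}}$, exactly as in Lemma~\ref{lem:local pairing-2}.
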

\begin{rem}
\label{rem:indpendentofv0}
Both $v_0(z_0^\chi)$ and $\alpha(v_0)$ depend on the choice of $v_0$, yet 
$v_0(z_0^\chi)\cdot\alpha(v_0)$ is independent of $v_0$ thanks to Remarks~\ref{rem:dependenceoflogonv0} and \ref{rem:dependenceonv0}.
\end{rem}
\begin{proof}
By Proposition~\ref{prop:classicalpairing} 
$$\langle c_1^\chi,\alpha\rangle_{\textup{Nek}}=\langle\beta_\chi^{1}(c_1^\chi),\alpha \rangle_{\textup{PT}},$$
 where 
 $$\langle\,,\,\rangle_{\textup{PT}}: \widetilde{H}^2_f(\QQ,T)\otimes\widetilde{H}^1_f(\QQ,T^*) \lra \oo$$
   denotes the global pairing from~\cite[\S6.3]{nek}. The definition of this global pairing (along with the fact that $H^2(\QQ_\ell,T)=0$ for every $\ell|f_\chi$) shows that the following diagram commutes:
\begin{equation}
\label{eqn:pairing}
\xymatrix @C=.2in{\widetilde{H}^2_f(\QQ,\oo(1)\otimes\chi^{-1})\ar[d]_{\textup{loc}_p\,\circ\,\iota}&\otimes&\widetilde{H}^1_f(\QQ,\oo(\chi))\ar[r]^(.67){\langle\,,\,\rangle_{\textup{PT}}}& \oo\\
H^2(\QQ_p,\oo(1)\otimes\chi^{-1})&\otimes&H^{0}(\QQ_p,\oo(\chi))\ar[u]\ar[r]^{\cup}&H^2(\QQ_p,\oo(1))\ar[u]^{\textup{inv}_p}
}
\end{equation}
We explain the arrows in (\ref{eqn:pairing}): The arrow on the left is the canonical map  (coming from Proposition~\ref{prop:compare1})
$$\iota: \widetilde{H}^2_f(\QQ,\oo(1)\otimes\chi^{-1})
\lra H^2(\QQ,\oo(1)\otimes\chi^{-1})$$
 followed by the restriction map $\textup{loc}_p$. The extended Selmer group $\widetilde{H}^1_f(\QQ,\oo(\chi))$ may be canonically identified by $H^0(\QQ_p,\oo(\chi))$  (see \S\ref{subsec:classical}), this is how we obtain the vertical arrow in the center. 

The commutative  diagram (\ref{eqn:pairing}) gives $\langle c_1^\chi,\alpha\rangle_{\textup{Nek}}=\langle \rho_\chi(c_1^\chi),\alpha\rangle_{\textup{Tate}},$
where $\rho_\chi$ is defined as in the beginning of \S\ref{sec:calculations}. Furthermore, by Lemma~\ref{lem:local pairing-2} 
$$\langle \rho_\chi(c_1^\chi),\alpha\rangle_{\textup{Tate}}=\left(\beta_\chi(c_1^\chi),\alpha(v_0)\right)=v_0(z_0^\chi)\cdot\alpha(v_0),$$
 where $(a,b):=a\cdot b$ for $a,b \in \oo$ as in Lemma~\ref{lem:local pairing-2}, and the final equality is Proposition~\ref{prop:bockstein calculation}. The proof is now complete.
\end{proof}

\section{Rubin's formula}
\label{sec:main}
Throughout~\S\ref{sec:calculations}, our base field $K$ is $\QQ$ and $\chi$ is an even, non-trivial Dirichlet character whose order is prime to $p$, and which has the property that $\chi(p)=1$. In this section we complete our main computation, using the calculations carried out in \S\ref{sec:calculations}. Starting with $\alpha \in \widetilde{H}^1_f(\QQ,\oo(\chi))$ as above, we first wish to define an element $\phi_\alpha$
\be\label{eqn:defphialpha}
\xymatrix{
\phi_\alpha\in H^1(\QQ_p,\oo(\chi))=\left(\bigoplus_{v|p}H^1(L_v,\oo)\right)^{\chi^{-1}}\ar[r]^(.6){\sim}_(.6){\frak{pr}_{\chi^{-1}}}&H^1(L_{v_0},\oo)= \textup{Hom}(G_{v_0},\oo).
}
\ee
 Here we recall that $G_v=\textup{Gal}(\overline{\QQ}_p/L_v)$ and $\frak{pr}_{\chi^{-1}}$ is the projection onto the $v_0$-coordinate as in \S\ref{sec:calculations}. In the equalities above, we are again using an identification coming from Hochschild-Serre spectral sequence, along with the fact that $H^1(L_v,\oo)=\textup{Hom}(G_v,\oo)$. Note also that $\textup{Hom}(G_v,\oo)$ is the group of continuous homomorphisms and we have 
 $$\textup{Hom}(G_v,\oo)=\textup{Hom}(G_v^{\textup{ab}},\oo)=\textup{Hom}(G_v^{\textup{ab},p},\oo)=\textup{Hom}_{\oo}(\oo\otimes_{\ZZ_p}G_v^{\textup{ab},p},\oo),$$
 where $G_v^{\textup{ab}}$ for the abelianization of the group $G_v$ and $G_v^{\textup{ab},p}$ is its pro-$p$ part.
 
 We write $\phi_\alpha^{v_0} \in  \textup{Hom}(G_{v_0},\oo)$ for the image of $\phi_\alpha$ under the compositum (\ref{eqn:defphialpha}) (which we henceforth  call $\frak{r}_\chi$). Defining $\phi_\alpha^{v_0}$ as the unramified homomorphism given by 
$$\xymatrix@R=.01in @C=.02in{
\phi_\alpha^{v_0}:&G_{v_0} \ar[rrrr]&&&&\oo\\
&\textup{Fr}_{v_0}\ar@{|->}[rrrr]&&&& \alpha({v}_0),
}
$$    
where $\textup{Fr}_{v_0}$ denotes an arithmetic Frobenius at $v_0$,  we also define $\phi_\alpha \in H^1(\QQ_p,\oo(\chi))$ using the identification $\frak{r}_\chi$. Below, we normalize the local reciprocity isomorphism (and the local invariant map) by letting uniformizers correspond to arithmetic Frobenius elements.

Let $\xi=\xi_{\infty}^{\chi} =\{\xi_n^{\chi}\}\in H^1(\QQ,T\otimes\LL)$ be the collection of \emph{wild} cyclotomic units, as in \S\ref{sec:cyclo}. 
Recall the definition of the element $\mathcal{L}_\xi^{\prime} \in H^1(\QQ_p,T)$ from \S\ref{subsec:tate} which we regard as an element of $\textup{Hom}(H^1(\QQ_p,T^*),\oo)$ via local duality. Recall also the \emph{tame} cyclotomic unit $c_1^\chi \in H^1(\QQ,T)$.
\begin{thm}
\label{main-primitive}
$\langle c_1^\chi,\alpha\rangle_{\textup{Nek}}=\mathcal{L}_\xi^{\prime}(\phi_\alpha)$.
\end{thm}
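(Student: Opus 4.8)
The plan is to reduce the claimed identity to Proposition~\ref{prop:exp-pairing} by matching both sides against the two concrete local invariants $v_0(z_0^\chi)$ and $\alpha(v_0)$. By Proposition~\ref{prop:exp-pairing} we already know $\langle c_0^{\chi},\alpha\rangle_{\textup{Nek}}=v_0(z_0^\chi)\cdot\alpha(v_0)$, so the entire content of the theorem is the evaluation
$$\mathcal{L}_\xi^{\prime}(\phi_\alpha)=v_0(z_0^\chi)\cdot\alpha(v_0).$$
First I would unwind the definition of $\mathcal{L}_\xi^{\prime}$: by Remark~\ref{rem:local calculation}, $\mathcal{L}_\xi^{\prime}$ is simply the image of the wild cyclotomic $p$-unit $z_0^\chi$ under the local Tate duality map $\tau_0\colon H^1(\QQ_p,T)\to\textup{Hom}(H^1(\QQ_p,T^*),\oo)$. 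Hence the left-hand side is the value of the local cup-product pairing $\langle z_0^\chi,\phi_\alpha\rangle_{\textup{Tate}}$, where we regard $z_0^\chi\in H^1(\QQ_p,T)$ via $\textup{loc}_p$ and $\phi_\alpha\in H^1(\QQ_p,T^*)$ as constructed in~(\ref{eqn:defphialpha}).

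Next I would compute this local cup product explicitly. The key structural point is that $\phi_\alpha$ is, by construction, an \emph{unramified} class: under the identification $H^1(\QQ_p,\oo(\chi))\cong H^1(L_{v_0},\oo)=\textup{Hom}(G_{v_0},\oo)$ coming from Hochschild-Serre, $\phi_\alpha$ corresponds to the unramified homomorphism sending $\textup{Fr}_{v_0}$ to $\alpha(v_0)$. Since $T=\oo(1)\otimes\chi^{-1}$ and $\chi$ is unramified at $p$ (indeed $\chi(\wp)=1$), over $L_{v_0}=\QQ_p$ the representation $T$ becomes $\oo(1)$, and the cup product $H^1(\QQ_p,\oo(1))\times H^1(\QQ_p,\oo)\to H^2(\QQ_p,\oo(1))\cong\oo$ pairs a class against an unramified homomorphism by the standard recipe: $\langle u,\psi\rangle_{\textup{Tate}}=\psi(\textup{Fr}_p)\cdot v_p(u)$ for $u\in \QQ_p^\times\hookrightarrow H^1(\QQ_p,\oo(1))$ and $\psi$ unramified, where $v_p$ is the normalized valuation. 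Tracking the $\chi$-component carefully — using exactly the bookkeeping with the places $v\mid p$ and the idempotent $e_\chi$ that appears in the proof of Proposition~\ref{prop:bockstein calculation} — this yields $\langle z_0^\chi,\phi_\alpha\rangle_{\textup{Tate}}=\phi_\alpha^{v_0}(\textup{Fr}_{v_0})\cdot v_0(z_0^\chi)=\alpha(v_0)\cdot v_0(z_0^\chi)$. Here $v_0(z_0^\chi)$ is the same quantity appearing in Proposition~\ref{prop:exp-pairing} (the $v_0$-valuation of the cyclotomic $p$-unit), and one should note that $z_0^\chi$ genuinely is a $p$-unit, not a unit (by Lemma~\ref{lem:vanishing}), so its valuation at $v_0$ need not vanish — this is precisely the exceptional-zero phenomenon that makes the pairing nonzero.

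Combining the two computations gives $\mathcal{L}_\xi^{\prime}(\phi_\alpha)=\alpha(v_0)\cdot v_0(z_0^\chi)=\langle c_0^{\chi},\alpha\rangle_{\textup{Nek}}$, which is the assertion. The main obstacle I anticipate is not any single deep input but the careful normalization bookkeeping: one must check that the valuation map implicit in ``$v_0(z_0^\chi)$'' in Proposition~\ref{prop:exp-pairing} (arising from the Bockstein-to-Poitou-Tate composite and the invariant maps $\textup{inv}_v$) matches, with the correct sign and the correct choice of arithmetic versus geometric Frobenius, the valuation map appearing in the explicit local Tate cup-product formula; and similarly that the two appearances of $\alpha(v_0)$ — one through $\xi_{\chi^{-1}}$ in~(\ref{eqn:tilde identification}) and one through $\frak{r}_\chi$ in~(\ref{eqn:defphialpha}) — are literally the same element of $\oo$ under the Hochschild-Serre identifications, including the dependence on the choice of $v_0$ recorded in Remarks~\ref{rem:dependenceoflogonv0}, \ref{rem:dependenceonv0}, and~\ref{rem:indpendentofv0}. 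Once these compatibilities are pinned down the identity is essentially formal.
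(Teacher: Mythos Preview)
Your proposal is correct and follows essentially the same route as the paper: reduce via Remark~\ref{rem:local calculation} to the local Tate pairing $\langle z_0^\chi,\phi_\alpha\rangle_{\textup{Tate}}$, exploit that $\phi_\alpha$ is unramified so that this pairing is computed by the $v_0$-valuation of $z_0^\chi$ times $\phi_\alpha^{v_0}(\textup{Fr}_{v_0})=\alpha(v_0)$, and conclude by Proposition~\ref{prop:exp-pairing}. The paper spells out the local class field theory step via the reciprocity map $\frak{a}_{v_0}$ and the decomposition $\iota_p(z_0^\chi)=\varpi_{v_0}^{v_0(z_0^\chi)}\cdot u$, but this is exactly your ``standard recipe'' made explicit.
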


\begin{proof}
Let $z_0^\chi$ be Solomon's cyclotomic $p$-unit as above. It follows from the discussion in~\S\ref{subsec:tate} that 
\be\label{eqn:infofrom32}\mathcal{L}_\xi^{\prime}(\phi_\alpha)=\langle z_0^{\chi}, \phi_\alpha \rangle_{\textup{Tate}}.\ee
  The computation of the right hand side of Theorem~\ref{main-primitive} is thus reduced to local class field theory. 
  
  Let $\frak{r}_{\chi^{-1}}$ denote the following compositum:
\be
\label{eqn:compositum1}\xymatrix@R=.1in{H^1(\QQ_p,\oo(1)\otimes\chi^{-1})=\left(\bigoplus_{v|p}H^1(L_v,\oo(1))\right)^\chi\ar[r]^(.61){\sim}_(.61){\xi_{\chi}}&H^1(L_{v_0},\oo(1))= \widehat{L_{v_0}^{\times}}\otimes_{\ZZ_p}\oo,
  }
  \ee
  where $\xi_\chi$ is the projection onto the $v_0$-coordinate as above, and $\widehat{L_v^{\times}}$ stands for the $p$-adic completion of the multiplicative group $L_v^\times$. We note that $\frak{r}_{\chi^{-1}}(\textup{loc}_p(z_0^\chi))=\iota_p(z_0^\chi)$, with $\iota_p: L \hookrightarrow L_{v_0}$  is as in the introduction and $\textup{loc}_p: H^1(\QQ,T)\ra H^1(\QQ_p,T)$ is the canonical restriction map, as usual. We then have a commutative diagram
  $$\xymatrix@R=.25in@C=.2in{
  H^1(\QQ_p,T)\ar[d]_{\frak{r}_{\chi^{-1}}}&\otimes &H^1(\QQ_p,T^*)\ar[d]_{\frak{r}_\chi}\ar[rrr]^(.62){\langle\,,\,\rangle_{\textup{Tate}}}&&&\oo\ar@{=}[d]\\
  H^1(L_{v_0},\oo(1))&\otimes &H^1(L_{v_0},\oo)\ar[rrr]^(.62){\langle\,,\,\rangle_{\textup{Tate}}}&&&\oo\\
  }$$
  which translates to 
  \be\label{eqn:willbeuseful}\langle z_0^{\chi}, \phi_\alpha \rangle_{\textup{Tate}}=\langle \iota_p(z_0^{\chi}), \phi_\alpha^{v_0} \rangle_{\textup{Tate}}.
  \ee
  Let
  $$\frak{a}_v: H^1(L_v,\ZZ_p(1))=\widehat{L_v^{\times}} \lra G_v^{\textup{ab},p}$$
   denote the local reciprocity map. Let further 
   $$\frak{a}_v^{\textup{(ur)}}:\widehat{L_v^{\times}} \lra \textup{Gal}(L_v^{\textup{ur}}/L_v)$$
    denote the projection of $\frak{a}_v$ to the Galois group of the maximal unramified extension of $L_v$. We also write $\frak{a}_v$ (resp., $\frak{a}_v^{\textup{(ur)}}$) for the induced map $\oo\otimes_{\ZZ_p}\widehat{L_v^{\times}} \ra \oo\otimes_{\ZZ_p}G_v^{\textup{ab},p}$ (resp., for the map $\oo\otimes_{\ZZ_p}\widehat{L_v^{\times}} \ra \oo\otimes_{\ZZ_p}\textup{Gal}(L_v^{\textup{ur}}/L_v)$).

By the very definition of the local Tate pairing, 
$$\langle \iota_p(z_0^{\chi}), \phi_\alpha^{v_0} \rangle_{\textup{Tate}}=\phi_\alpha^{v_0}\left(\frak{a}_{v_0}(\iota_p(z_0^{\chi}))\right)=\phi_\alpha^{v_0}\left(\frak{a}_{v_0}^{\textup{(ur)}}(\iota_p(z_0^{\chi}))\right)$$ 
where we have the second equality because $\phi_\alpha^{v_0}$ is unramified by construction. Write 
$$\iota_p(z_0^{\chi})=\varpi_{v_0}^{v_0(z_0^{\chi})}\cdot u \in \oo\otimes_{\ZZ_p}\widehat{L_{v_0}^{\times}}=\oo\otimes_{\ZZ_p}\left(\varpi_{v_0}^{\ZZ_p}\oplus\widehat{\oo_{L_{v_0}}^\times}\right),$$ 
where $\varpi_{v_0}$ is a uniformizer of $L_{v_0}$ and $u \in \oo\otimes_{\ZZ_p}\widehat{\oo_{L_{v_0}}^\times}$ is a unit at $v_0$. Then 
$\frak{a}_{v_0}^{\textup{(ur)}}(\iota_p(z_0^{\chi}))=\textup{Fr}_{v_0}^{v_0(z_0^{\chi})}$ since $\frak{a}_{v_0}(u) \in \mathcal{I}_v\subset G_v$, the inertia subgroup at $v$. Thus 
$$\langle \iota_p(z_0^{\chi}), \phi_\alpha^{v_0} \rangle_{\textup{Tate}}=\phi_\alpha^{v_0}\left(\textup{Fr}_{v_0}^{v_0(z_0^{\chi})}\right)=v_0(z_0^\chi)\cdot\phi_\alpha^{v_0}\left(\textup{Fr}_{v_0}\right)$$ and this equals, by the definition of $\phi_\alpha^{v_0}$, to  $v_0(z_0^\chi)\cdot\alpha(v_0)$, which equals, by Proposition~\ref{prop:exp-pairing} to $\langle z_0^\chi,\alpha\rangle_{\textup{Nek}}$ and finally, by~(\ref{eqn:infofrom32}) and (\ref{eqn:willbeuseful}) to $\mathcal{L}_\xi^{\prime}(\phi_\alpha)$. This completes the proof.
\end{proof}

Next, we relate the right hand side of the statement of Theorem~\ref{main-primitive} to a special value of a $p$-adic $L$-function (that we call $L_{\xi,\Phi}$) which we construct below.

Let $\Phi_{\infty}$ denote the cyclotomic $\ZZ_p$-extension of $\QQ_p:=\Phi_0$, and let $\Phi_{n}$ denote the unique sub-extension of $\Phi_{\infty}/\QQ_p$ of degree $p^n$. Recall that $T^*=\textup{Hom}(T,\oo(1))\cong\oo(\chi)$. We set 
$$H^1_{\infty}(\QQ_p,T^*)=\varprojlim_n H^1(\Phi_{n},T^*),$$
where the inverse limit is taken with respect to norm maps. We may identify $\textup{Gal}(\Phi_\infty/\QQ_p)$ naturally by $\Gamma=\textup{Gal}(\QQ_\infty/\QQ)$. Let $\gamma$ be a topological generator for $\Gamma$ and let $\Lambda=\oo[[\Gamma]]$ as usual. 

\begin{lemma}
\label{lifting}
The natural map $H^1_{\infty}(\QQ_p,T^*) \ra H^1(\QQ_p,T^*)$ is surjective.
\end{lemma}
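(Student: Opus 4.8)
The plan is to deduce surjectivity of the corestriction (norm) map $H^1_\infty(\QQ_p,T^*)\to H^1(\QQ_p,T^*)$ from a standard control-theory argument, exactly as one does for Iwasawa cohomology of local fields. First I would invoke the identification $H^1_\infty(\QQ_p,T^*)=H^1(\QQ_p,T^*\otimes_\oo\Lambda)$ coming from Shapiro's lemma together with \cite[Proposition~II.1.1]{colmez-reciprocity}, which is already used elsewhere in the paper; under this identification the natural map to $H^1(\QQ_p,T^*)$ is the map induced by the augmentation $\Lambda\twoheadrightarrow\oo$, i.e. by the short exact sequence of $G_{\QQ_p}$-modules
\be
0\lra T^*\otimes_\oo\Lambda\stackrel{\gamma-1}{\lra}T^*\otimes_\oo\Lambda\lra T^*\lra 0.
\ee
The associated long exact cohomology sequence shows that the cokernel of $H^1(\QQ_p,T^*\otimes\Lambda)\to H^1(\QQ_p,T^*)$ injects into $H^2(\QQ_p,T^*\otimes\Lambda)[\gamma-1]$, so it suffices to prove that multiplication by $\gamma-1$ is injective on $H^2(\QQ_p,T^*\otimes\Lambda)$, equivalently that $H^2(\QQ_p,T^*\otimes\Lambda)$ has no nonzero $\gamma$-invariants.

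The key step is therefore the vanishing $H^2(\QQ_p,T^*\otimes\Lambda)_\Gamma=0$, which I would get by a second application of the same exact sequence: its long exact sequence gives a surjection $H^2(\QQ_p,T^*\otimes\Lambda)_\Gamma\twoheadrightarrow H^2(\QQ_p,T^*)$ is not quite what is needed; rather, the cokernel of $\gamma-1$ on $H^2(\QQ_p,T^*\otimes\Lambda)$ maps onto a subgroup of $H^2(\QQ_p,T^*)$, while $H^2(\QQ_p,T^*\otimes\Lambda)[\gamma-1]$ is what controls surjectivity one level down. The clean way is to compute $H^2(\QQ_p,T^*\otimes\Lambda)=\varprojlim_n H^2(\Phi_n,T^*)$ directly via local Tate duality: $H^2(\Phi_n,T^*)$ is Pontryagin-dual to $H^0(\Phi_n,T)=H^0(\Phi_n,\oo(1)\otimes\chi^{-1})$, and since $\chi$ has prime-to-$p$ order and order $>1$ (it is nontrivial) while $\oo(1)$ has no $G_{\Phi_n}$-invariants for any $n$ in the cyclotomic tower (the cyclotomic character is nontrivial on $G_{\Phi_n}$ because $\Phi_n/\QQ_p$ is the degree-$p^n$ layer, not containing all $p$-power roots of unity), we get $H^0(\Phi_n,T)=0$, hence $H^2(\Phi_n,T^*)=0$ for all $n$, hence $H^2(\QQ_p,T^*\otimes\Lambda)=0$. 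Feeding this back into the long exact sequence of the displayed short exact sequence immediately yields the surjectivity claimed.

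The main obstacle, such as it is, is bookkeeping rather than depth: one must be careful that the module appearing as $T^*\otimes\Lambda$ carries the correct (diagonal) Galois action so that the Colmez/Shapiro identification $H^i(\QQ_p,T^*\otimes\Lambda)\cong\varprojlim_n H^i(\Phi_n,T^*)$ is legitimate, and one must confirm the vanishing $H^0(\Phi_n,\oo(1)\otimes\chi^{-1})=0$ uniformly in $n$ — this is where the hypothesis that $\chi$ is nontrivial of prime-to-$p$ order does the work, since then $\oo(1)\otimes\chi^{-1}$ restricted to $G_{\Phi_\infty}$ is still a nontrivial character (the cyclotomic character remains nontrivial on $G_{\Phi_\infty}$, whose image is $1+p\ZZ_p$). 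Once these two points are in place the argument is a two-line diagram chase. Alternatively, if one prefers to avoid computing $H^2$, the same conclusion follows from the fact that for local Iwasawa cohomology the natural projection maps in degree one are always surjective with $\Lambda$-cofinitely generated kernel — a statement essentially contained in \cite[\S5]{nek} — but the direct computation above is self-contained and fits the level of generality of this section.
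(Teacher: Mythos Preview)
Your argument is correct and follows the same opening moves as the paper: identify $H^1_\infty(\QQ_p,T^*)$ with $H^1(\QQ_p,T^*\otimes\LL)$ via \cite[Proposition~II.1.1]{colmez-reciprocity}, pass to the long exact sequence for $0\to T^*\otimes\LL\xrightarrow{\gamma-1}T^*\otimes\LL\to T^*\to 0$, and reduce to showing $H^2(\QQ_p,T^*\otimes\LL)[\gamma-1]=0$.

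Where you diverge from the paper is in the handling of this last step. You compute $H^2(\QQ_p,T^*\otimes\LL)=\varprojlim_n H^2(\Phi_n,T^*)$ and kill each term by local duality, concluding $H^2(\QQ_p,T^*\otimes\LL)=0$ outright. The paper instead only checks the single vanishing $H^2(\QQ_p,T^*)=0$ (i.e., at level $n=0$), deduces that $(\gamma-1)$ is surjective on $H^2(\QQ_p,T^*\otimes\LL)$, and then invokes the fact that $H^2(\QQ_p,T^*\otimes\LL)$ is finitely generated over $\oo$ (citing \cite[Proposition~3.2.1]{pr}) together with the Noetherian fact that a surjective endomorphism of a finitely generated module over a Noetherian ring is injective (citing \cite[Theorem~2.4]{matsumura}). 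Your route is more elementary and yields the stronger statement $H^2(\QQ_p,T^*\otimes\LL)=0$; the paper's route avoids checking every layer but imports two outside results.

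Two small corrections to your write-up. First, the hypothesis on $\chi$ is a red herring for the local vanishing: since $\chi(p)=1$ and $\chi$ is unramified at $p$, the restriction $\chi|_{G_{\QQ_p}}$ is \emph{trivial}, so $T|_{G_{\Phi_n}}\cong\oo(1)$ and the vanishing $H^0(\Phi_n,T)=0$ comes entirely from the cyclotomic character. Second, the image of the cyclotomic character on $G_{\Phi_\infty}$ is $\mu_{p-1}$, not $1+p\ZZ_p$ (the latter is $\textup{Gal}(\Phi_\infty/\QQ_p)$, not the image of $G_{\Phi_\infty}$); either way the character is nontrivial and your conclusion stands.
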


\begin{proof}
By \cite[Proposition II.1.1]{colmez-reciprocity}, we have $H^1_{\infty}(\QQ_p,T^*)\cong H^1(\QQ_p, T^*\otimes\Lambda)$ and the map above is simply the reduction map modulo $\gamma-1$. Hence, the cokernel of this map is $H^2(\QQ_p,T^*\otimes\LL)[\gamma-1]$, the $\gamma-1$ torsion of $H^2(\QQ_p,T^*\otimes\LL)$. Since the cohomological dimension of $\textup{Gal}(\overline{\QQ}_p/\QQ_p)$ is 2, it follows that 
$$H^2(\QQ_p,T^*\otimes\LL)/(\gamma-1)\cong H^2(\QQ_p,T^*\otimes\LL/(\gamma-1))=H^2(\QQ_p,T^*),$$
 which  is trivial (by local duality). Thus we have an exact sequence 
 $$\xymatrix{
0\ar[r]& H^2(\QQ_p,T^*\otimes\LL)[\gamma-1]\ar[r] &H^2(\QQ_p,T^*\otimes\LL) \ar[r]^(0.5){\gamma-1}& H^2(\QQ_p,T^*\otimes\LL) \ar[r]&0.}$$
It is know that $H^2(\QQ_p,T^*\otimes\LL)$ is an $\oo$-module of finite type (c.f., \cite[ Proposition 3.2.1]{pr}), thus it follows from~\cite[Theorem 2.4]{matsumura} that $H^2(\QQ_p,T^*\otimes\LL)[\gamma-1]=0$ as well, hence the lemma is proved. 
\end{proof}

By Lemma~\ref{lifting}, it is possible to choose $\Phi=\{\phi_\alpha^{(n)}\}_{n\geq 0} \in H^1_{\infty}(\QQ_p,T^*)$ such that $\phi_\alpha^{(0)}=\phi_\alpha$.

\begin{define}
\label{measure}
Attached to $\xi$ and $\Phi$, define an $\oo$-valued measure $\mu_{\xi, \Phi}$ on $\Gamma$ as follows: For $\tau \in \Gamma$, set 
$$\mu_{\xi,\Phi}(\tau\Gamma^{p^n})=\mathcal{L}_\xi(\tau\phi_\alpha^{(n)}).$$ 
\end{define}
The fact that $\mu_{\xi,\Phi}$ is a distribution follows from the fact that the collection $\{\Phi_\alpha^{(n)}\}_{n}$ is norm-compatible.

We  define the ``$p$-adic $L$-function'' associated to $\xi$ and $\Phi$ by setting 
$$L_{\xi,\Phi}(\eta)=\int_\Gamma\eta \,d\mu_{\xi,\Phi}$$
for each character $\eta:\Gamma \ra \ZZ_p^\times$. Let $\pmb{1}$ be the trivial character, and $\rho_{\textup{cyc}}:\Gamma\ra 1+p\ZZ_p$ be the cyclotomic character. We define the \emph{derivative at the trivial character} $\pmb{1}$ as 
$$L_{\xi,\Phi}^\prime(\pmb{1}):=\frac{d}{ds}L_{\xi,\Phi}(\rho_{\textup{cyc}}^s)\Big{|}_{s=0}.$$
We also define $\frak{P}_{\xi,\Phi}\in \LL$ to be the power series associated with the measure $\mu_{\xi,\Phi}$.
\begin{rem}
\label{rem:observationabuse}
Define 
$$P_n(\mu_{\xi,\Phi}):=\sum_{\tau\in \Gamma/\Gamma^{p^n}}\mu_{\xi,\Phi}(\tau\Gamma^{p^n})\cdot\tau\in \oo[\Gamma/\Gamma^{p^n}],$$
so that $\frak{P}_{\xi,\Phi}=\lim_n P_n(\mu_{\xi,\Phi}) \in \oo[[\Gamma]].$ For the powers  $\rho_{\textup{cyc}}^s:\Gamma \ra 1+p\ZZ_p$ of the cyclotomic character, observe that 
\be\label{eqn:usefulidentity}
\rho_{\textup{cyc}}^s(\frak{P}_{\xi,\Phi})=\lim_{n\ra\infty} \sum_{\tau\in \Gamma/\Gamma^{p^n}}\mu_{\xi,\Phi}(\tau\Gamma^{p^n})\cdot\rho_{\textup{cyc}}^s(\widetilde{\tau}).
\ee
Here, $\widetilde{\tau} \in \Gamma$ stands for an arbitrary lift of $\tau \in \Gamma/\Gamma^{p^n}$, and it is not hard  to see that the limit above does not depend on the choice of these lifts although  each  sum does depend on this choice. We therefore see that 
$\rho_{\textup{cyc}}^s(\frak{P}_{\xi,\Phi})=L_{\xi,\Phi}(\rho_{\textup{cyc}}^s)$, which in turn implies that 
$$\frac{d}{ds}\rho_{\textup{cyc}}^s(\frak{P}_{\xi,\Phi})\Big{|}_{s=0}=L_{\xi,\Phi}^\prime(\pmb{1}).$$
\end{rem}
\begin{prop}
\label{prop:main}
$
\mathcal{L}_{\xi}^{\prime}(\phi_\alpha)=L_{\xi,\Phi}^{\prime}(\pmb{1})$.
\end{prop}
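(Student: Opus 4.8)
The plan is to unwind the two sides of the asserted equality down to the same expression involving the power series $\frak{P}_{\xi,\Phi}$ and the cyclotomic character. By Remark~\ref{rem:observationabuse}, the right-hand side $L_{\xi,\Phi}^\prime(\pmb 1)$ equals $\frac{d}{ds}\rho_{\textup{cyc}}^s(\frak{P}_{\xi,\Phi})\big|_{s=0}$, so it suffices to identify this derivative with $\mathcal{L}_\xi^\prime(\phi_\alpha)$. First I would recall from Definition~\ref{def:maps} that $\mathcal{L}_\xi^\prime(\phi_\alpha)$ is obtained by applying the map $\tau_\infty\circ\textup{loc}_p$ to $z_\infty^\chi$ and then evaluating against $\phi_\alpha \in H^1(\QQ_p,T^*)$; equivalently, by Remark~\ref{rem:local calculation}, $\mathcal{L}_\xi^\prime(\phi_\alpha) = \langle z_0^\chi,\phi_\alpha\rangle_{\textup{Tate}}$, where $z_0^\chi = \mathbf{N}_{\QQ_n/\QQ}\, z_n^\chi$ is the image of $z_\infty^\chi$ under projection. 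So the target identity becomes $\langle z_0^\chi,\phi_\alpha\rangle_{\textup{Tate}} = \frac{d}{ds}\rho_{\textup{cyc}}^s(\frak{P}_{\xi,\Phi})\big|_{s=0}$.

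Next I would expand the left-hand side of Definition~\ref{measure} in terms of $z_\infty^\chi$. The key point is the relation from Proposition~\ref{prop:division}: $\frac{\gamma-1}{\log_p\rho_{\textup{cyc}}(\gamma)}\cdot z_\infty^\chi = \xi$. Pairing $\xi$ against the norm-compatible system $\Phi = \{\phi_\alpha^{(n)}\}$ via the local Tate pairings at each level and tracking through $\tau_\infty$, the value $\mathcal{L}_\xi(\tau\phi_\alpha^{(n)})$ is the $\tau$-component of a Tate-pairing expression built from $\xi$. Writing everything in the Iwasawa algebra, $\frak{P}_{\xi,\Phi} = \lim_n P_n(\mu_{\xi,\Phi})$ will be seen to equal $\frac{\gamma-1}{\log_p\rho_{\textup{cyc}}(\gamma)}$ times the power series attached to the pairing of $z_\infty^\chi$ against $\Phi$ — call the latter $\frak{Q}$. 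Then $\rho_{\textup{cyc}}^s(\frak{P}_{\xi,\Phi}) = \frac{\rho_{\textup{cyc}}(\gamma)^s - 1}{\log_p\rho_{\textup{cyc}}(\gamma)}\cdot \rho_{\textup{cyc}}^s(\frak{Q})$. Since $\rho_{\textup{cyc}}(\gamma)^s - 1$ vanishes at $s=0$ with derivative $\log_p\rho_{\textup{cyc}}(\gamma)$, differentiating and setting $s=0$ kills the second factor's $s$-dependence and leaves exactly $\rho_{\textup{cyc}}^0(\frak{Q}) = $ the trivial-character specialization of $\frak{Q}$, which is the norm of $z_\infty^\chi$ paired with $\phi_\alpha^{(0)} = \phi_\alpha$, i.e. $\langle z_0^\chi,\phi_\alpha\rangle_{\textup{Tate}}$.

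The main obstacle I anticipate is the bookkeeping in passing between the inverse-limit description of $\mathcal{L}_\xi$ (pairing along the whole tower and recording $\tau$-components) and the power-series/measure formalism, making precise that the operator $\frac{\gamma-1}{\log_p\rho_{\textup{cyc}}(\gamma)}$ acting on $z_\infty^\chi$ corresponds under these identifications to multiplying $\frak{Q}$ by the group-ring element $\frac{\gamma-1}{\log_p\rho_{\textup{cyc}}(\gamma)}$, and that specialization of the resulting power series at $\rho_{\textup{cyc}}^s$ commutes with the limit defining $\frak{P}_{\xi,\Phi}$ (this last point is the content of the lift-independence observation in Remark~\ref{rem:observationabuse}, which I would cite rather than reprove). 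Once that dictionary is set up, the computation with $\frac{\rho_{\textup{cyc}}(\gamma)^s-1}{\log_p\rho_{\textup{cyc}}(\gamma)}$ and the chain rule at $s=0$ is routine. Chaining this with Theorem~\ref{main-primitive} then also recovers $\langle c_0^\chi,\alpha\rangle_{\textup{Nek}} = L_{\xi,\Phi}^\prime(\pmb 1)$, which is Theorem~A.
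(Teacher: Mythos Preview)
Your proposal is correct and takes a somewhat different route from the paper's argument. The paper, following Rubin, introduces an auxiliary ``derivative'' construction $\textup{Der}_{\rho_{\textup{cyc}}}(\mathcal{L}_\xi)$ as an explicit limit of weighted finite sums, proves via a separate lemma (Lemma~\ref{lem3.1}) that this limit is well-defined independently of the chosen lift $\Phi$, then identifies $\textup{Der}_{\rho_{\textup{cyc}}}(\mathcal{L}_\xi)$ with $\mathcal{L}_\xi^\prime$ (Remark~\ref{rem:compareprime}) and finally with the integral $\int_\Gamma \log_p\rho_{\textup{cyc}}\,d\mu_{\xi,\Phi}=L_{\xi,\Phi}^\prime(\pmb{1})$. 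Your approach bypasses $\textup{Der}$ entirely: you factor the power series $\frak{P}_{\xi,\Phi}=\frac{\gamma-1}{\log_p\rho_{\textup{cyc}}(\gamma)}\cdot\frak{Q}$ using Proposition~\ref{prop:division} and the $\Lambda$-equivariance of the Tate pairing along the tower, then apply the product rule to $\rho_{\textup{cyc}}^s$ at $s=0$. This is arguably cleaner, and the lift-independence of $L_{\xi,\Phi}^\prime(\pmb{1})$ (Remark~\ref{rem:dependance}) falls out automatically since $\pmb{1}(\frak{Q})=\langle z_0^\chi,\phi_\alpha\rangle_{\textup{Tate}}$ visibly depends only on $\phi_\alpha^{(0)}$. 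What the paper's route buys is a closer parallel to \cite{ru94}, where the $\textup{Der}$ construction is the organizing principle; your factorization argument is more self-contained but less portable to settings where no analogue of $z_\infty^\chi$ is available. One small correction: your appeal to Remark~\ref{rem:observationabuse} for the ``commutation with the limit'' is slightly off---that remark addresses independence of the lifts $\widetilde{\tau}$ of $\tau\in\Gamma/\Gamma^{p^n}$, not of the lift $\Phi$; the point you need is simply that $\frak{Q}$ is a well-defined element of $\Lambda$, which follows from the norm-coherence of both $\{z_n^\chi\}$ and $\{\phi_\alpha^{(n)}\}$.
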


\begin{rem}
\label{rem:dependance}
Note that the left hand side of the equality in Proposition~\ref{prop:main} depends only on  $\phi_\alpha$, not on its lift $\Phi$; whereas the right hand side depends  a priori on $\Phi$. Hence Proposition~\ref{prop:main} shows in particular that $L_{\xi,\Phi}^{\prime}(\pmb{1})$ does only depend on $\phi_\alpha$, and not on the lifting $\Phi$.
\end{rem}

\begin{cor}
\label{main}
$\langle c_1^\chi, \alpha\rangle_{\textup{Nek}}=
L_{\xi,\Phi}^{\prime}(\pmb{1})$.
\end{cor}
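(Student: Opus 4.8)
The statement is the concatenation of two results already in hand: Theorem~\ref{main-primitive} gives $\langle c_0^\chi,\alpha\rangle_{\textup{Nek}}=\mathcal{L}_\xi^{\prime}(\phi_\alpha)$, and Proposition~\ref{prop:main} gives $\mathcal{L}_\xi^{\prime}(\phi_\alpha)=L_{\xi,\Phi}^{\prime}(\pmb{1})$; chaining the two yields $\langle c_0^\chi,\alpha\rangle_{\textup{Nek}}=L_{\xi,\Phi}^{\prime}(\pmb{1})$, which is the assertion. So the only genuine content lies in Proposition~\ref{prop:main}, and the plan is to establish that; the Corollary then follows formally by invoking Theorem~\ref{main-primitive}. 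It is worth noting that this is exactly where the exceptional zero enters: $L_{\xi,\Phi}(\pmb 1)=\mu_{\xi,\Phi}(\Gamma)=\mathcal{L}_\xi(\phi_\alpha)=\langle\mathbb{N}(\textup{loc}_p\xi),\phi_\alpha\rangle_{\textup{Tate}}=\langle\textup{loc}_p(\xi_L^\chi),\phi_\alpha\rangle_{\textup{Tate}}=0$ by Lemma~\ref{lem:vanishing}, so that $L_{\xi,\Phi}$ really does vanish at $\pmb 1$ and its leading term is a first derivative.

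To prove Proposition~\ref{prop:main} I would first identify the power series $\frak{P}_{\xi,\Phi}\in\LL$ of the measure $\mu_{\xi,\Phi}$ with an Iwasawa-theoretic local pairing. Assembling the level-$n$ sums $\sum_{\tau\in\Gamma/\Gamma^{p^n}}\langle\textup{loc}_p(\xi),\tau\phi_\alpha^{(n)}\rangle_{\textup{Tate}}\,\tau\in\oo[\Gamma/\Gamma^{p^n}]$, which are precisely $P_n(\mu_{\xi,\Phi})$ by Definition~\ref{measure}, and passing to the limit — the limit existing exactly because $\Phi=\{\phi_\alpha^{(n)}\}$ is norm-coherent, which is how Lemma~\ref{lifting} is used — realizes $\frak{P}_{\xi,\Phi}$ as the image of $\textup{loc}_p(\xi)$ under the $\LL$-linear pairing map $H^1(\QQ_p,T\otimes\LL)\to\LL$ attached to $\Phi$ (using \cite[Proposition~II.1.1]{colmez-reciprocity}). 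Next, by Remark~\ref{rem:observationabuse} one has $L_{\xi,\Phi}^{\prime}(\pmb 1)=\frac{d}{ds}\rho_{\textup{cyc}}^s(\frak{P}_{\xi,\Phi})\big|_{s=0}$, so it suffices to compute that derivative. Feeding in Proposition~\ref{prop:division}, namely $\xi=\tfrac{\gamma-1}{\log_p\rho_{\textup{cyc}}(\gamma)}\,z_\infty^\chi$, and using $\LL$-linearity of the pairing, one gets $\frak{P}_{\xi,\Phi}=\tfrac{\gamma-1}{\log_p\rho_{\textup{cyc}}(\gamma)}\,\frak{G}$, where $\frak{G}\in\LL$ is the corresponding pairing of $\textup{loc}_p(z_\infty^\chi)$ against $\Phi$. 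Since $\rho_{\textup{cyc}}^s(\gamma-1)=\rho_{\textup{cyc}}(\gamma)^s-1$ vanishes at $s=0$, a one-line Leibniz computation then gives $\frac{d}{ds}\rho_{\textup{cyc}}^s(\frak{P}_{\xi,\Phi})\big|_{s=0}=\pmb 1(\frak{G})$. Finally $\pmb 1(\frak{G})=\langle\mathbb{N}(\textup{loc}_p z_\infty^\chi),\phi_\alpha\rangle_{\textup{Tate}}=\langle\textup{loc}_p(z_0^\chi),\phi_\alpha\rangle_{\textup{Tate}}$, as $z_0^\chi$ is by construction the bottom layer of $z_\infty^\chi$; and by Remark~\ref{rem:local calculation} this last quantity is exactly $\mathcal{L}_\xi^{\prime}(\phi_\alpha)$ (the image of $z_0^\chi$ under $\tau_0$, evaluated at $\phi_\alpha$). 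Hence $\mathcal{L}_\xi^{\prime}(\phi_\alpha)=L_{\xi,\Phi}^{\prime}(\pmb 1)$, which is Proposition~\ref{prop:main}, and the Corollary follows.

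The main obstacle is not any single deep input but the bookkeeping in the middle steps: one must pin down the precise $\LL$-module structures on $H^1(\QQ_p,T\otimes\LL)$ and on $\varprojlim_n H^1(\Phi_n,T^*)$, together with the transpose involution on $\LL$, carefully enough that the identification $\frak{P}_{\xi,\Phi}=\langle\textup{loc}_p(\xi),\Phi\rangle_\LL$ holds on the nose and that pushing the relation of Proposition~\ref{prop:division} through the pairing produces the factor $\gamma-1$ \emph{with the correct sign} (rather than $\gamma^{-1}-1$, which would flip a sign upon differentiating $\rho_{\textup{cyc}}^s$). One also has to check that the measure-theoretic differentiation of Remark~\ref{rem:observationabuse} is compatible with this cohomological division — that is, that evaluating $\frak{P}_{\xi,\Phi}$ at powers of $\rho_{\textup{cyc}}$ genuinely computes the corresponding specializations of the Iwasawa pairing. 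None of this is hard, but it is where all the care goes; everything else is formal, and once Proposition~\ref{prop:main} is in place the Corollary is immediate from Theorem~\ref{main-primitive}.
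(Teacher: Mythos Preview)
Your identification of the Corollary as the concatenation of Theorem~\ref{main-primitive} and Proposition~\ref{prop:main} is exactly how the paper treats it: the Corollary is stated without proof immediately after Proposition~\ref{prop:main}, and the proof of the latter is deferred to the end of the section.

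Your argument for Proposition~\ref{prop:main}, however, takes a genuinely different route from the paper's. The paper follows Rubin's original strategy from~\cite{ru94}: it introduces an auxiliary derivative $\textup{Der}_{\rho_{\textup{cyc}}}(\mathcal{L}_\xi)(\mu):=\lim_n\sum_\tau\log_p\rho_{\textup{cyc}}(\tau)\cdot\mathcal{L}_\xi(\tau\mu^{(n)})$ (Definition~\ref{alternative-derivative}), proves via a group-ring computation that this limit exists and is independent of the chosen lift (Lemma~\ref{lem3.1}), identifies it with $\mathcal{L}_\xi^\prime$ (Remark~\ref{rem:compareprime}), and then matches it with $L_{\xi,\Phi}^\prime(\pmb 1)=\int_\Gamma\log_p\rho_{\textup{cyc}}\,d\mu_{\xi,\Phi}$ by differentiating under the integral sign. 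You instead exploit the algebraic factorization $\xi=\frac{\gamma-1}{\log_p\rho_{\textup{cyc}}(\gamma)}\,z_\infty^\chi$ of Proposition~\ref{prop:division} directly: realize $\frak{P}_{\xi,\Phi}$ as a $\LL$-valued Iwasawa pairing of $\textup{loc}_p(\xi)$ against $\Phi$, push the factor $(\gamma-1)$ through by $\LL$-linearity in the first variable to get $\frak{P}_{\xi,\Phi}=\frac{\gamma-1}{\log_p\rho_{\textup{cyc}}(\gamma)}\,\frak{G}$, and differentiate via Remark~\ref{rem:observationabuse}. This bypasses Lemma~\ref{lem3.1} and the auxiliary $\textup{Der}$ entirely --- the well-definedness of the derivative is automatic once $\frak{G}\in\LL$ exists --- at the cost of the bookkeeping you flag (which, incidentally, resolves cleanly: the map $\xi\mapsto\frak{P}_{\xi,\Phi}$ is genuinely $\LL$-linear in $\xi$, no involution, so the factor really is $\gamma-1$). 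The paper's approach has the virtue of making the parallel with~\cite[\S3 and Proposition~7.1]{ru94} transparent; yours is more structural and shorter.
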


The proof of Proposition~\ref{prop:main} will be completed in a few steps, all of which are essentially borrowed from~\cite{ru94} with minor alterations. 

\begin{define}
\label{alternative-derivative}
Suppose $\mu=\mu^{(0)} \in H^1(\Phi_0,T^*)$ and 
$\mu=\{\mu^{(n)}\} \in \varprojlim H^1(\Phi_n,T^*)$. Define 
$$\textup{Der}_{\rho_{cyc}}(\mathcal{L}_{\xi})(\mu):=\lim_{n\ra\infty} \sum_{\tau\in \textup{Gal}(\QQ_n/\QQ)}\log_p(\rho_{\textup{cyc}}(\tau))\cdot \mathcal{L}_\xi(\tau\mu^{(n)}).$$ 
As the notation suggests, this definition only depends only on $\mu$, not on the lift $\mu$. This fact will follow from Lemma~\ref{lem3.1} below (where we also prove that the limit above exists). 
\end{define}

\begin{lemma}
\label{lem3.1}
Suppose $\nu \in H^1(\Phi_n,T^*)$ is such that $\mathbf{N}_{\Phi_n/\Phi_0}(\nu)=0$. Then 
$$\sum_{\tau\in \textup{Gal}(\QQ_n/\QQ)}\log_p(\rho_{\textup{cyc}}(\tau))\cdot \mathcal{L}_\xi(\tau\nu) \equiv 0 \mod p^n.$$
\end{lemma}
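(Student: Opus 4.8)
The plan is to collapse the whole sum into a single local Tate pairing at level $n$, then combine the division relation of Proposition~\ref{prop:division} with an elementary identity in the group ring $\oo[\Gamma/\Gamma^{p^n}]$.

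First I would unwind Definition~\ref{def:maps}(1) with the help of the commutative diagram in Remark~\ref{rem:local calculation}: if $\xi_n^\chi\in H^1(\QQ_n,T)$ denotes the level-$n$ component of $\xi$, then for $\nu\in H^1(\Phi_n,T^*)$ and $\tau\in\textup{Gal}(\QQ_n/\QQ)\cong\textup{Gal}(\Phi_n/\Phi_0)$ one has $\mathcal{L}_\xi(\tau\nu)=\langle\textup{loc}_p(\xi_n^\chi),\tau\nu\rangle_{\textup{Tate}}$, the local Tate pairing over $\Phi_n$. By $\oo$-linearity of the pairing in the second variable,
$$\sum_{\tau\in\textup{Gal}(\QQ_n/\QQ)}\log_p(\rho_{\textup{cyc}}(\tau))\,\mathcal{L}_\xi(\tau\nu)=\big\langle\textup{loc}_p(\xi_n^\chi),\,\theta_n\cdot\nu\big\rangle_{\textup{Tate}},\qquad \theta_n:=\log_p(\rho_{\textup{cyc}}(\gamma))\sum_{a=0}^{p^n-1}a\,\gamma^{a}\in\oo[\Gamma/\Gamma^{p^n}],$$
where I have fixed the lifts $\gamma^{a}$ of the elements of $\Gamma/\Gamma^{p^n}$; a different choice of lifts changes $\theta_n$ (hence the right-hand side) by a multiple of $p^{n}$, so this is harmless for the congruence we are after.

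Next comes the arithmetic of $\theta_n$. A one-line telescoping computation in $\oo[\Gamma/\Gamma^{p^n}]$ (using $\gamma^{p^n}=1$) gives $(\gamma-1)\sum_{a=0}^{p^n-1}a\gamma^{a}=p^{n}-N_n$, where $N_n=\sum_{\tau}\tau$ is the norm element; hence $(\gamma-1)\theta_n=\log_p(\rho_{\textup{cyc}}(\gamma))(p^{n}-N_n)$ and therefore $(\gamma^{-1}-1)\theta_n=-\gamma^{-1}(\gamma-1)\theta_n=-\log_p(\rho_{\textup{cyc}}(\gamma))(p^{n}\gamma^{-1}-N_n)$. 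Applying the latter to $\nu$ and using $N_n\nu=\textup{res}_{\Phi_n/\Phi_0}\big(\mathbf{N}_{\Phi_n/\Phi_0}(\nu)\big)=0$, which is the hypothesis, I obtain $(\gamma^{-1}-1)(\theta_n\nu)=-\log_p(\rho_{\textup{cyc}}(\gamma))\,p^{n}\,\gamma^{-1}\nu$.

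Finally I would feed in the division relation. Proposition~\ref{prop:division} at level $n$ reads $(\gamma-1)z_n^\chi=\log_p(\rho_{\textup{cyc}}(\gamma))\,\xi_n^\chi$ in $H^1(\QQ_n,T)$; since $\textup{loc}_p$ is $\Gamma$-equivariant and the local Tate pairing is Galois-equivariant (whence $\langle(\gamma-1)x,y\rangle_{\textup{Tate}}=\langle x,(\gamma^{-1}-1)y\rangle_{\textup{Tate}}$), we get
$$\log_p(\rho_{\textup{cyc}}(\gamma))\big\langle\textup{loc}_p(\xi_n^\chi),\theta_n\nu\big\rangle_{\textup{Tate}}=\big\langle(\gamma-1)\textup{loc}_p(z_n^\chi),\theta_n\nu\big\rangle_{\textup{Tate}}=\big\langle\textup{loc}_p(z_n^\chi),(\gamma^{-1}-1)\theta_n\nu\big\rangle_{\textup{Tate}}=-\log_p(\rho_{\textup{cyc}}(\gamma))\,p^{n}\big\langle\textup{loc}_p(z_n^\chi),\gamma^{-1}\nu\big\rangle_{\textup{Tate}}.$$
Cancelling the nonzero scalar $\log_p(\rho_{\textup{cyc}}(\gamma))$ (legitimate since $\oo$ is a domain) leaves $\sum_{\tau}\log_p(\rho_{\textup{cyc}}(\tau))\mathcal{L}_\xi(\tau\nu)=-p^{n}\langle\textup{loc}_p(z_n^\chi),\gamma^{-1}\nu\rangle_{\textup{Tate}}\in p^{n}\oo$, which is the assertion — and, read in reverse, it also shows that the limit in Definition~\ref{alternative-derivative} converges and is independent of the chosen lift $\pmb{\mu}$. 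I expect the only real friction to be bookkeeping: checking carefully that $\mathcal{L}_\xi(\tau\nu)$ is literally the level-$n$ Tate pairing against $\textup{loc}_p(\xi_n^\chi)$ (this is exactly what Remark~\ref{rem:local calculation} provides), keeping a single consistent set of lifts throughout the group-ring manipulation, and confirming that the $\gamma^{-1}$-twists introduced along the way do not affect the conclusion modulo $p^{n}$.
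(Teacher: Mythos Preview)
Your proof is correct and takes a genuinely different route from the paper's. The paper works modulo $p^n$ from the outset: it sets $\delta=\sum_\tau\log_p(\rho_{\textup{cyc}}(\tau))\,\tau^{-1}\in(\ZZ/p^n\ZZ)[G]$, uses $\xi_L^\chi=1$ (Lemma~\ref{lem:vanishing}) to show $(\sigma-1)\delta\mathcal{L}=0$ so that $\delta\mathcal{L}$ is $G$-invariant, and then proves via a separate claim (relying on Lemma~\ref{lifting}) that every $G$-invariant homomorphism $H^1(\Phi_n,T^*)\to\oo/p^n$ factors through the norm, whence $\delta\mathcal{L}(\nu)=g(\mathbf{N}\nu)=0$. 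You instead work integrally: you unwind $\mathcal{L}_\xi$ as the level-$n$ Tate pairing against $\textup{loc}_p(\xi_n^\chi)$, invoke the division relation $(\gamma-1)z_n^\chi=\log_p(\rho_{\textup{cyc}}(\gamma))\,\xi_n^\chi$ of Proposition~\ref{prop:division} (which already encodes $\xi_L^\chi=1$), and shift $(\gamma-1)$ across the pairing to obtain the sum \emph{exactly} as $-p^n\langle\textup{loc}_p(z_n^\chi),\gamma^{-1}\nu\rangle_{\textup{Tate}}$. Your argument is more direct, yields an explicit formula rather than a bare congruence, and bypasses the auxiliary surjectivity claim (hence Lemma~\ref{lifting}) that the paper needs. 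The paper's approach, by contrast, is closer to Rubin's original and stays at the level of abstract $\oo[G]$-module structure without ever identifying $\mathcal{L}_\xi$ concretely with the Tate pairing.
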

\begin{proof}
Fix $n$ and to ease notation, set $\mathcal{L}=\mathcal{L}_\xi \Big{|}_{H^1(\Phi_n,T^*)} \in \textup{Hom}\left(H^1(\Phi_n,T^*),\oo\right)$ and $G=\textup{Gal}(\QQ_n/\QQ)$. Write 
$$\delta =\sum_{\tau \in G} \log_p\left(\rho_{\textup{cyc}}(\tau)\right)\cdot\tau^{-1} \in \ZZ/p^n\ZZ[G].
$$
 Note that the claim of the Lemma is equivalent to showing that 
 \be\label{eqn:reduction} \delta\mathcal{L}(\nu)=0 \,\,(\hbox{in } \oo/p^n\oo ).
 \ee 
It is easy to see that 
\begin{align*}
 (\sigma-1)\delta&=\log_p\left( \rho_{\textup{cyc}}(\sigma)\right)\sum_{\tau \in G} \tau \\
&=\log_p(\rho_{\textup{cyc}}(\sigma))\cdot\textbf{N}_{\Phi_n/\Phi_0} \hbox{ , for all } \sigma \in G,
 \end{align*}
hence it follows that 
$$(\sigma-1)\delta\al=\log_p(\rho_{\textup{cyc}}(\sigma))\cdot\textbf{N}_{\Phi_n/\Phi_0}\al=0,$$
 where we have the final equality because $\al\big{|}_{H^1(\Phi_0,T^*)}=0$ by Lemma~\ref{lem:vanishing}. This is equivalent to saying that 
\be
\label{eqn:invariance of al}
\delta\al \in \textup{Hom}(H^1(\Phi_n,T^*),\oo/p^n\oo)^G.
\ee
Consider the map 
$$\xymatrix{ \textbf{N}^*: \textup{Hom}(H^1(\Phi_0,T^*),\oo/p^n\oo)\ar[rr]^(.51){-\,\,\circ\,\,\textbf{N}_{\Phi_n/\Phi_0}}&&  \textup{Hom}(H^1(\Phi_n,T^*),\oo/p^n\oo)^G.
}$$ 
Note that both of the sides of above are finite and the map $\textbf{N}^*$ is injective by Lemma~\ref{lifting}.  Claim below proves that there is an isomorphism 
$$ \textup{Hom}(H^1(\Phi_n,T^*),\oo/p^n\oo)^G\cong \textup{Hom}(H^1(\Phi_0,T^*),\oo/p^n\oo)$$
 which in turn implies that $\textbf{N}^*$ is surjective as well:
\begin{claim}
 \label{claim}
$ \textup{Hom}(H^1(\Phi_n,T^*),\oo/p^n\oo)^G\cong \textup{Hom}(H^1(\Phi_0,T^*),\oo/p^n\oo).$
\end{claim}
\begin{proof}[Proof of the Claim:]
By slight abuse, we let $\gamma$ denote a generator of $G$. Then, an element $\psi \in  \textup{Hom}(H^1(\Phi_n,T^*),\oo/p^n\oo)$ is fixed by $G$ if and only if 
\begin{align*}
\gamma^{-1}\psi=\psi &\iff \psi(\gamma x)=\psi(x) \hbox{ for all } x \in H^1(\Phi_n,T^*) \\
&\iff \psi((\gamma-1)x)=0 \hbox{ for all } x \in H^1(\Phi_n,T^*)\\
&\iff \psi \hbox{ factors through } H^1(\Phi_n,T^*)/(\gamma-1) \cong H^1(\Phi_0,T^*).
\end{align*}
where the very last isomorphism comes from the proof of Lemma~\ref{lifting}.
\end{proof}
We are now ready to complete the proof of Lemma~\ref{lem3.1}. It follows from our conclusion that $\textbf{N}^*$ is surjective that there exists  $g \in \textup{Hom} (H^1(\Phi_0,T^*),\oo/p^n\oo)$ such that $\delta\al =g\circ \textbf{N}_{\Phi_n/\Phi}$, hence
 $$\delta\al(\nu)=g(\textbf{N}_{\Phi_n/\Phi_0}(\nu))=0 \hbox{ in } \oo/p^n\oo.$$
  This is exactly the statement of (\ref{eqn:reduction}).

\end{proof}
\begin{rem}
\label{rem:compareprime}
As in the remark following Lemma 3.1 of~\cite{ru94}, one can check that
$$\textup{Der}_{\rho_{cyc}}(\mathcal{L}_{\xi})=
\mathcal{L}_\xi^\prime$$ 
using the fact that $H^1(\QQ_p,T\otimes\LL)$ has no $(\gamma-1)$-torsion. Here the equality takes place in $\textup{Hom}\left(H^1(\Phi_0,T^*),\oo\right)$. Note that the term involving the $p$-adic logarithm in loc.cit. does not appear here because we have already normalized $z_\infty^{\chi}$ by the factor $\log_p\rho_{\textup{cyc}}(\gamma)$.  
\end{rem}

\begin{proof}[Proof of Proposition~\ref{prop:main}] (Compare to~\cite[Proposition 7.1(ii)]{ru94}) By Remark~\ref{rem:compareprime}, 
\begin{align*}
\al_\xi^\prime(\phi_\alpha)&=\lim_{n\ra\infty}\sum_{\tau \in \textup{Gal}(\QQ_n/\QQ)} \log_p\rho_{\textup{cyc}}(\tau)\cdot \al_\xi(\tau\Phi^{(n)}_\alpha)\\
&=\lim_{n\ra\infty}\sum_{\tau \in \textup{Gal}(\QQ_n/\QQ)} \log_p\rho_{\textup{cyc}}(\tau)\mu_{\xi,\Phi}(\tau\Gamma^{p^n})\\
&=\int_\Gamma \log_p\rho_{\textup{cyc}}\cdot d\mu_{\xi,\Phi}.
\end{align*}
On the other hand $$\frac{d}{ds}\rho_{\textup{cyc}}^s=(\log_p\rho_{\textup{cyc}})\rho_{\textup{cyc}}^s,$$
hence 
\begin{align*}
L_{\xi,\Phi}^{\prime}(\pmb{1})=\frac{d}{ds}\left(\int_\Gamma \rho_{\textup{cyc}}^s\cdot d\mu_{\xi,\Phi}\right)\Bigg|_{s=0}&=\left(\int_\Gamma (\log_p\rho_{\textup{cyc}})\rho_{\textup{cyc}}^s\cdot d\mu_{\xi,\Phi}\right)\Bigg|_{s=0}\\
&=\int_\Gamma \log_p\rho_{\textup{cyc}}\cdot d\mu_{\xi,\Phi} \\
&=
\al_\xi^\prime(\phi_\alpha).
\end{align*}
\end{proof}

\section{$p$-adic $L$-functions and Nekov\'a\v{r}'s height pairing}
\label{sec:KLNek}
In this section, we obtain a formula for the leading term of  an \emph{imprimitive Kubota-Leopoldt $p$-adic $L$-function} in terms of Nekov\'{a}\v{r}'s height pairing, much in the spirit of a $p$-adic Gross-Zagier formula, using the Rubin-style formula we proved above. This in particular suggests a new interpretation of the classical $p$-adic Kronecker limit formula (c.f., \cite[Theorem 5.18]{w}, \cite[\S2.5]{deshalit}) and the formula of Ferrero-Greenberg~\cite{ferrerogreenberg}. 
\subsection{$p$-adic $L$-functions}
In this section, we give an overview of the well-known construction of the Kubota-Leopoldt $p$-adic $L$-function (resp., Katz's two variable $p$-adic $L$-function) using cyclotomic units (resp., elliptic units).
\subsubsection{Cyclotomic units and the Kubota-Leopoldt $p$-adic $L$-function}
\label{subsubsec:cycloprep}
Let $\omega: G_\QQ \ra (\ZZ_p^{\times})_{\textup{tors}}$ denote the Teichm\"uller character giving the action of $G_\QQ$ on the $p$-th roots of unity $\mu_p$. Fix an embedding $\oo \hookrightarrow \overline{\QQ}_p \hookrightarrow \mathbb{C}$ so that one can identify complex and $p$-adic characters of finite order of $G_{\QQ}$. Via this identification, a character $\rho$ of $\Gamma$ of finite order naturally extends to an $\oo$-algebra homomorphism $\rho: \LL \ra \overline{\QQ}_p$. 

For a character $\rho:G_\QQ\ra \oo\hookrightarrow \mathbb{C}$ of finite order, let $L(s,\rho)$ denote the associated Dirichlet $L$-series .
\begin{define}\label{def:KLpadic}
Attached to a non-trivial even Dirichlet character $\psi$ of $G_\QQ$ whose order is prime to $p$, there is an element $\al_\psi \in \LL$ such that for every $k\geq 1$ and every character $\rho$ of finite order of $\Gamma$, 
$$\rho_{\textup{cyc}}^k\rho (\al_\psi)=(1-\omega^{-k}\rho\psi(p)p^{k-1})L(1-k, \omega^{-k}\rho\psi).$$ 
See~\cite[Theorem 7.10]{w}. The element $\al_\psi$ is called the $p$-adic $L$-function attached to $\psi$.
\end{define}
\begin{rem}
\label{rem:KLsvariable}
Starting from $\al_\psi$ above, one may construct a function $L_p(s,\psi)$ (which is analytic at all $s\in \ZZ_p$) 
 by setting 
$$L_p(s,\psi)=\rho_
{\textup{cyc}}^{1-s}(\al_\psi).$$
\end{rem}
Recall that $L_n=L\QQ_n$ and $L_\infty=L\QQ_\infty$. For a prime $\frak{p}$, let $U_{n,\frak{p}}$ denote the local units inside $(L_n)_{_{\frak{p}}}$. Let $\mathcal{U}_{n}:=\prod_{\frak{p}|p}U_{n,\frak{p}}$ be the group of semi-local units and let $\mathcal{V}_n=\left(L_n\otimes\QQ_p\right)^\times=\prod_{\frak{p}|p}(L_n)_{\frak{p}}^\times$. By Kummer theory, we have an identification
\be
\label{eqn:kummerid}
H^1((L_n)_{_p},\oo(1)) \stackrel{\sim}{\lra} \widehat{\mathcal{V}_n} \,\,\,\,\hbox{    and     }\,\,\,\, H^1((\QQ_n)_{_p},T) \stackrel{\sim}{\lra} \mathcal{V}_n^{\chi}
 \ee
where we recall that $\widehat{A}$ denotes the $p$-adic completion of an abelian group $A$ and when $A$ is endowed with an action of $\textup{Gal}(L/\QQ)$, we write $A^{\chi}$ for the $\chi$-part of $\widehat{A}$. Define $\mathcal{U}_\infty=\varprojlim_{n} \mathcal{U}_n$ and $\mathcal{V}_\infty=\varprojlim_n \mathcal{V}_n$, where the inverse limits are taken with respect to the norm maps. The identifications (\ref{eqn:kummerid}) above  then gives in the limit
\be
\label{eqn:kummeridinfty}
H^1(\QQ_p,T\otimes\LL) \stackrel{\sim}{\lra} \mathcal{V}_\infty^\chi.
\ee

Coleman introduced in~\cite{coleman} a useful tool which as an input takes a norm coherent sequences in a tower of local fields and gives as an output a power series. More precisely, Coleman defines a $\LL$-module homomorphism
\be
\label{eqn:coleamanmap}
\frak{col}^\psi_\infty : \,\,\mathcal{U}_\infty^\psi \lra \oo[[\Gamma]] 
\ee
with the property that 
\be
\label{eqn:colemanmaponcylounits} 
\frak{col}^\psi_\infty(\xi_\infty^\psi) = \al_\psi,
\ee 	
where we recall that $\xi_\infty^\psi \in \mathcal{U}_\infty^\psi$ is the norm coherent sequence of cyclotomic units along the tower of fields $\{L_n\}_{n\geq0}$. Let $\gamma$ be a topological generator of $\Gamma$ as fixed above. If the character $\psi$ is unramified at $p$, then $\frak{col}^\psi_\infty$ extends uniquely to a homomorphism
\be\label{eqn:extendedcoleman}
\frak{col}^\psi_\infty : \,\,\mathcal{V}_\infty^\psi \lra \frac{1}{\gamma-1}\oo[[\Gamma]].
\ee
See~\cite[\S3]{sol-wild}, \cite[\S2]{greithermain} and  \cite[\S4]{Tsuji} for a detailed description of Coleman's map. 

We define using (\ref{eqn:extendedcoleman})
\be\label{eqn:extendedcolemannormalized}
\xymatrix@C=.2in{
\widetilde{\frak{col}}_\infty^\psi=\frac{\gamma-1}{\frac{1}{p}\log_p(\rho_{\textup{cyc}}(\gamma))}\times\frak{col}_\infty^\psi\,\,\,: \,\mathcal{V}_\infty^\psi\ar[rr]&&\LL,
}
\ee
so that 
\be\label{eqn:colemanoncyclo}
\widetilde{\frak{col}}_\infty^\psi(\xi_\infty^\psi)=\frac{\gamma-1}{\frac{1}{p}\log_p(\rho_{\textup{cyc}}(\gamma))} \times \al_\psi \,\,\,\,\,\,\,\hbox{    and   }\,\,\,\,\,\,\, \widetilde{\frak{col}}_\infty^\psi(z_\infty^\psi)=
p\al_\psi,
\ee
where $z_\infty^\psi\in \mathcal{V}_\infty^\psi$ is the collection of wild cyclotomic $p$-units. Note that $\frac{1}{p}\log_p(\rho_{\textup{cyc}}(\gamma)) \in \ZZ_p^\times$ since $\gamma \in \Gamma$ assumed to be a topological generator and since we assumed $p$ is odd.
\subsubsection{Elliptic units and Katz's $p$-adic $L$-function}
\label{subsubsec:Katz}
Let $\frak{O}$ be the completion of the  ring of integers of the maximal unramified extension of $\frak{F}$ and let $k$ be a quadratic imaginary number field such that $p$ splits in $k/\QQ$. Write $p=\wp\wp^*$ with $\wp\neq\wp^*$. We adapt the notation and hypotheses from~\S\ref{subsec:notation}, in particular, $k_\infty$ is the unique $\ZZ_p$-extension of $k$ which is unramified outside $\wp$ and $\Gamma=\textup{Gal}(k_\infty/k)$. Write $k(\frak{f}\wp^\infty)=\bigcup_{n\geq0} k(\frak{f}\wp^{n+1})$ and let  
$$\rho_{E}:\textup{Gal}\left(k(\frak{f}\wp^\infty)/k(\frak{f})\right)\lra \ZZ_p^\times$$
 be the character whose construction is sketched in \S\ref{subsec:notation};  and let $\rho_\Gamma$ be its restriction to $\Gamma$. We may similarly define $\rho_E^*$, $\Gamma^*$ and $\rho_{\Gamma^*}$ by replacing $\wp$ by $\wp^*$. 
 Set $\mathcal{G}=\textup{Gal}(k(\frak{f}p^\infty)/k(\frak{f}))$ and $\pmb{\LL}=\frak{O}[[\mathcal{G}]]$. We denote the Grossencharacter character attached to the elliptic curve $E$ also by $\rho_E$, which should cause no confusion since these two characters are related in a manner described in~\cite{Weil}. 
 
For a Grossencharacter $\psi$ of $k$ of type $A_0$ (in the sense of~\cite[\S II.1]{deshalit}) and an integral ideal $\frak{m}\subset k$, the Hecke $L$-series of $\psi$ (with modulus $\frak{m}$) is the complex valued function $L_{\infty,\frak{m}}(\psi,s)=\sum\psi(\frak{a})\mathbf{N}\frak{a}^{-s}$, where $\frak{a}$ runs over all integral ideals relatively prime to $\frak{m}$.  Let $d_k\in \ZZ^-$ be the discriminant of $K$. As before, let $\chi: G_k\ra \frak{O}^\times$ be a Dirichlet character whose order is prime to $p$ and let $\Omega$ be the positive real period of a global minimal model of $E$. For notational simplicity, write $\rho=\rho_E$ and $\rho^*=\rho_E^*$.

The following theorem describes the 2-variable $\wp$-adic $L$-function, first constructed by Katz \cite{katztwo} and Manin and Vishik.
\begin{thm}
\label{thm:katzpadicLfunc}
For $j,k \in \ZZ$, set  $\epsilon=\rho_E^k{\rho_E^*}^{j}\chi$. There is a $\wp$-adic period $\Omega_\wp \in \pmb{\LL}$ and an element $\al_\chi \in \pmb{\LL}$ such that for $0\leq-j<k$,
$$\Omega_\wp^{j-k}\al_\chi(\rho^k{{\rho}^*}^{j})=\Omega^{j-k}(k-1)!\left(\frac{\sqrt{-d_k}}{2\pi}\right)^j\cdot G(\epsilon)\left(1-\frac{\epsilon(\wp)}{p}\right)\cdot L_{\infty,\frak{\wp}}(\epsilon^{-1},0).$$  
\end{thm}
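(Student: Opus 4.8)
The plan is to recall Katz's construction, referring to \cite{katztwo} and \cite[\S II]{deshalit} for the details, and to realize $\al_\chi$ as the image, under evaluation at a CM point, of Katz's \emph{Eisenstein measure}, so that the interpolation formula becomes an instance of Damerell's theorem. First I would fix a Weierstrass model of $E$ over a suitable extension of $F$ together with its period lattice $\Omega\cdot\frak{a}$, where $\frak{a}$ is a fractional $\oo_k$-ideal and $\Omega$ is the complex period fixed above, and choose a generator of the invariant differential on the formal group $\widehat{E}$ at $\wp$; comparing the latter with Katz's canonical differential on the Igusa tower defines the $\wp$-adic period $\Omega_\wp$. This data determines a continuous $\frak{O}$-valued functional $\mathrm{ev}_{x_E}$ on Katz's ring of generalized $p$-adic modular forms, namely evaluation at the CM point $x_E$ attached to $(E,\frak{a})$.

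Next, the core input is Katz's construction of a measure on $\mathcal{G}$ (equivalently on $\ZZ_p^\times\times\ZZ_p^\times$, after using the CM type to exploit $p=\wp\wp^*$) whose moments are the $q$-expansion coefficients of the $\wp$-depleted nearly holomorphic Eisenstein series $\delta^j E_k$, where $\delta$ is the Maass--Shimura operator; the existence of this measure rests on Kummer-type congruences between these $q$-expansions together with the $q$-expansion principle for $p$-adic modular forms. Applying $\mathrm{ev}_{x_E}$ to this measure produces $\al_\chi\in\pmb{\LL}=\frak{O}[[\mathcal{G}]]$. To evaluate it at $\rho^k{\rho^*}^{j}$ with $0\leq-j<k$, one invokes Damerell's theorem in Katz's algebraic form: the value of $\delta^j E_k$ at $x_E$ equals, up to the explicit archimedean factor $(k-1)!\,(\sqrt{-d_k}/2\pi)^{j}\,G(\epsilon)$ and the period ratio $(\Omega/\Omega_\wp)^{k-j}$, the Hecke $L$-value $L_{\infty,\wp}(\epsilon^{-1},0)$ with $\epsilon=\rho_E^k{\rho_E^*}^{j}\chi$. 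The Euler factor $1-\epsilon(\wp)/p$ enters exactly as in the cyclotomic case of Definition~\ref{def:KLpadic} (cf.\ \cite[Theorem 7.10]{w}): passing from the full Eisenstein series to its $\wp$-stabilization, which is forced on us in order to land in the space of $p$-adic modular forms, removes the Euler factor at $\wp$. The constraint $0\leq-j<k$ is precisely the range in which $\delta^j E_k$ is a nearly holomorphic form of the correct weight and character, so that Damerell's theorem applies directly.

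The genuinely hard part, which is the content of Katz's theorem rather than a formal manipulation, is twofold. First, one must show that the candidate moments assemble into a \emph{bounded} $\frak{O}$-valued measure rather than merely a compatible family of functionals; this is the heart of the theory of $p$-adic modular forms and reduces, via the $q$-expansion principle, to $q$-expansion congruences for Eisenstein series. Second, the clean appearance of the single power $\Omega_\wp^{j-k}$ requires a careful comparison between the complex period $\Omega$ (coming from the Betti--de Rham comparison over $\mathbb{C}$) and the $\wp$-adic period $\Omega_\wp$ (coming from the crystalline--de Rham comparison at $\wp$, i.e.\ the period of $\widehat{E}$) on the CM abelian variety with complex multiplication by $\oo_k$, together with an analysis of how $\delta$ corresponds to Katz's $\theta$-operator on $q$-expansions under this comparison. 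Both points are carried out in detail in \cite{katztwo} and \cite[\S II]{deshalit}; for our purposes it suffices to quote the statement in the form above, and we recall that the dictionary between the Grossencharacter $\rho_E$ and the Galois character of the same name used here is the one described in \cite{Weil}.
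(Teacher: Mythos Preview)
Your sketch is a reasonable high-level outline of the standard proof of Katz's interpolation formula, but note that the paper does not give a proof of this theorem at all: immediately after the statement it simply writes ``See~\cite[Theorem II.4.14]{deshalit} for details (e.g., for a definition of $G(\epsilon)$) and for the proof.'' In other words, Theorem~\ref{thm:katzpadicLfunc} is being quoted as a known result of Katz (and Manin--Vishik), not proved.

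So there is nothing to compare your argument against. What you have written is essentially a pr\'ecis of the actual proof as it appears in \cite{katztwo} and \cite[\S II]{deshalit}: the Eisenstein measure, evaluation at the CM point, the comparison of complex and $\wp$-adic periods, Damerell's theorem for the algebraicity of the special values, and the appearance of the Euler factor at $\wp$ from stabilization. That is the correct shape of the argument, and your identification of the two genuinely hard inputs (boundedness of the measure via $q$-expansion congruences, and the period comparison) is accurate. If you want to match the paper, the appropriate thing to do is simply cite \cite[Theorem II.4.14]{deshalit} and move on; if you prefer to include a sketch, what you have is fine, though you should make clear that it is an outline referring to those sources rather than a self-contained proof.
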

See~\cite[Theorem II.4.14]{deshalit} for details (e.g., for a definition of $G(\epsilon)$) and for the proof.

In this paper, we are only interested in the restriction $\al_{\chi}\big{|}_{\Gamma}$ of the 2-variable $p$-adic $L$-function $\al_\chi$ to characters of $\Gamma$. Starting from the one-variable $p$-adic $L$-function $\al_{\chi}\big{|}_{\Gamma}$, we define $\frak{L}_{\wp}(s,\chi)=\al_{\chi}\Big{|}_{\Gamma}(\rho_\Gamma^{1-s})$.

For $k_n$ as in \S\ref{subsec:notation}, write $L_n=L k_n$. For a prime $\frak{q}$, let $U_{n,\frak{q}}$ be the local units inside $(L_n)_{\frak{q}}$, and let $\mathcal{U}_n=\prod_{\frak{q}|\wp} U_{n,\frak{q}}$ be the group of semi-local units. Set $\mathcal{U}_\infty=\varprojlim_n \mathcal{U}_n$. As in \S\ref{subsubsec:cycloprep}, we consider Coleman's map
$$\frak{col}_\infty^\chi: \mathcal{U}_\infty^\chi{\otimes}_{\oo}\frak{O} \lra \frak{O}[[\Gamma]],$$
see~\cite[\S I.3.5]{deshalit} for a definition of this map. The map $\frak{col}_\infty^\chi$ here is the map ``\,$i$\,'' of loc.cit. restricted to the $\chi$-parts and to the $\Gamma$-direction.

Let $\frak{w}_n\in L_n^\times$ be the elliptic unit denoted by $\xi_n$ by Bley~\cite[\S3]{bley-wild}. The collection $\frak{w}_{\infty}^\chi:=\{\frak{w}_n^\chi\}\in \mathcal{U}_\infty^\chi$ is called the collection of \emph{wild elliptic units along $\Gamma$}. As wild cyclotomic units recovers the Kubota-Leopoldt $p$-adic $L$-function, wild elliptic units along $\Gamma$ may be used to obtain the one-variable $p$-adic $L$-function:
\be
\label{eqn:ellipticcolemanimage}
\frak{col}_\infty^\chi(\frak{w}_{\infty}^\chi)=\mathcal{L}_\chi\big{|}_{\Gamma}. 
\ee
This fact has been first proved by Coates and Wiles~\cite{coateswilesone}. For the 2-variable version of~(\ref{eqn:ellipticcolemanimage}), see~\cite{yagertwo} and \cite[\S IV]{deshalit}.

\subsection{Height computations for the base field $\QQ$: The case $\chi$ is even}
\label{subsec:htQeven}
Let $\chi$ be an even Dirichlet character as before. Recall that $\Phi_n=(\QQ_n)_p$, and recall also the fixed place $v_0$ of $L$ which is induced from the embedding $\iota_p:\overline{\QQ}\hookrightarrow \overline{\QQ}_p$. Write $v_0$ for the unique place of $L_n$ which lies above $v_0$ and define $\frak{L}_n=(L_n)_{v_0}$. In this section, we construct a  particular collection
$$\Phi =\{\phi^{(n)}\}_n\in H^1(\QQ_p,T^*\otimes\LL)= \varprojlim_n H^1(\Phi_n,T^*)$$
 starting from $\widetilde{\frak{col}}_\infty^\chi$, which we use together with Corollary~\ref{main} to prove a formula for the leading term of an imprimitive Kubota-Leopoldt $p$-adic $L$-function. 
 
 As in~(\ref{eqn:defphialpha}), we have identifications 
 \begin{align*}
 H^1(\Phi_n,\oo(\chi))=\left(\bigoplus_{v|p}H^1((L_{n})_{v},\oo)\right)^{\chi^{-1}}\stackrel{\xi_{\chi^{-1}}}{\lra} H^1(\frak{L}_n,\oo)&= \textup{Hom}(G_{\frak{L}_n},\oo)\\
 &=\textup{Hom} (\widehat{\frak{L}_n^{\times}},\oo).
 \end{align*}
 Here the direct sum is over the places of $L$ which lie above $p$ with the convention that the unique place of $L_n$ above a place $v|p$ of $L$ is also denoted by $v$. Also, $\xi_{\chi^{-1}}$ is the projection to the $v_0$-coordinate and the final equality is obtained by local class field theory. Furthermore, as in~(\ref{eqn:compositum1}), we have identifications
 $$ 
\xymatrix@R=.1in{
H^1(\Phi_n,\oo(1)\otimes\chi^{-1})=\left(\bigoplus_{v|p}H^1((L_n)_v,\oo(1))\right)^\chi\ar[r]^(.64){\sim}_(.64){\xi_{\chi}}&H^1(\frak{L}_n,\oo(1))= \widehat{\frak{L}_n^{\times}}\otimes_{\ZZ_p}\oo},
$$
 which, put together with the identification above gives isomorphisms
 \be\label{eqn:isompreparation}
 \textup{Hom}\left(H^1(\Phi_n,T),\oo\right)\stackrel{\sim}{\lra} \textup{Hom} (\widehat{\frak{L}_n^{\times}},\oo) \stackrel{\sim}{\lra} H^1(\Phi_n,T^*).
 \ee
 Note that both  isomorphisms in (\ref{eqn:isompreparation}) depend on the choice of $v_0$, yet the compositum of them does not.
 
 Let $\mathfrak{U}H^1(\Phi_n,T) \subset H^1(\Phi_n,T)$ denote submodule of \emph{universal norms} inside of $H^1(\Phi_n,T)$, i.e., the image of the canonical $\LL$-module homomorphism
 $$H^1(\QQ_p,T\otimes\LL)=\varprojlim_mH^1(\Phi_m,T) \lra H^1(\Phi_n,T).$$
 The Coleman map $\widetilde{\frak{col}}_\infty^\chi: \varprojlim_mH^1(\Phi_m,T)=\varprojlim_m \mathcal{V}_m^\chi \lra \LL$ induces (since it is $\LL$-linear) a $\oo[\Gamma_n]$-module homomorphism 
 $$\widetilde{\frak{col}}_n^\chi: \mathfrak{U}H^1(\Phi_n,T) \lra \oo[\Gamma_n].$$ 
 For a finitely generated $\oo[\Gamma_n]$-module $M$, there is a canonical isomorphism
 $$\xymatrix@R=.25cm{
\frak{b}: \textup{Hom}_{\oo}(M,\oo) \ar[r]^(.47){\sim}& \textup{Hom}_{\oo[\Gamma_n]}(M,\oo[\Gamma_n])\\ 
 f \ar@{|->}[r]& \left(m \mapsto\sum_{g \in \Gamma_n} f(g^{-1}m)\cdot g\right)
 }$$
 (c.f., \cite[Proposition VI.3.4]{brown}). Using the isomorphism  $\frak{b}$ applied with $M=\mathfrak{U}H^1(\Phi_n,T)$, we define $\phi^{(n)}$ by requiring $\frak{b}(\phi^{(n)})=\widetilde{\frak{col}}_n^\chi$. 
  \begin{lemma}
 \label{lem:extendphis}
 The $\oo$-module $H^1(\Phi_n,T)/\frak{U}H^1(\Phi_n,T)\cong \textup{coker}\left(H^1(\QQ_p,T\otimes\LL)\ra H^1(\Phi_n,T)\right)$ is free of rank one.
 \end{lemma}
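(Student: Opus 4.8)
The plan is to identify the cokernel of the norm map with a concrete, well-understood object and then compute its $\oo$-rank directly. First I would invoke the identification $H^1(\QQ_p,T\otimes\LL)\stackrel{\sim}{\lra}\varprojlim_mH^1(\Phi_m,T)$ from \cite[Proposition II.1.1]{colmez-reciprocity}, so that the map in question becomes the projection $\varprojlim_mH^1(\Phi_m,T)\ra H^1(\Phi_n,T)$ to the $n$-th layer. Its cokernel is a quotient of $H^1(\QQ_p,T\otimes\LL)/(\gamma^{p^n}-1)$, and the snake lemma (or the five-term exact sequence coming from $0\ra T\otimes\LL\stackrel{\gamma^{p^n}-1}{\lra}T\otimes\LL\ra T\otimes\oo[\Gamma/\Gamma^{p^n}]\ra 0$) shows that this cokernel injects into $H^2(\QQ_p,T\otimes\LL)[\gamma^{p^n}-1]$. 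As in the proof of Lemma~\ref{lifting}, the cohomological dimension of $\textup{Gal}(\overline{\QQ}_p/\QQ_p)$ being $2$ gives $H^2(\QQ_p,T\otimes\LL)/(\gamma^{p^n}-1)\cong H^2(\QQ_p,T)=0$ by local duality (since $H^0(\QQ_p,T^*)=H^0(\QQ_p,\oo(\chi))$ is what shows up, and here it is $T^*$ paired against $T$ — one checks $H^2(\QQ_p,T)$ vanishes because $H^0(\QQ_p,T^*)$ is finite... actually $H^0(\QQ_p,\oo(\chi))$ is free of rank one, so one must be a little careful; $H^2(\QQ_p,T)\cong H^0(\QQ_p,T^*)^\vee$ by local Tate duality, which is nonzero). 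Let me instead proceed via a direct rank count.

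Concretely, I would use the Kummer-theoretic description: under \eqref{eqn:kummerid}, $H^1(\Phi_n,T)\cong\mathcal{V}_n^\chi$ where $\mathcal{V}_n=\prod_{\frak{p}|p}(L_n)_{\frak{p}}^\times$, and $\frak{U}H^1(\Phi_n,T)$ corresponds to the module of universal norms $\varprojlim_m\mathcal{V}_m^\chi\ra\mathcal{V}_n^\chi$. By standard local Iwasawa theory for the $\ZZ_p$-extension $\Phi_\infty/\Phi_0$ (Coleman theory, or \cite[\S3]{sol-wild}, or the explicit description via $\frak{col}_\infty^\chi$), the module $\varprojlim_m\mathcal{V}_m^\chi$ is a $\LL$-module of rank one, and the quotient $\mathcal{V}_n^\chi/\frak{U}H^1(\Phi_n,T)$ is controlled by the valuation (or ``$p$-part'') coordinate: the norm map is surjective on local units at each finite layer once $\chi(\wp)=1$ ensures the relevant Frobenius-invariants match up, so the only obstruction to being a universal norm is detected by the $v_0$-adic valuation on $\frak{L}_n^\times$. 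More precisely, I would argue that $\frak{U}H^1(\Phi_n,T)$ is exactly the kernel of $\mathcal{V}_n^\chi\ra\oo$ given by the composite of $\xi_\chi$ (projection to the $v_0$-coordinate, as in \eqref{eqn:compositum1}) with the normalized valuation $\widehat{\frak{L}_n^\times}\ra\ZZ_p$; since this composite is surjective onto a rank-one free $\oo$-module, the cokernel is free of rank one.

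The cleanest route is probably to deduce the statement from the structure of $\varprojlim_m\mathcal{V}_m^\chi$ as a $\LL$-module together with the fact that $H^1(\QQ_p,T\otimes\LL)$ has no $(\gamma^{p^n}-1)$-torsion (used already in Remark~\ref{rem:compareprime} for $n=0$, via \cite[Theorem 2.4]{matsumura} applied to the finitely-generated $\oo$-module $H^2$). So I would: (1) show $H^1(\QQ_p,T\otimes\LL)$ is $\LL$-torsion-free, hence $(\gamma^{p^n}-1)$-torsion-free; (2) from the long exact sequence get $0\ra H^1(\QQ_p,T\otimes\LL)/(\gamma^{p^n}-1)\ra H^1(\Phi_n,T)\ra H^2(\QQ_p,T\otimes\LL)[\gamma^{p^n}-1]\ra 0$, and note the image of $H^1(\QQ_p,T\otimes\LL)\ra H^1(\Phi_n,T)$ is exactly $\frak{U}H^1(\Phi_n,T)$, so $H^1(\Phi_n,T)/\frak{U}H^1(\Phi_n,T)\cong H^2(\QQ_p,T\otimes\LL)[\gamma^{p^n}-1]$; (3) compute $H^2(\QQ_p,T\otimes\LL)$ — by local Euler characteristic and Tate duality, $H^2(\QQ_p,T\otimes\LL)\cong H^0(\QQ_p,T^*\otimes\LL^\iota)^\vee\cong(\oo(\chi)\otimes\LL^\iota)^{G_{\QQ_p}}{}^\vee$, which is free of rank one over $\oo$ since $\chi(\wp)=1$ (the invariants are $\oo$, on which $\gamma$ acts through $\rho_{\textup{cyc}}$, via the twist); (4) conclude that $H^2(\QQ_p,T\otimes\LL)[\gamma^{p^n}-1]$ is free of rank one over $\oo$ because $\gamma^{p^n}-1$ acts as $\rho_{\textup{cyc}}(\gamma)^{p^n}-1\neq 0$ on... no — one needs this torsion to be nonzero and free, which follows since $H^2(\QQ_p,T\otimes\LL)$ as an $\oo[[\Gamma]]$-module is (up to twist) $\oo$ with $\gamma$ acting by a $1$-unit, so $\gamma^{p^n}-1$ kills... hmm, it does not kill it; rather the torsion is all of it and it is $\oo$-free of rank one. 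The main obstacle is getting step (3)/(4) exactly right — pinning down $H^2(\QQ_p,T\otimes\LL)$ as an $\oo[[\Gamma]]$-module and verifying its $(\gamma^{p^n}-1)$-torsion is $\oo$-free of rank one — and I expect the hypothesis $\chi(\wp)=1$ (equivalently $\mathbf{(H)}$, making $\oo(\chi)$ trivial as a $G_{\QQ_p}$-module) to be exactly what makes this work, paralleling the role it plays throughout the paper.
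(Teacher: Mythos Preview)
Your final outlined approach (steps (1)--(4)) is exactly the paper's: identify the cokernel with $H^2(\QQ_p,T\otimes\LL)[\gamma^{p^n}-1]$ via the long exact sequence attached to multiplication by $\gamma^{p^n}-1$, then compute $H^2(\QQ_p,T\otimes\LL)$. Step (1) is correct but unnecessary --- the identification of the cokernel with the $H^2$-torsion does not require torsion-freeness of $H^1$.

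The gap is in your step (3)/(4). Your assertion that ``$\gamma$ acts through $\rho_{\textup{cyc}}$'' on $H^2(\QQ_p,T\otimes\LL)\cong\oo$ is wrong, and you visibly sense the contradiction: if $\gamma$ acted by $\rho_{\textup{cyc}}(\gamma)\in 1+p\ZZ_p$, then $\gamma^{p^n}-1$ would act by a nonzero element of $\oo$, and the $(\gamma^{p^n}-1)$-torsion of the torsion-free $\oo$-module $\oo$ would be \emph{zero}, not all of $\oo$. The correct statement is that $\Gamma$ acts \emph{trivially} on $H^2(\QQ_p,T\otimes\LL)$; then $\gamma^{p^n}-1$ acts as zero and the torsion is the whole module $\cong\oo$. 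Your duality $H^2(\QQ_p,T\otimes\LL)\cong\big((\oo(\chi)\otimes\LL^\iota)^{G_{\QQ_p}}\big)^\vee$ is also not right as stated: since the decomposition group at $p$ surjects onto $\Gamma$, the $G_{\QQ_p}$-invariants of $\LL^\iota$ are zero (the only $x\in\LL$ with $(\gamma-1)x=0$ is $x=0$), so that formula gives the wrong answer.

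The paper avoids this by computing $H^2(\QQ_p,T\otimes\LL)=\varprojlim_m H^2(\Phi_m,T)$ via \cite[Proposition II.1.1]{colmez-reciprocity}, and then applying ordinary local Tate duality at each finite layer: $H^2(\Phi_m,T)\cong\textup{Hom}\big(H^0(\Phi_m,\frak{F}/\oo(\chi)),\frak{F}/\oo\big)$. Since $\chi(p)=1$, each $H^0(\Phi_m,\frak{F}/\oo(\chi))=\frak{F}/\oo$ and the transition maps are the identity, so the inverse limit is $\oo$; the $\Gamma$-action on each layer is visibly trivial. That cleanly gives $H^2(\QQ_p,T\otimes\LL)[\gamma^{p^n}-1]\cong\oo$, free of rank one. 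Replace your Iwasawa-level duality attempt with this layerwise computation and the proof goes through.
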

 \begin{proof}
 By the long exact sequence of $G_{\QQ_p}$-cohomology we have
 $$ \textup{coker}\left(H^1(\QQ_p,T\otimes\LL)\ra H^1(\Phi_n,T)\right)=H^2(\QQ_p,T\otimes\LL)[\gamma^{p^n}-1].$$
 By~\cite[Proposition II.1.1]{colmez-reciprocity} and by local duality, we have 
 \begin{align*}
 H^2(\QQ_p,T\otimes\LL)=\varprojlim_n H^2(\Phi_n,T)&=\varprojlim_n\textup{Hom} \left(H^0(\Phi_n,\frak{F}/\oo(\chi)),\frak{F}/\oo)\right)\\
 &=\textup{Hom} \left(\varinjlim_n H^0(\Phi_n,\frak{F}/\oo(\chi)),\frak{F}/\oo)\right) \cong \oo,
 \end{align*}
 which is free of rank one as an $\oo$-module.
 \end{proof}
 
 \begin{rem}
 \label{rem:chooseorthog}
 In this remark, we give a further study of the the universal norms $\frak{U}H^1(\Phi_n,T)$ inside $H^1(\Phi_n,T)$. For notational simplicity, we assume $\oo=\ZZ_p$; the general case may  be treated tensoring all our conclusions in this remark by $\oo$. Furthermore, since we assume $\chi(p)=1$ (i.e., $\chi\large{\mid}_{G_{\QQ_p}}=\mathbf{1}$), it suffices to study the universal norms $\frak{U}H^1(\Phi_n,\ZZ_p(1))$ inside $H^1(\Phi_n,\ZZ_p(1))$.
 
 \begin{itemize}
 \item[(i)] Let $\varpi_n \in \Phi_n^\times$ be a uniformizer which is chosen in a way that $\mathbf{N}_{\Phi_{n}/\Phi_m} (\varpi_n)=\varpi_m$ for every $n\geq m$. Let $\frak{U_n}$ the units of $\Phi_n$. Kummer theory gives an identification 
 $$H^1(\Phi_n,\ZZ_p(1))=\widehat{\Phi_n^\times}=\varpi_n^{\ZZ_p}\times\widehat{\frak{U}_n}.$$
 Since $p \in H^1(\QQ_p,\ZZ_p(1))=p^{\ZZ_p}\times \widehat{\ZZ_p^\times}$ is a universal norm, it follows from Lemma~\ref{lem:extendphis} that no local unit (i.e., an element of $\widehat{\ZZ_p^\times} \subset \widehat{\QQ_p^\times}$) besides $1$ is a universal norm, and we have $\frak{U}H^1(\QQ_p,\ZZ_p(1))=p^{\ZZ_p}$ under the identification above. Set $Y_0=\widehat{\ZZ_p^\times}$, so that we have a decomposition $H^1(\QQ_p,\ZZ_p(1))=\frak{U}H^1(\Phi_0,\ZZ_p(1))\times Y_0$ into rank-one $\ZZ_p$-modules. Note that we adopt here the multiplicative notation for these abelian groups.
 
 \item[(ii)] For every $n\geq m$, the restriction map
 $$\textup{res}_{\Phi_m/\Phi_n}: H^1(\Phi_m,\ZZ_p(1)) \lra H^1(\Phi_n,\ZZ_p(1))^{\textup{Gal}(\Phi_n/\Phi_m)}\hookrightarrow H^1(\Phi_n,\ZZ_p(1))$$ 
 is  simply the natural injection $\widehat{\Phi_m^\times} \hookrightarrow \widehat{\Phi_n^\times}$. When $m=0$, write $\textup{res}_n$ for $\textup{res}_{\Phi_n/\QQ_p}$.
 \item[(iii)] If $1\neq u\in\widehat{\ZZ_p^\times} \subset H^1(\QQ_p,\ZZ_p(1))$, then $\textup{res}_n(u)$ is not a universal norm. Indeed, if otherwise, $\mathbf{N}_{\Phi_{n}/\QQ_p}(\textup{res}_n(u))=u^{p^n} \in\widehat{\ZZ_p^{\times}}$ would then be a universal norm and hence $u^{p^n}=1$ by (i). Since $\widehat{\ZZ_p^\times}$ is torsion-free, it follows that $u=1$. Let 
 $$Y_n=\textup{im}\left(Y_0 \stackrel{\textup{res}_n}{\lra}H^1(\Phi_n,\ZZ_p(1))\right).$$
 \item[(iv)] The quotient $H^1(\Phi_n,\ZZ_p(1))/Y_n=\widehat{\Phi_n^\times}{\big /} \textup{im}(\widehat{\ZZ_p^\times}\hookrightarrow \widehat{\Phi_n^\times})$ is torsion-free. Indeed, if an element of the quotient $\widehat{\Phi_n^\times}{\big /} \textup{im}(\widehat{\ZZ_p^\times}\hookrightarrow \widehat{\Phi_n^\times})$ represented by $x \in \widehat{\Phi_n^\times} - \widehat{\ZZ_p^\times}$ is $p$-torsion, so that $x^p \in \widehat{\ZZ_p^\times}$, then we would have $\mu_p \subset{\Phi_n^\times}$, which is not true. Hence, $Y_n$ is a free rank-one direct summand of $H^1(\Phi_n,\ZZ_p(1))$.
 \item[(v)] By Lemma~\ref{lem:extendphis}, we have
 \be\label{eqn:univnormrank}
 \textup{rank}_{\ZZ_p} \,\frak{U}H^1(\Phi_n,\ZZ_p(1))=\textup{rank}_{\ZZ_p} H^1(\Phi_n,\ZZ_p(1))-1.
 \ee
 \end{itemize}
 Using (iii), (iv) and (\ref{eqn:univnormrank}), we conclude that $H^1(\Phi_n,\ZZ_p(1))=\frak{U}H^1(\Phi_n,\ZZ_p(1))\times Y_n$ as $\ZZ_p$-modules.
 \end{rem}
   Remark~\ref{rem:chooseorthog}(v)  ensures that one may extend $\phi^{(n)}: \mathfrak{U}H^1(\Phi_n,T) \ra \oo$ to a homomorphism $H^1(\Phi_n,T)\ra \oo$, by declaring $\phi^{(n)}(c)=0$ for $c \in Y_n$. Note in particular for $n=0$ that the map $\phi^{(0)}=\widetilde{\frak{col}}_0^\chi:H^1(\QQ_p,T)\stackrel{\sim}{\ra}\widehat{L_{v_0}^\times}\otimes_{\ZZ_p}\oo \ra \oo$ (which is extended from $\frak{U}H^1(\QQ_p,T)$ as described above)  is unramified since it is identically \emph{zero} on the units $\widehat{\oo_{L_{v_0}}^\times}\otimes_{\ZZ_p}\oo$ by construction (as explained in Remark~\ref{rem:chooseorthog}(i)).

Let $\varpi_{v_0} \in L_{v_0}^\times$ be a uniformizer and set $\alpha(v_0)=\widetilde{\frak{col}}_0^\chi(\varpi_{v_0}) \in \oo$. Note that the value $\widetilde{\frak{col}}_0^\chi(\varpi_{v_0})$ is well defined thanks to the discussion in the preceding paragraph. Let $\frak{col}_0^\chi\in \widetilde{H}^1_f(\QQ,T^*)$ be the element which maps to $\alpha(v_0)$ under the  compositum of the isomorphisms (\ref{eqn:tilde identification}). Furthermore, one may verify without difficulty that the collection $\Phi=\{\phi^{(n)}\}$ chosen as in this section is norm-coherent and  the Rubin-style formula we proved (Corollary~\ref{main}) applies with the particular $\Phi$ we have constructed. Before stating the theorem we prove using these facts, we first define what we call the "imprimitive $p$-adic $L$-function". 
\begin{define}
\label{def:imprimitivepadicL}
For $\al_\chi \in \LL$ as above and for any topological generator $\gamma\in \Gamma$, write $\widetilde{\al_\chi}:=\frac{\gamma-1}{\frac{1}{p}\log_p\rho_{\textup{cyc}}(\gamma)}\times\al_\chi \in \LL,$ and define the \emph{imprimitive $p$-adic $L$-function} to be 
$$\widetilde{L}_p(s,\chi)=\rho_{\textup{cyc}}^{1-s}(\widetilde{\al_\chi}).$$
Note that,
\begin{itemize}
\item $\widetilde{L}_p(s,\chi)$ is an Iwasawa function,
\item $\frac{d}{ds}\widetilde{L}_p(s,\chi)\Big{|}_{s=1}$ does not depend on the choice of $\gamma$.
\end{itemize}
\end{define}
  \begin{thm}
\label{main:K-L}
Suppose $\chi(p)=1$ and let $\widetilde{L}_p(s,\chi)$ be the imprimitive $p$-adic $L$-function defined as above. Then 
$$\widetilde{L}_p^{\prime}(1,\chi)=\langle c_1^\chi,\frak{col}_0^\chi\rangle_{\textup{Nek}}.$$
\end{thm}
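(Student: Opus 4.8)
The plan is to feed the concrete data of \S\ref{subsec:htQeven} into the Rubin-style formula of Corollary~\ref{main} and then to recognise the output as the leading term of the imprimitive Kubota--Leopoldt $p$-adic $L$-function. First I would check that Corollary~\ref{main} applies with $\alpha=\frak{col}_0^\chi$ and with $\Phi=\{\phi^{(n)}\}_n$ the collection constructed in \S\ref{subsec:htQeven}. By construction $\frak{col}_0^\chi\in\widetilde H^1_f(\QQ,T^*)$ corresponds under (\ref{eqn:tilde identification}) to $\alpha(v_0)=\widetilde{\frak{col}}_0^\chi(\varpi_{v_0})$, so the unramified homomorphism it determines (sending $\textup{Fr}_{v_0}\mapsto\alpha(v_0)$) is exactly $\phi^{(0)}$, viewed inside $\textup{Hom}(G_{v_0},\oo)$ via local reciprocity --- here one uses that $\phi^{(0)}$ kills local units, since it factors through the module of universal norms, which contains none. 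As $\Phi$ is norm-coherent and $\phi_\alpha^{(0)}=\phi_{\frak{col}_0^\chi}$, Corollary~\ref{main} gives
$$\langle c_0^\chi,\frak{col}_0^\chi\rangle_{\textup{Nek}}=L_{\xi,\Phi}^{\prime}(\pmb 1),$$
and the problem is reduced to showing $L_{\xi,\Phi}^{\prime}(\pmb 1)=\widetilde L_p^{\prime}(1,\chi)$.

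The central point is that the power series $\frak{P}_{\xi,\Phi}\in\LL$ associated with $\mu_{\xi,\Phi}$ coincides with the Coleman image $\widetilde{\frak{col}}_\infty^\chi(\xi_\infty^\chi)$. I would prove this by unwinding the definitions layer by layer. Fix $n$ and write $\xi^{(n)}\in\frak{U}H^1(\Phi_n,T)$ for the $n$-th component of $\textup{loc}_p(\xi)$ (a genuine universal norm, since $\xi$ comes from $H^1(\QQ,T\otimes\LL)$, so $\widetilde{\frak{col}}_n^\chi$ is defined on it). By Galois-equivariance of the local Tate pairing, $\mathcal{L}_\xi(\tau\phi^{(n)})$ equals the value at $\tau^{-1}\xi^{(n)}$ of $\phi^{(n)}$ regarded as an element of $\textup{Hom}_\oo(\frak{U}H^1(\Phi_n,T),\oo)$; assembling these values over $\tau\in\Gamma/\Gamma^{p^n}$ against the corresponding group elements is, by the explicit formula for the isomorphism $\frak{b}$ of \S\ref{subsec:htQeven}, nothing but $\frak{b}(\phi^{(n)})$ evaluated at $\xi^{(n)}$, which by the defining relation $\frak{b}(\phi^{(n)})=\widetilde{\frak{col}}_n^\chi$ equals $\widetilde{\frak{col}}_n^\chi(\xi^{(n)})$. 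Hence $P_n(\mu_{\xi,\Phi})=\widetilde{\frak{col}}_n^\chi(\xi^{(n)})$, and passing to the inverse limit over $n$ (legitimate because $\widetilde{\frak{col}}_\infty^\chi$ is $\LL$-linear, which is also what makes $\Phi$ norm-coherent) and identifying $\textup{loc}_p(\xi)$ with the semilocal cyclotomic units via (\ref{eqn:kummeridinfty}) yields $\frak{P}_{\xi,\Phi}=\widetilde{\frak{col}}_\infty^\chi(\xi_\infty^\chi)$.

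Now I would invoke the classical computation of the Coleman image of cyclotomic units, (\ref{eqn:colemanmaponcylounits}), together with the normalisation (\ref{eqn:extendedcolemannormalized})--(\ref{eqn:colemanoncyclo}):
$$\frak{P}_{\xi,\Phi}=\widetilde{\frak{col}}_\infty^\chi(\xi_\infty^\chi)=\frac{\gamma-1}{\log_p(\rho_{\textup{cyc}}(\gamma))}\times\al_\chi,$$
where $\al_\chi$ is the Kubota--Leopoldt $p$-adic $L$-function, $\rho_{\textup{cyc}}^{1-s}(\al_\chi)=L_p(s,\chi)$ by Remark~\ref{rem:KLsvariable}. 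Finally, by Remark~\ref{rem:observationabuse} one has $L_{\xi,\Phi}(\rho_{\textup{cyc}}^s)=\rho_{\textup{cyc}}^s(\frak{P}_{\xi,\Phi})$; since the scalar factor $s\mapsto(\rho_{\textup{cyc}}(\gamma)^s-1)/\log_p(\rho_{\textup{cyc}}(\gamma))$ vanishes at the trivial character with first derivative $1$ there, the product rule shows that $L_{\xi,\Phi}^{\prime}(\pmb 1)=\frac{d}{ds}\rho_{\textup{cyc}}^s(\frak{P}_{\xi,\Phi})|_{s=0}$ matches the leading term at the trivial character of the imprimitive $p$-adic $L$-function $\widetilde L_p(s,\chi)=sL_p(s,\chi)$, i.e.\ $\widetilde L_p^{\prime}(1,\chi)$; the normalising factor $\frac{\gamma-1}{\log_p(\rho_{\textup{cyc}}(\gamma))}$ is exactly what accounts for the extra ``$s$'', equivalently for the exceptional zero forced by $\xi_L=1$ (Lemma~\ref{lem:vanishing}). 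This last step is formally the computation of \cite[Proposition 7.1]{ru94}, already rehearsed in Remark~\ref{rem:observationabuse} and in the proof of Proposition~\ref{prop:main}. Concatenating with the displayed consequence of Corollary~\ref{main} completes the argument.

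I expect the identification $\frak{P}_{\xi,\Phi}=\widetilde{\frak{col}}_\infty^\chi(\xi_\infty^\chi)$ to be the main technical hurdle: it requires reconciling the normalisation of the local cup-product pairing used to build $\mathcal{L}_\xi$ with the (transposed) normalisation of Coleman's homomorphism hidden inside $\frak{b}$, and checking that this reconciliation is compatible up the cyclotomic tower --- which is precisely the content of the assertion in \S\ref{subsec:htQeven} that $\Phi$ is norm-coherent. Everything afterwards is the elementary $\frac{\gamma-1}{\log_p(\rho_{\textup{cyc}}(\gamma))}$-bookkeeping.
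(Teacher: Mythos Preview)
Your proposal is correct and follows essentially the same route as the paper: apply Corollary~\ref{main} with $\alpha=\frak{col}_0^\chi$ and the collection $\Phi$ of \S\ref{subsec:htQeven}, identify $\frak{P}_{\xi,\Phi}=\widetilde{\frak{col}}_\infty^\chi(\xi_\infty^\chi)=\frac{\gamma-1}{\log_p(\rho_{\textup{cyc}}(\gamma))}\al_\chi$ via (\ref{eqn:colemanoncyclo}), and then differentiate using Remark~\ref{rem:observationabuse}. You in fact supply more justification than the paper does---the paper writes the equality $\frak{P}_{\xi,\Phi}=\widetilde{\frak{col}}_\infty^\chi(\xi_\infty^\chi)$ without further comment, whereas you trace it back through the isomorphism $\frak{b}$ and the definition of $\mu_{\xi,\Phi}$.
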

\begin{proof}
As in \S\ref{sec:main}, let $\mu_{\xi,\Phi}$ be the measure on $\Gamma$ attached to $\xi=\xi_\infty^\chi$ and $\Phi$ we chose as above, let $\frak{P}_{\xi,\Phi}\in \LL$ be the associated power series and let $L_{\xi,\Phi}(\eta)$ denote the `$p$-adic $L$-function' on the characters $\eta:\Gamma\ra\ZZ_p^\times$. We then have 
\begin{align*}
\frak{P}_{\xi,\Phi}=\widetilde{\frak{col}}_\infty^\chi(\xi_\infty^\chi)&=\frac{\gamma-1}{\frac{1}{p}\log_p\rho_{\textup{cyc}}(\gamma)}\times{\frak{col}}_{\infty}^\chi(\xi_\infty^\chi)\\
&=\frac{\gamma-1}{\frac{1}{p}\log_p\rho_{\textup{cyc}}(\gamma)}\times\al_\chi.
\end{align*}
We therefore see that
\be\label{eqn:measurecalc}
\frac{d}{ds}\rho_{\textup{cyc}}^{s}(\frak{P}_{\xi,\Phi})\Big{|}_{s=0}=p\cdot\pmb{1}(\al_{\chi})=p\cdot L_p(1,\chi)=\frac{d}{ds}\widetilde{L}_p(s,\chi)\Big{|}_{s=1},
\ee
where we have the first equality because $\frac{d}{ds}\rho_{\textup{cyc}}^s=\log_p\rho_{\textup{cyc}}\cdot \rho_{\textup{cyc}}^s$, the second thanks to our definition of $L_p(s,\chi)$ (see Remark~\ref{rem:KLsvariable}). 

On the other hand, we have $\frac{d}{ds}\rho_{\textup{cyc}}^{s}(\frak{P}_{\xi,\Phi})\Big{|}_{s=0}=L^{\prime}_{\xi,\Phi}(\pmb{1})$ by Remark~\ref{rem:observationabuse}, and the Theorem follows combining (\ref{eqn:measurecalc}) and Corollary~\ref{main}. 

\end{proof}
\begin{rem}
When $\chi$ is an even character with $\chi(p)=1$, the \emph{exceptionality} that Nekov\'a\v{r}'s extended Selmer groups detect are not due to an honest exceptional zero of the associated Kubota-Leopoldt $p$-adic $L$-function, but rather due to the fact that the extended Selmer groups correspond to an imprimitive $p$-adic $L$-function.
\end{rem}
\subsection{Height computations for the base field $\QQ$: The case $\chi$ is odd}
\label{subsec:htQodd}
We suppose now that $\chi:G_\QQ \ra \oo^\times$ is an odd Dirichlet character whose order is prime to $p$ and which has the property that $\chi(p)=1$. Keeping the notation of~\S\ref{subsec:tilde} and \S\ref{subsec:classical}, we have the following identifications as in Proposition~\ref{prop:classical explicit} and Corollary~\ref{cor:tilde explicit}:
\be\label{eqn:Texplicit}
 \widetilde{H}^1_{f}(\QQ,T)=H^1_{\FFc}(\QQ,T)=\left(\mathcal{O}_L\left[{1}/{p}\right]^\times\right)^{\chi}, 
\ee
\be\label{eqn:T*explicit}
H^0(\QQ_p,\oo(\chi)) \stackrel{\sim}{\lra}\widetilde{H}^1_{f}(\QQ,T^*).
\ee
In particular, $\widetilde{H}^1_{f}(\QQ,T^*)$ is a free $\oo$-module of rank one. Also, since $\chi$ is odd and $\chi(p)=1$, the $\oo$-module $\widetilde{H}^1_{f}(\QQ,T)$ is also free of rank one.

The assumption that $\chi(p)=1$ implies that the prime $p$ splits completely in $L/\QQ$. Let $\wp\subset L$ be any prime above $p$ and let $\iota_\wp : L \hookrightarrow L_\wp=\QQ_p$ be the induced embedding. Let $h$ denote the class number of $L$, and let $x \in \oo_L[{1}/{p}]^{\times}$ be such that $\oo_L\cdot x=\wp^h.$ Define 
\be\label{define:z}
z=e_\chi\cdot x \in (\oo_L[1/p]^\times)^\chi=\widetilde{H}^1_f(\QQ,T)\,\,\, \hbox{ and }\,\,\, z_0=\frac{1}{h}\cdot z \in \widetilde{H}^1_f(\QQ,T) \otimes\QQ_p.
\ee
It is not hard to see that the \emph{$\mathcal{L}$-invariant} (c.f., \cite[\S1]{gr})
$$\mathcal{L}:=\frac{\log_p(\iota_\wp(z))}{\textup{ord}_\wp(z)}= \log_p(\iota_\wp(z_0)) \in \frak{F}=\textup{Frac}(\oo)$$
is independent of the choice of the place $\wp$ and the choice of $x$.

Let $f=f_L$ be the conductor of the abelian field $L$. We regard the character $\chi$ as a character of the group $\Delta_f:=\textup{Gal}(\QQ(\mu_f)/\QQ)$ via 
$$\chi: \Delta_f \twoheadrightarrow \textup{Gal}(L/\QQ)\ra\oo^\times$$
 and  define the \emph{tame Stickelberger element}  
$$\theta_f=\sum_{a \in (\ZZ/f\ZZ)^\times\cong\Delta_f} \left(\frac{\langle a \rangle}{f}-\frac{1}{2}\right)\delta	_a^{-1}\in \oo[\Delta_f],$$
so that 
$$\chi(\theta_f)=B_{1,\chi^{-1}}=-L(0,\chi^{-1}),$$ where $B_{1,\chi^{-1}}$ is the generalized Bernoulli number.

Fixing generators $g_\chi$ of $\oo(\chi)$ and $g_{\chi^{-1}}$ of $\oo(\chi^{-1})$, and using the fact that $\chi(p)=1$, we obtain isomorphisms 
$$g_\chi: \,H^i(\QQ_p,T)\stackrel{\sim}{\ra}H^i(\QQ_p,\oo(1))\,\,\,\,\, \hbox{and}\,\,\,\,\,g_{\chi^{-1}}: \,H^i(\QQ_p,T^*)\stackrel{\sim}{\ra}H^i(\QQ_p,\oo)$$
for every $i\geq 0$. We choose $g_\chi$ and $g_{\chi^{-1}}$ so that the following diagram is commutative:
$$
\xymatrix@C=.2in{H^i(\QQ_p,T) \ar[d]_{g_\chi} &\otimes&H^{2-i}(\QQ_p,T^*) \ar[d]_{g_{\chi^{-1}}}\ar[rr]^(.65){\langle\,,\,\rangle_{\textup{Tate}}}  &&\oo\ar@{=}[d]\\
H^i(\QQ_p,\oo(1))&\otimes&H^{2-i}(\QQ_p,\oo)\ar[rr]^(.65){\langle\,,\,\rangle_{\textup{Tate}}}&& \oo
}$$
Via the identifications above, we view $\chi(\theta_f)$ as an element of $\widetilde{H}^1_f(\QQ,T^*)$. 

Let $\langle\,,\,\rangle_{\textup{Nek}}$ be Nekov\'a\v{r}'s height pairing as in~\S\ref{subsubsec:heights} above. We write $\langle\,,\,\rangle_{\textup{Nek}}$ also for the induced pairing
$$\left(\widetilde{H}^1_f(\QQ,T)\otimes\frak{F}\right)\otimes\left(\widetilde{H}^1_f(\QQ,T^*)\otimes\frak{F} \right)\stackrel{\langle\,,\,\rangle_{\textup{Nek}}}{\lra}\frak{F}.$$ 
\begin{thm}
\label{thm:htswhenodd}
$\langle z_0,\chi(\theta_f)\rangle_{\textup{Nek}}=-\al\cdot L(0,\chi^{-1})$.
\end{thm}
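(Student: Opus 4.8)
The strategy is to reduce Theorem~\ref{thm:htswhenodd} to the even-character computation already carried out in \S\ref{sec:calculations}, with two adjustments: the roles of the cyclotomic unit and the dual element are now played by a $p$-unit $z$ generating (a power of) the prime $\wp$ and by the Stickelberger element, and the "derivative" that enters is the $p$-adic $L$-function derivative supplied by the Ferrero--Greenberg/Gross picture rather than by Solomon's construction. First I would record the structural facts from (\ref{eqn:Texplicit})--(\ref{eqn:T*explicit}): $\widetilde{H}^1_f(\QQ,T)$ is free of rank one over $\oo$, generated rationally by $z_0$, and $\widetilde{H}^1_f(\QQ,T^*)\cong H^0(\QQ_p,\oo(\chi))$ is free of rank one. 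So it suffices to compute the single height value $\langle z_0,\chi(\theta_f)\rangle_{\textup{Nek}}$, and by Proposition~\ref{prop:classicalpairing} this equals $\langle\beta^1_\chi(z_0),\chi(\theta_f)\rangle_{\textup{PT}}$.

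Next I would run the same local reduction as in the proof of Proposition~\ref{prop:exp-pairing}: using the commutative diagram (\ref{eqn:pairing}) (whose input — that $H^2(\QQ_\ell,T)=0$ for $\ell\mid\ff_\chi$ — is unchanged in the odd case) together with Lemma~\ref{lem:local pairing-2}, the global Poitou--Tate pairing collapses to the local Tate pairing at $p$, so that $\langle z_0,\chi(\theta_f)\rangle_{\textup{Nek}}=\beta_\chi(z_0)\cdot\chi(\theta_f)(v_0)$, where $\beta_\chi$ is the composite defined at the start of \S\ref{sec:calculations} and $\chi(\theta_f)(v_0)$ is the image of $\chi(\theta_f)$ under the identifications (\ref{eqn:tilde identification}). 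Two ingredients then finish this: (a) the analogue of Proposition~\ref{prop:bockstein calculation} for $z_0$, namely $\beta_\chi(z_0)=\log_p(\iota_p(z_0))=\al$, which follows from the same Bockstein-morphism identification $\beta_\chi(x)=\log_p(\iota_p(x))$ (this is \cite[Proposition~9.3(ii)]{BG} applied to the $p$-unit $z_0$, with the brute-force $\chi$-idempotent bookkeeping of the proof of Proposition~\ref{prop:bockstein calculation} carried over verbatim) — here $\al$ is precisely the $\mathcal{L}$-invariant $\log_p(\iota_\wp(z_0))$ defined just above the theorem; and (b) the evaluation $\chi(\theta_f)(v_0)=\chi(\theta_f)=B_{1,\chi^{-1}}=-L(0,\chi^{-1})$, which is exactly the normalization built into the definition of $\chi(\theta_f)$ as an element of $\widetilde{H}^1_f(\QQ,T^*)$ via the chosen generators $g_\chi,g_{\chi^{-1}}$ and the compatible duality diagram. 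Multiplying (a) and (b) yields $\langle z_0,\chi(\theta_f)\rangle_{\textup{Nek}}=\al\cdot(-L(0,\chi^{-1}))$, which is the claim.

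The main obstacle I anticipate is item (a): one must check that the Bockstein computation of Proposition~\ref{prop:bockstein calculation} genuinely applies to the $p$-unit $z_0$ (which is not a global unit but generates $\wp^h$), i.e. that $z_0\in\widetilde{H}^1_f(\QQ,T)$ under Greenberg's local conditions with $T^+=T$, and that the identification of the Bockstein image with the $p$-adic logarithm of the local image $\iota_\wp(z_0)$ holds without needing norm-coherence in a $\ZZ_p$-tower (in the even case $z_0^\chi$ arose as a universal norm — Solomon's $p$-unit — whereas here $z_0$ is just an arbitrary generator, so one should note that $\beta_\chi$ as a map on cohomology does not require $z_0$ to be a universal norm; only the well-definedness of the $\mathcal{L}$-invariant, already established above the theorem statement, uses the freedom of choice of $\wp$ and $x$). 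A secondary (but routine) check is that replacing $v_0$ by another place $v_0^\delta$ scales $\beta_\chi(z_0)$ by $\chi^{-1}(\delta)$ and $\chi(\theta_f)(v_0)$ by $\chi(\delta)$, so the product — hence the asserted identity — is independent of the choice of place, exactly as in Remark~\ref{rem:indpendentofv0}.
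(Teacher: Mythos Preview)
Your proposal is correct and follows essentially the same route as the paper: reduce via Proposition~\ref{prop:classicalpairing} to $\langle\beta^1_\chi(z_0),\chi(\theta_f)\rangle_{\textup{PT}}$, collapse to the local Tate pairing at $p$, identify the Bockstein image with $\log_p(\iota_\wp(z_0))=\al$, and multiply by $\chi(\theta_f)=-L(0,\chi^{-1})$.

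The one substantive difference is in how the key identity $\beta_\chi(z_0)=\log_p(\iota_\wp(z_0))$ is justified. You invoke \cite[Proposition~9.3(ii)]{BG} and then worry (rightly) whether that result, stated in the paper only for the unit $c_0^\chi$, carries over to the $p$-unit $z_0$. The paper sidesteps this concern entirely: instead of citing BG, it unwinds the Bockstein map directly as the cup product with the tautological class $\frak{c}\in H^1(\QQ,\Gamma)$ and its local restriction $\frak{c}_p$, and then applies two general facts---\cite[11.3.5.3]{nek}, which gives $\textup{inv}_p(\alpha\cup\frak{c}_p)=\frak{c}_p(\frak{a}_p(\alpha))$ for any $\alpha\in H^1(\QQ_p,\oo(1))=\widehat{\QQ_p^\times}$, and \cite[Lemma~II.1.4.5]{katodr}, which gives $\log_p\circ\,\rho_{\textup{cyc}}\circ\,\frak{c}_p(\frak{a}_p(\alpha))=\log_p(\alpha)$. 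These hold for arbitrary $\alpha\in\widehat{\QQ_p^\times}$, so the issue of $z_0$ being a $p$-unit rather than a unit never arises. Your approach works too, but the paper's is cleaner precisely at the point you flagged as an obstacle.
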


\begin{proof}
The statement of this Theorem is equivalent to the assertion that 
\be\label{eqn:reducetoz}\langle z,\chi(\theta_f)\rangle_{\textup{Nek}}=\log_{p}(\iota_\wp(z))\cdot\chi(\theta_f).
\ee
 As we have recalled in~\S\ref{subsubsec:heights}, we have $\langle z_0,\chi(\theta_f)\rangle_{\textup{Nek}}=\langle \beta^1(z_0),\chi(\theta_f)\rangle_{\textup{PT}}$, where 
 $$\beta^1: \widetilde{H}^1_f(\QQ,T)\lra \widetilde{H}^2_f(\QQ,T)\otimes\Gamma$$
  is the Bockstein map which is defined as follows:
 
 For $\frak{s} \in \widetilde{H}^1_f(\QQ,T)$, we define $\beta^1(\frak{s})=\frak{s} \cup \frak{c} \in \widetilde{H}^2_f(\QQ,T\otimes\Gamma)=\widetilde{H}^2_f(\QQ,T)\otimes\Gamma$, where $\frak{c}\in H^1(\QQ,\Gamma)=\textup{Hom}(G_\QQ,\Gamma)$ is the tautological homomorphism $\frak{c}:G_\QQ \ra \Gamma$. One similarly defines 
 $$\beta^1_p: H^1(\QQ_p,T) \lra H^2(\QQ_p,T)\otimes\Gamma$$
 by taking cup product with the element $\frak{c}_p \in H^1(\QQ_p,\Gamma)=\textup{Hom}(G_{\QQ_p},\Gamma)$, which is the restriction of $\frak{c}$ to $G_{\QQ_p}$. We then have the following commutative diagram:
 $$
 \xymatrix@C=.05in{
 \widetilde{H}^1_f(\QQ,T)\ar[d]\ar[rrrrr]^(.48){\beta^1}&&&&&\widetilde{H}^2_f(\QQ,T)\otimes\Gamma \ar[d]&\otimes& \widetilde{H}^1_f(\QQ,T^*)\ar[rrrrr]^(.65){\langle\,,\,\rangle_{\textup{PT}}}&&&&& \Gamma\ar@{=}[d]\ar[rrrr]^{\log_p\circ\,\rho_{\textup{cyc}}}&&&&\oo\\
 H^1(\QQ_p,T)\ar[rrrrr]_(.45){\beta_p^1}&&&&&H^2(\QQ_p,T)\otimes\Gamma&\otimes& H^0(\QQ_p,T^*)\ar[u]^{\cong}\ar[rrrrr]_(.65){\langle\,,\,\rangle_{\textup{Tate}}}&&&&& \Gamma\ar[rrrr]^{\log_p\circ\,\rho_{\textup{cyc}}}&&&&\oo
 }
 $$
 Here, the square on the left is commutative thanks to the description of $\beta^1$ and $\beta^1_p$ above, and the square on the right is commutative by the definition of the Poitou-Tate global pairing as the sum of local invariants, and thanks to the fact that $H^2(\QQ_\ell,T)=0$ for $\ell | \ff_\chi$. The proof of Theorem follows from the following Lemma, whose first part is a restatement of~\cite[11.3.5.3]{nek} and second part is~\cite[Lemma II.1.4.5]{katodr}:
 \begin{lemma}
 \begin{itemize} Suppose $\alpha \in H^1(\QQ_p,\oo(1))=\widehat{\QQ_p^\times}$, and suppose $\frak{a}_p:\widehat{\QQ_p^\times}\ra G_{\QQ_p}^{\textup{ab}}$ is the local reciprocity map as before.
 \item[(i)] $\textup{inv}_p(\beta^1_p(\alpha))=\textup{inv}_p(\alpha \cup \frak{c}_p)=\frak{c}_p(\frak{a}_p(\alpha))$.
 \item[(ii)] $\log_p\circ\,\rho_{\textup{cyc}}\circ\,\frak{c}_p\left(\frak{a}_p(\alpha)\right)=\log_p(\alpha)$.
 \end{itemize}
 \end{lemma}
\end{proof}

 \begin{rem}
 \label{rem:comparewithFG}
 The interpolation property that the $p$-adic $L$-function $L_p(s,\chi^{-1}\omega)$ satisfies (see Definition~\ref{def:KLpadic}), along with our assumption that $\chi(p)=1$ forces the Kubota-Leopoldt $p$-adic $L$-function  $L_p(0,\chi^{-1}\omega)$ to vanish at $s=0$. The theorem of Ferrero-Greenberg~\cite{ferrerogreenberg} combined with a result of Gross and Koblitz~\cite{grosskoblitz} shows that 
 $$\frac{d}{ds}L_p(s,\chi^{-1}\omega)\big{|}_{s=0}=-\al\cdot L(0,\chi^{-1}).$$
 Thus, Theorem~\ref{thm:htswhenodd} implies that
 \be\label{eqn:nekfg}\frac{d}{ds}L_p(s,\chi^{-1}\omega)\big{|}_{s=0}=\langle z_0,\chi(\theta_f)\rangle_{\textup{Nek}}.\ee
 This provides us with a new interpretation of the Ferrero-Greenberg theorem. Of course, it would be desirable to prove \emph{first} a Rubin-style formula (as we did in~\S\ref{sec:main}) in this setting and from that deduce (\ref{eqn:nekfg}) and the Ferrero-Greenberg theorem (as we prove a $p$-adic Kronecker formula from a Rubin-style formula in~\S\ref{subsec:htQeven} above and \S\ref{subsec:caseimagquad} below).
 \end{rem}
 
 \begin{rem}
 \label{rem:totallyrealDDP}
 Suppose in this remark that our base field $K$ is an arbitrary totally real number field and $\chi:G_K\ra\oo^\times$ is a totally odd character which has finite prime-to-$p$ order. Assume further that $\chi(\wp)=1$ for exactly one prime $\wp\subset K$ above $p$. In this setting, Gross conjectured in~\cite{grosspadic} a formula for the leading coefficient $L_p^\prime(0,\chi^{-1}\omega)$ of the Deligne-Ribet $p$-adic $L$-function $L_p(s,\chi^{-1}\omega)$ at $s=0$, and Darmon, Dasgupta and Pollack recently announced a proof of a portion of this conjecture. Using their result, we may express $L_p^\prime(0,\chi^{-1}\omega)$ in terms of of Nekov\'a\v{r}'s heights exactly as we did above for the Kubota-Leopoldt $p$-adic $L$-function when $K=\QQ$. 
 
 On the other hand, if one succeeds in proving a Rubin-style formula in this setting\footnote{It is expected that obtaining a Rubin-style formula for a general totally real $k$ (and for a totally odd character $\chi$) should not be any harder than proving such a formula for $k=\QQ$.}, then one in turn would obtain an alternative proof of Gross' conjecture.
 \end{rem}

\subsection{Height computations for a totally imaginary base field $k$}
\label{subsec:caseimagquad}
We keep the notation from \S\ref{subsubsec:Katz}. Every Dirichlet character $\chi$ of $G_k$ \emph{behaves} like an even character and the results we presented in \S\ref{sec:main} and \S\ref{subsec:htQeven} extend to this case without an extra effort. Replacing the cyclotomic units by elliptic units, and the results of~\cite{BG} by that of~\cite{bley-etnc}; the results of~\cite{sol-wild} by that of~\cite{bley-wild}, one may prove the following formula:
\begin{thm}
\label{thm:mainelliptic} Suppose $\chi(\wp)=1$. 
Then
$$\widetilde{\frak L}_p^\prime(1,\chi)=\langle \frak{e}_{1}^\chi,\frak{col}_0^\chi\rangle_{\textup{Nek}}.$$
\end{thm}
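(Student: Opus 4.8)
The plan is to transport the argument of Sections~\ref{sec:cyclo}--\ref{subsec:htQeven} from the cyclotomic tower over $\QQ$ to the $\ZZ_p$-extension $k_\infty/k$ unramified outside $\wp$, systematically replacing cyclotomic units by elliptic units, the results of \cite{sol-wild} by those of \cite{bley-wild}, and the Bockstein computation of \cite{BG} by that of \cite{bley-etnc}; throughout, $\rho_\Gamma$ with $\Gamma=\textup{Gal}(k_\infty/k)$ plays the role of $\rho_{\textup{cyc}}$. First I would record the input on the Selmer side: since $k$ is totally imaginary the character $\chi$ behaves like an even character, hypothesis $\mathbf{(H)}$ holds for the prime $\wp$, and Corollary~\ref{cor:tilde explicit} applies to give $\widetilde H^1_f(k,T)=(\oo_L[1/\wp]^\times)^\chi$ together with the identification $\widetilde H^1_f(k,T^*)\cong H^0(k_\wp,\oo(\chi))$, free of rank one, and the composite~(\ref{eqn:tilde identification}) sending $\alpha\mapsto\alpha(v_0)\in\oo$. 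As in the cyclotomic case, $(\oo_L[1/\wp]^\times)^\chi$ contains both the tame elliptic unit $\frak{e}_0^\chi$ (the analogue of the tame cyclotomic unit $c_0^\chi$ of Definition~\ref{def:tame cyclotomic unit}) and Bley's wild elliptic $p$-unit $\frak{z}_0^\chi$ (the analogue of Solomon's $z_0^\chi$), with $\{\frak{e}_0^\chi,\frak{z}_0^\chi\}$ an $\frak F$-basis.

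Next I would redo the local height computation. Starting from the wild elliptic units $\frak{w}_\infty^\chi=\{\frak{w}_n^\chi\}$ of Section~\ref{subsubsec:Katz}, their norm down to $L$ vanishes exactly as in Lemma~\ref{lem:vanishing}, since $\chi(\wp)=1$ forces $\wp$ to split completely in $L$; this vanishing is the source of the exceptional zero. The exact-sequence argument of Proposition~\ref{prop:division} then produces a unique $\frak{z}_\infty^\chi$ with $\frac{\gamma-1}{\log_p\rho_\Gamma(\gamma)}\cdot\frak{z}_\infty^\chi=\frak{w}_\infty^\chi$, whose bottom layer is $\frak{z}_0^\chi$. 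The main computation of \cite{bley-wild} identifies $\log_p(\iota_\wp(\frak{e}_0^\chi))$ with $v_0(\frak{z}_0^\chi)$, while \cite{bley-etnc} supplies the elliptic analogue of \cite[Proposition~9.3(ii)]{BG}, namely $\beta_\chi(\frak{e}_0^\chi)=\log_p(\iota_\wp(\frak{e}_0^\chi))$; feeding these into the local-duality bookkeeping of Lemmas~\ref{lem:local pairing-1}--\ref{lem:local pairing-2} and Proposition~\ref{prop:exp-pairing} yields
\[
\langle\frak{e}_0^\chi,\alpha\rangle_{\textup{Nek}}=v_0(\frak{z}_0^\chi)\cdot\alpha(v_0)\qquad\text{for all }\alpha\in\widetilde H^1_f(k,T^*).
\]

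I would then reprove the Rubin-style formula and specialise it. The local Tate-duality maps $\mathcal{L}_{\frak{w}}$, $\mathcal{L}_{\frak{w}}^{\prime}$ of Definition~\ref{def:maps} carry over with $\QQ_{n,p}$ replaced by the completion $(k_n)_\wp$, Lemma~\ref{lifting} holds by the same cohomological-dimension argument, so any $\phi_\alpha\in H^1(k_\wp,T^*)$ admits a norm-compatible lift $\Phi=\{\phi_\alpha^{(n)}\}$; attaching to $\frak{w}_\infty^\chi$ and $\Phi$ the measure of Definition~\ref{measure}, the proofs of Theorem~\ref{main-primitive}, Lemma~\ref{lem3.1} and Proposition~\ref{prop:main} go through verbatim to give $\langle\frak{e}_0^\chi,\alpha\rangle_{\textup{Nek}}=\mathcal{L}_{\frak{w}}^{\prime}(\phi_\alpha)=L_{\frak{w},\Phi}^{\prime}(\pmb{1})$. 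Using de Shalit's Coleman map for elliptic units (\cite[\S I.3.5]{deshalit}) in place of the cyclotomic one---which is legitimate because $\chi$ is unramified at $\wp$ (as $\frak{f}_L$ is prime to $p$), so that Coleman's map extends to $\mathcal V^\chi$ and its normalisation $\widetilde{\frak{col}}_\infty^\chi$ lands in $\LL$---I would construct $\frak{col}_0^\chi\in\widetilde H^1_f(k,T^*)$ exactly as in Section~\ref{subsec:htQeven}, via $\frak{b}(\phi^{(n)})=\widetilde{\frak{col}}_n^\chi$, the analogue of Lemma~\ref{lem:extendphis}, and $\alpha(v_0)=\widetilde{\frak{col}}_0^\chi(\varpi_{v_0})$. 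For this particular $\Phi$ the associated power series is $\frak{P}_{\frak{w},\Phi}=\widetilde{\frak{col}}_\infty^\chi(\frak{w}_\infty^\chi)=\frac{\gamma-1}{\log_p\rho_\Gamma(\gamma)}\cdot\al_\chi|_\Gamma$ by the Coates--Wiles interpolation~(\ref{eqn:ellipticcolemanimage}), and differentiating along $\rho_\Gamma$ as in Remark~\ref{rem:observationabuse} and in the proof of Theorem~\ref{main:K-L} gives $L_{\frak{w},\Phi}^{\prime}(\pmb{1})=\widetilde{\frak L}_p^{\prime}(1,\chi)$; combining this with the displayed identity at $\alpha=\frak{col}_0^\chi$ proves the theorem.

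The step I expect to be the real obstacle is not any of these largely formal translations but the verification that Bley's elliptic-unit constructions in \cite{bley-wild} and \cite{bley-etnc} are normalised compatibly with de Shalit's Coleman map and with the $\ZZ_p$-extension $k_\infty/k$ used here, so that the three load-bearing identities $\log_p(\iota_\wp(\frak{e}_0^\chi))=v_0(\frak{z}_0^\chi)$, $\beta_\chi(\frak{e}_0^\chi)=\log_p(\iota_\wp(\frak{e}_0^\chi))$ and $\widetilde{\frak{col}}_\infty^\chi(\frak{w}_\infty^\chi)=\frac{\gamma-1}{\log_p\rho_\Gamma(\gamma)}\al_\chi|_\Gamma$ hold on the nose, with matching signs, periods and choices of units. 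Here the standing hypotheses $p\nmid h_k$ and the injectivity of $\oo_k^\times\to(\oo_k/\frak{f})^\times$ are precisely what is needed for the relevant elliptic units to be well defined and norm-coherent, and one should in particular check that the vanishing of the norm of $\frak{w}_\infty^\chi$ down to $L$---the analogue of Lemma~\ref{lem:vanishing}, which is what keeps the whole construction non-vacuous---indeed follows from $\chi(\wp)=1$ in Bley's normalisation.
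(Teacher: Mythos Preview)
Your proposal is correct and follows precisely the approach the paper itself takes: the paper does not give a detailed proof of Theorem~\ref{thm:mainelliptic} but only the sentence preceding it, which instructs the reader to repeat the arguments of \S\ref{sec:main} and \S\ref{subsec:htQeven} with cyclotomic units replaced by elliptic units, \cite{BG} by \cite{bley-etnc}, and \cite{sol-wild} by \cite{bley-wild}. Your write-up is in fact a more careful execution of this sketch than the paper provides, including the compatibility caveats in your final paragraph, which the paper leaves entirely implicit.
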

Here we follow the notation from~\S\ref{subsubsec:Katz}. Namely, 
\begin{itemize}
\item $\frak{e}_{1}$ is the (tame) elliptic unit which is denoted by $\mathbf{N}_{k(\frak{f})/L}\psi(1, \frak{f},\frak{a})$ in~\cite{bley-wild} and $\frak{e}_1^\chi \in \widetilde{H}^1_f(k,T)=H^1_{\FFc}(k,T)=L^{\times,\chi}$ is the $\chi$ part of $\frak{e}_{1}$,
\item $\frak{col}_0^\chi \in \widetilde{H}^1_f(k,T^*)$ is the element which is obtained from the Coleman map (as in \S\ref{subsec:htQeven}),
\item $\widetilde{\frak L}_p(s,\chi)=\frac{\rho_{\textup{cyc}}^{1-s}(\gamma)-1}{\frac{1}{p}\log_p\rho_{\textup{cyc}(\gamma)}}\cdot \frak{L}_p(s,\chi)$ is the imprimitive (one-variable) Katz $p$-adic $L$-function, where $\frak{L}_p(s,\chi)$ is the restriction of the two-variable $p$-adic $L$-function to $\Gamma$.
\end{itemize}
 
 \begin{rem}
 \label{rem:ellipticcurvesKurihara}
 Suppose $E_{/\QQ}$ is an elliptic curve and only in this remark, let $T=T_p(E)$ be the $p$-adic Tate-module of $E$. Let $L_p(E,s)$ denote the Mazur-Tate-Teitelbaum  $p$-adic $L$-function attached to $E$. Assume that $E$ has split multiplicative reduction at $p$. In this case, $L_p(E,s)$ has an exceptional zero at $s=1$ which is forced by the interpolation property. The Mazur-Tate-Teitelbaum conjecture (now a theorem of Greenberg and Stevens~\cite{grste}) asserts that 
 \be\label{eqn:mttconj}
 \frac{d}{ds}L_p(E,s)\big{|}_{s=1}=\al_E\cdot \frac{L(E,1)}{\Omega_E^+}
 \ee
 where $\al_E$ is the $\al$-invariant, $L(E,1)$ is the value of the Hasse-Weil $L$-function at $s=1$ and $\Omega_E^+$ is the real period of $E$. 
 
  Let 
  $$\langle\,,\,\rangle_{\textup{Tate}}:\,\, H^1(\QQ_p,T)\otimes H^1(\QQ_p,T^*)\lra \ZZ_p$$
  denote Tate's local cup-product pairing.
  M. Kurihara has kindly explained us how one may interpret the quantity on the right in (\ref{eqn:mttconj}) as the local Tate pairing calculated on Kato's zeta-element $\mathcal{Z}_0\in H^1(\QQ_p,T)$ and another special element $\alpha \in H^1(\QQ_p,T^*)$ (which we do not define here). Using this observation, Kurihara was able to give another proof of the Mazur-Tate-Teitelbaum conjecture (\ref{eqn:mttconj}). 
  
If one succeeds in proving a \emph{Rubin-style formula} in this setting, one could \emph{globalize} Kurihara's calculation with Kato's zeta-element $\mathcal{Z}_0$ and the element $\alpha$, so as to obtain a $p$-adic Gross-Zagier formula in the presence of an exceptional zero (i.e., relate Nekov\'a\v{r}'s height pairing to the Mazur-Tate-Teitelbaum $p$-adic $L$-function via \begin{enumerate} \item a \emph{Rubin-style formula} to relate heights to local Tate pairing, \item then using Kurihara's local calculation),\end{enumerate} much in the spirit of~\cite{bertdarmonexceptional1, bertdarmonexceptional2}.
 \end{rem}
\bibliographystyle{alpha}
\bibliography{heights}
\end{document}